\newcommand{\ii}{\operatorname{i}}
\newcommand{\ee}{\operatorname{e}}
\newcommand{\dd}{\mathrm{d}}
\newcommand{\Comma}{\: ,}
\newcommand{\period}{\: .}
\newcommand{\semicolon}{\: ;}
\newcommand{\Ai}{\operatorname{Ai}}
\newcommand{\Bi}{\operatorname{Bi}}
\newcommand{\MATLAB}{\textsc{Matlab}\xspace}
\newcommand{\anton}{\textcolor{black}}
\newcommand{\jannis}{\textcolor{black}}
\newcommand{\markus}{\textcolor{black}}
\newtheorem{Theorem}{Theorem}[section]
\newtheorem{Remark}[Theorem]{Remark}
\newtheorem{Lemma}[Theorem]{Lemma}
\newtheorem{Corollary}[Theorem]{Corollary}
\newtheorem{Proposition}[Theorem]{Proposition}
\newenvironment{Hypothesis}[1]
  {\innercustomthm}
  {\endinnercustomthm}
\title{\vspace{-2.5cm}Optimally truncated WKB approximation for the 1D stationary Schrödinger equation in the highly oscillatory regime}
 \author{
    Anton Arnold\footremember{label1}{Corresponding author.}\textsuperscript{,}\footremember{alley}{Institute of Analysis and Scientific              Computing, Technische Universität Wien, Wiedner Hauptstr. 8-10, A--1040 Wien, Austria,\\
    \href{mailto:anton.arnold@tuwien.ac.at}{anton.arnold@tuwien.ac.at}, \href{mailto:jannis.koerner@tuwien.ac.at}{jannis.koerner@tuwien.ac.at}, \href{mailto:melenk@tuwien.ac.at}{melenk@tuwien.ac.at}}
    \and
    Christian Klein\footremember{trailer}{Institut de Math\'{e}matiques de             Bourgogne, Institut Universitaire de France, Universit\'{e} de Bourgogne, 9 avenue Alain           Savary, \markus{F--21078 Dijon},  France, \href{mailto:Christian.Klein@u-bourgogne.fr}{Christian.Klein@u-bourgogne.fr}}
    \and
    Jannis Körner\footrecall{alley}
    \and
    Jens Markus Melenk\footrecall{alley} 
}
\date{\vspace{-2em}}
\begin{document}

\setcounter{page}{1} \thispagestyle{empty}

	

\newcommand{\footremember}[2]{%
    \footnote{#2}
    \newcounter{#1}
    \setcounter{#1}{\value{footnote}}%
}
\newcommand{\footrecall}[1]{%
    \footnotemark[\value{#1}]%
}

\maketitle

%
\begin{abstract}
	This paper is dedicated to the efficient numerical computation of solutions to the 1D stationary Schrödinger equation in the highly oscillatory regime. 
    We compute an approximate solution based on the well-known WKB-ansatz, which relies on an asymptotic expansion w.r.t.\ the small parameter $\varepsilon$. Assuming that the coefficient in the equation is analytic, we derive an explicit error estimate for the truncated WKB series, in terms of $\varepsilon$ and the truncation order $N$. For any fixed $\varepsilon$, this allows to determine the optimal truncation order $N_{opt}$ which turns out to be proportional to $\varepsilon^{-1}$. 
    When chosen this way, the resulting error of the \emph{optimally truncated WKB series} behaves like 
    \anton{$\mathcal{O}(\exp(-r/\varepsilon))$,  }
    with some parameter $r>0$. The theoretical results established in this paper are confirmed by several numerical examples.
\end{abstract}



%
\noindent\textbf{Key words:} Schrödinger equation, highly oscillatory wave functions, higher order WKB approximation, optimal truncation, asymptotic analysis, Airy function, spectral methods

\noindent\textbf{AMS subject classifications:} 34E20, 81Q20, 65L11, 65M70
\section{Introduction}
In this paper we are concerned with the numerical solution of the 
stationary 1D Schrödinger equation
\begin{align}\label{schroedinger_eq}
	\begin{cases}
		\varepsilon^2\varphi''(x)+a(x)\varphi(x)=0\Comma \quad x\in I:=[\xi,\eta]\Comma\\
		\varphi(\xi)=\varphi_{0}\Comma\\
		\varepsilon\varphi'(\xi)=\varphi_{1}\Comma
	\end{cases}	
\end{align}
\anton{which yields highly oscillatory solutions.}
Here, $0<\varepsilon\ll 1$ is a very small parameter and $a$ is a real-valued function satisfying $a(x)\geq a_{0} > 0$ and, for a quantum mechanical problem, it is related to the potential. The constants $\varphi_{0},\varphi_{1}\in\mathbb{C}$ may depend on $\varepsilon$ but are assumed to be ${\varepsilon}$-uniformly bounded. It is known that the (local) \anton{wavelength} $\lambda$ of the solution $\varphi$ to (\ref{schroedinger_eq}) is proportional to $\varepsilon$. More precisely, it can be expressed as $\lambda(x)=(2\pi\varepsilon)/\sqrt{a(x)}$. Consequently, for a small parameter $\varepsilon$ the solution becomes highly oscillatory, particularly in the semi-classical limit $\varepsilon \to 0$.

Highly oscillatory problems such as (\ref{schroedinger_eq}) occur across a broad range of applications, e.g., plasma physics \cite{Courant1958TheoryOT,Lewis1968MOTIONOA}, inflationary cosmology \cite{Martin2003WKBAF,Winitzki2005CosmologicalPP} and electron transport in semiconductor devices such as resonant tunneling diodes \cite{Mennemann2013TransientSS,Sun1998ResonantTD,Negulescu2005PHD}. 
More specifically, the state of an electron of mass $m$ that is injected with the prescribed energy $E$ from the right boundary into an electronic device (e.g., diode), modeled on the interval $[\xi, \eta]$, can be described by the following boundary value problem (BVP) (e.g., see \cite{Arnold2011WKBBasedSF} or \cite[Chap.\ 2]{Negulescu2005PHD}):
\begin{align}\label{schroedinger_BVP}
    \begin{cases}
		-\varepsilon^2\psi_{E}''(x)+V(x)\psi_{E}(x)=E\psi_{E}(x)\Comma \quad x\in (\xi,\eta)\Comma\\
		\psi_{E}'(\xi)+\ii k(\xi)\anton{\psi_{E}(\xi)}=0\Comma\\
		\psi_{E}'(\eta)-\ii k(\eta)\anton{\psi_{E}(\eta)}=-2\ii k(\eta)\period
	\end{cases}
\end{align}
Here, $\varepsilon:=\hbar/\sqrt{2m}$ is proportional to the (reduced) Planck constant $\hbar$, $k(x):=\varepsilon^{-1}\sqrt{E-V(x)}$ is the \anton{wavevector} and the real-valued function $V$ denotes the electrostatic potential. In the context of (\ref{schroedinger_BVP}), our assumption $a(x)\geq a_{0}>0$ simply reads $E>V(x)$, which means that we are in the oscillatory regime. One is then often interested in macroscopic quantities such as the charge density $n$ and the current density $j$, which are given by
\begin{align}\label{densities}
    n(x)=\int_{0}^{\infty}|\psi_{E}(x)|^{2}f(E)\,\mathrm{d}E\Comma \quad j(x)=\varepsilon\int_{0}^{\infty}\operatorname{Im}(\overline{\psi_{E}(x)}\psi_{E}'(x))f(E)\,\mathrm{d}E\period
\end{align}
Here, $f$ is the distribution function which represents the injection statistics of the electron and $\operatorname{Im}(\cdot)$ denotes the imaginary part. Thus, in order to compute the quantities (\ref{densities}), one has to use a very fine grid in $E$ which means that the BVP (\ref{schroedinger_BVP}) has to be solved many times. Consequently, there exists a substantial demand for efficient numerical methods that are suitable for solving problems like (\ref{schroedinger_BVP}). Further, we note that the BVP (\ref{schroedinger_BVP}) is strongly connected to IVP (\ref{schroedinger_eq}). Indeed, for suitable initial values, namely, $\varphi_{0}=1$ and $\varphi_{1}=-\ii\sqrt{a(\xi)}$, the solution $\varphi$ of IVP (\ref{schroedinger_eq}) and the solution $\psi_{E}$ of BVP (\ref{schroedinger_BVP}) are related by
\begin{align}
    \psi_{E}(x)=-\frac{2\ii k(\eta)}{\varphi'(\eta)-\ii k(\eta)\varphi(\eta)}\varphi(x)\period
\end{align}
Thus, any numerical method for solving IVP (\ref{schroedinger_eq}) is also suitable for the numerical treatment of BVP (\ref{schroedinger_BVP}).
\subsection{Background and approach}
Since the solution $\varphi$ to (\ref{schroedinger_eq}) exhibits rapid oscillations when $\varepsilon$ is small, standard numerical methods for ODEs become inefficient as they are typically constrained by grid limitations $h=\mathcal{O}(\varepsilon)$ ($h$ denoting the step size), in order to resolve the oscillations accurately. By contrast,
\anton{the \emph{phase function method} of \cite{Bremer2018} is based on the observation that solutions to \eqref{schroedinger_eq} can be represented accurately by means of a nonoscillatory phase function. Our approach presented below is closer to the \textit{uniformly accurate} (w.r.t.\ $\varepsilon$) marching methods of  \cite{Lorenz2005AdiabaticIF,Jahnke2003NumericalIF} } 
which yield global errors of order $\mathcal{O}(h^{2})$ and allow to reduce the grid limitation to at least $h=\mathcal{O}(\sqrt{\varepsilon})$.
The WKB-based (named after the physicists Wentzel, Kramers, Brillouin; cf. \cite{Landau1985Q}) one-step method from \cite{Arnold2011WKBBasedSF} is even \textit{asymptotically correct}, i.e.\ the numerical error goes to zero with $\varepsilon\to 0$, provided that the integrals $\int^{x}\sqrt{a(\tau)}\,\mathrm{d}\tau$ and $\int^{x}a(\tau)^{-1/4}(a(\tau)^{-1/4})''\,\mathrm{d}\tau$ for the phase of the solution can be computed exactly. More precisely, the method then yields an error which is of order $\mathcal{O}(\varepsilon^{3})$ as $\varepsilon\to 0$ and $\mathcal{O}(h^{2})$ as $h\to 0$. If these integrals cannot be evaluated exactly, the asymptotically correct error behavior can be (almost) recovered by employing spectral methods for the integrals, as shown in \cite{Arnold2022WKBmethodFT}.
Further, in \cite{Agocs2022AnAS} the authors propose a numerical algorithm that switches adaptively between a defect correction iteration (which builds on an asymptotic expansion) for oscillatory regions of the solution, and a conventional Chebyshev collocation solver for smoother regions. Although the method is demonstrated to be highly accurate and efficient, a full error analysis was left for future work.

Our approach here is to implement directly a WKB approximation for the solution of (\ref{schroedinger_eq}), which is asymptotically correct and of arbitrary order w.r.t.\ $\varepsilon$. The essence of the method is rather an analytic approximation via an asymptotic WKB series with optimal truncation. As such, the main goal is to understand the asymptotic $\varepsilon$-dependence of this truncation strategy and of the resulting error. Thus our strategy is not a classical numerical method with some chosen grid size $h$ and convergence as $h\to 0$. Instead, the resulting approximation error will be of order $\mathcal{O}(\varepsilon^{N})$ as $\varepsilon\to 0$, where $N$ refers to the used truncation order in the underlying asymptotic WKB series, see (\ref{WKB-ansatz})-(\ref{wkb_S}) below. As $N$ can be chosen freely, this approach may prove very practical for applications, especially when the model parameter $\varepsilon$ is very small. Since the computation of the terms of the asymptotic series involves several integrals, we will employ highly accurate spectral methods, as already proven useful in \cite{Arnold2022WKBmethodFT}.

The key question when implementing this WKB approximation is which choice of $N$ is adequate or even optimal, in the sense of minimizing the resulting approximation error. Indeed, since the asymptotic WKB series is typically divergent, the error cannot simply be reduced further by increasing the value of $N$. This question about the best attainable accuracy of the WKB approximation was already addressed in \cite{Winitzki2005CosmologicalPP}, where the author compared the WKB series with the exact solution represented by a convergent \anton{Bremmer series \cite{Bremmer1951, Atkinson1960JMAA},} or more precisely, by an asymptotic expansion of that Bremmer series. The author finds that in cases where the coefficient function $a$ is analytic, the optimal truncation order is proportional to $\varepsilon^{-1}$, yielding a corresponding optimal accuracy which is exponentially small w.r.t.\ $\varepsilon$. However, to derive these results, the author makes several additional asymptotic approximations.
In the present paper, on the other hand, we shall follow a more rigorous strategy by providing error estimates for the WKB approximation which are explicit w.r.t.\ $\varepsilon$ \textit{and} $N$. 
We note, however, that the key assumption from \cite{Winitzki2005CosmologicalPP}, i.e., $a$ being analytic, will also be crucial for the strategy of the present work. 

\anton{In practical finite precision computations, 
optimal truncation is not generally needed since it is not useful to add additional terms after reaching machine precision. In this paper we present concrete a-priori estimates for this truncation. }
\subsection{Main results}
%
Our first main result is Theorem~\ref{theorem_wkb_error}, which provides an explicit (w.r.t.\ $\varepsilon$ and $N$) error estimate for the WKB approximation, and implies that the approximation error is of order $\mathcal{O}(\varepsilon^{N})$. The explicitness of this estimate then allows the investigation of the error w.r.t.\ the truncation order $N$. Indeed, the optimal truncation order $N_{opt}$ can be predicted by minimizing the established upper error bound w.r.t.\ $N$ or by determining the smallest term of the asymptotic series, and is found to be proportional to $\varepsilon^{-1}$. This leads to our second main result, namely, Corollary~\ref{corollary_exp_small_error}. It states that, for an adequate choice of $N=N(\varepsilon)\sim \varepsilon^{-1}$, the error of the WKB approximation is of order 
\anton{$\mathcal{O}(\exp(-r/\varepsilon))$, }
$r>0$ being some constant. As a consequence, also the error of the optimally truncated WKB approximation is of order 
\anton{$\mathcal{O}(\exp(-r/\varepsilon))$, }
see also Remark~\ref{remark_optimal_error}.

This paper is organized as follows: In Section~\ref{section:wkb_approximation} we introduce the $N$-th order (w.r.t.\ $\varepsilon$) WKB approximation as an approximate solution of IVP (\ref{schroedinger_eq}). Section~\ref{section_error_analysis} then contains a detailed error analysis for the WKB approximation and includes explicit (w.r.t.\ $\varepsilon$ and the truncation order $N$) error estimates. In Section~\ref{section_computation_of_the_WKB_approximation} we specify the computation of the WKB approximation. This includes the description of the chosen methods for the computation of the terms of the underlying asymptotic series as well as a reasonable truncation strategy. In Section~\ref{section_numerical_simulations} we illustrate the theoretical results established in this paper by several numerical examples. We conclude in Section~\ref{section_conclusion}.

\section{WKB approximation}\label{section:wkb_approximation}
In this section we introduce the \textit{WKB approximation} as an approximate solution of IVP (\ref{schroedinger_eq}). The basis for its construction is the well-known WKB-ansatz (cf.\ \cite{Bender1999AdvancedMM,Landau1985Q}), which for the ODE (\ref{schroedinger_eq}) reads\footnote{We say that two functions $f,g:I\times(0,\varepsilon_{0})\to \mathbb{C}$ are \textit{asymptotically equivalent} as $\varepsilon\to 0$, if and only if for any $x\in I$ it holds $f(x,\varepsilon)-g(x,\varepsilon)=o(g(x,\varepsilon))$ as $\varepsilon\to 0$. In this case we write $f(x,\varepsilon)\sim g(x,\varepsilon)$, $\varepsilon\to 0$.}
\begin{align}\label{WKB-ansatz}
	\varphi(x)\sim \exp\left(\frac{1}{\varepsilon}S(x)\right)\Comma\quad \varepsilon \to 0\Comma	
\end{align}
where $S$ is a complex-valued function containing information of the phase as well as the amplitude of the solution $\varphi$. To derive WKB approximations it is then convenient to express $S$ as an asymptotic expansion\footnote{We say that a function $f:I\times(0,\varepsilon_{0})\to\mathbb{C}$ has an \textit{asymptotic expansion} as $\varepsilon\to 0$, if and only if there exist sequences of functions $\left(f_{n}:I\to\mathbb{C}\right)_{n\in\mathbb{N}_{0}}$ and ($\phi_{n}:(0,\varepsilon_{0})\to\mathbb{C})_{n\in\mathbb{N}_{0}}$ satisfying for all $n\in\mathbb{N}_{0}$ and $x\in I$ that $\phi_{n+1}(\varepsilon)f_{n+1}(x)=o(\phi_{n}(\varepsilon)f_{n}(x))$ as $\varepsilon\to 0$, such that for all $N\geq 0$ it holds $f(x,\varepsilon)-\sum_{n=0}^{N}\phi_{n}(\varepsilon)f_{n}(x)=o(\phi_{N}(\varepsilon)f_{N}(x))$ as $\varepsilon\to 0$. In this case we write $f(x,\varepsilon)\sim\sum_{n=0}^{\infty}\phi_{n}(\varepsilon)f_{n}(x)$, $\varepsilon\to 0$. We call an asymptotic expansion \textit{uniform} w.r.t.\ $x\in I$, if all the order symbols hold uniformly in $x\in I$.} w.r.t.\ the small parameter $\varepsilon$: 
\begin{align}\label{wkb_S}
	S(x)&\sim \sum_{n=0}^{\infty}\varepsilon^{n}S_{n}(x)\Comma\quad \varepsilon \to 0\,;\quad S_{n}(x)\in\mathbb{C}\period
\end{align}
It should be noted that this asymptotic series is typically divergent (as usual for asymptotic series) and must therefore be truncated in order to obtain an approximate solution.

By substituting the ansatz (\ref{WKB-ansatz})-(\ref{wkb_S}) into (\ref{schroedinger_eq}), one obtains (formally)
\begin{align}
	\left(\sum_{n=0}^{\infty}\varepsilon^{n}S_{n}^{\prime}(x)\right)^{2}+\sum_{n=0}^{\infty}\varepsilon^{n+1}S_{n}^{\prime\prime}(x)+a(x)=0.
\end{align}
A comparison of $\varepsilon$-powers then yields the following well-known recurrence relation for the functions $S_{n}'$:
\begin{align}
	S_{0}^{\prime}&=\pm \ii \sqrt{a}\Comma\label{S0}\\
	S_{1}^{\prime}&=-\frac{S_{0}^{\prime\prime}}{2S_{0}^{\prime}}=-\frac{a^{\prime}}{4a}=-\frac{1}{4}(\ln(a))'\Comma\label{S1}\\
	S_{n}^{\prime}&=-\frac{1}{2S_{0}^{\prime}}\left(\sum_{j=1}^{n-1}S_{j}^{\prime}S_{n-j}^{\prime}+S_{n-1}^{\prime\prime}\right)\Comma\quad n\geq 2\label{Sn}\period
\end{align}
The computation of each $S_{n}$, $n\geq 0$, thus involves one integration constant. Further, the repeated differentiation in (\ref{Sn}) indicates that a WKB approximation relying on $N+1$ terms in the truncated series (\ref{wkb_S}) requires $a\in C^{N}(I)$. Moreover, the fact that the r.h.s.\ of (\ref{S0}) has two different signs implies that there are two sequences of functions, which solve (\ref{S0})-(\ref{Sn}). This corresponds to the fact that there are two fundamental solutions of the ODE in (\ref{schroedinger_eq}). Let us denote by $(S_{n}^{-})_{n\in\mathbb{N}_{0}}$ the sequence induced by the choice $S_{0}^{\prime}=- \ii \sqrt{a}$. The one following from $S_{0}^{\prime}= \ii \sqrt{a}$ will be denoted by $(S_{n}^{+})_{n\in\mathbb{N}_{0}}$. Then, a simple observation is the following proposition.
\begin{Proposition}\label{proposition_S_and_S_tilde}
	\begin{align}
		(S_{2n}^{-})^{\prime}(x)&=-(S_{2n}^{+})^{\prime}(x)\in\ii\mathbb{R}\Comma \label{proposition_S_and_S_tilde_eq1}\\ (S_{2n+1}^{-})^{\prime}(x)&=(S_{2n+1}^{+})^{\prime}(x)\in\mathbb{R}\label{proposition_S_and_S_tilde_eq2}\Comma
	\end{align}
	for all $x\in I$ and $n\geq 0$.
\end{Proposition}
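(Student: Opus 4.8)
The plan is to establish both identities simultaneously by strong induction on $n$, reading everything off the recurrence \eqref{S0}--\eqref{Sn}. Two elementary facts drive the argument. First, since $a$ is real and positive, $\sqrt a$ is real, so the seed $(S_0^+)'=\ii\sqrt a$ is purely imaginary and $(S_0^-)'=-\ii\sqrt a=-(S_0^+)'$. Second, the common prefactor in \eqref{Sn} satisfies $\tfrac{1}{2(S_0^-)'}=-\tfrac{1}{2(S_0^+)'}$, and for the $+$-branch it equals $-\tfrac{1}{2(S_0^+)'}=\tfrac{\ii}{2\sqrt a}$, which is purely imaginary. For the base cases, $n=0$ is the seed, while \eqref{S1} shows $(S_1^\pm)'=-\tfrac14(\ln a)'$ is real and \emph{independent} of the sign choice, so $(S_1^-)'=(S_1^+)'\in\mathbb R$.

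For the inductive step I would carry the hypothesis that for every index $m<n$ one has $(S_m^-)'=(-1)^{m+1}(S_m^+)'$, together with the reality type ``$(S_m^+)'$ purely imaginary for even $m$, real for odd $m$'' (and likewise for $S_m^-$). To obtain the sign identity \eqref{proposition_S_and_S_tilde_eq1}--\eqref{proposition_S_and_S_tilde_eq2} I insert the hypothesis into \eqref{Sn}: each convolution term obeys $(S_j^-)'(S_{n-j}^-)'=(-1)^{j+1}(-1)^{n-j+1}(S_j^+)'(S_{n-j}^+)'=(-1)^n(S_j^+)'(S_{n-j}^+)'$, and differentiation gives $(S_{n-1}^-)''=(-1)^n(S_{n-1}^+)''$, so the whole bracket picks up the factor $(-1)^n$. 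Since in addition $\tfrac{1}{2(S_0^-)'}=-\tfrac{1}{2(S_0^+)'}$, comparison with the $+$-recurrence yields $(S_n^-)'=(-1)^{n+1}(S_n^+)'$, which is precisely \eqref{proposition_S_and_S_tilde_eq1} for even $n$ and \eqref{proposition_S_and_S_tilde_eq2} for odd $n$.

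For the reality type I would track real-versus-imaginary through \eqref{Sn} by parity bookkeeping. A product $(S_j^+)'(S_{n-j}^+)'$ is real exactly when $j$ and $n-j$ have the same parity, i.e.\ exactly when $n$ is even, and purely imaginary when $n$ is odd; the same rule applies to $(S_{n-1}^+)''$ via the parity of $n-1$. Hence the bracket is real for even $n$ and purely imaginary for odd $n$. Multiplying by the purely imaginary prefactor $\tfrac{\ii}{2\sqrt a}$ flips the type, so $(S_n^+)'$ is purely imaginary for even $n$ and real for odd $n$; the sign identity just proved then transfers the same conclusion to $(S_n^-)'$, closing the induction.

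The computation is elementary, so the only genuine obstacle is bookkeeping: one must use \emph{strong} induction, since $(S_n)'$ in \eqref{Sn} depends on all lower-order terms through the convolution, and one must keep the two assertions coupled, because both the reality type of the bracket and the sign factor hinge on the parity of $n$. As an optional streamlining I note that $(S_n^-)'=(-1)^{n+1}(S_n^+)'$ is equivalent to $(S_n^-)'=\overline{(S_n^+)'}$ once the reality type is known: as $a$ is real, complex conjugation commutes with real-variable differentiation and with the products in \eqref{Sn} and sends $(S_0^+)'$ to $(S_0^-)'$, so $(\overline{(S_n^+)'})_n$ solves the same recurrence as $((S_n^-)')_n$; uniqueness gives $(S_n^-)'=\overline{(S_n^+)'}$, from which \eqref{proposition_S_and_S_tilde_eq1}--\eqref{proposition_S_and_S_tilde_eq2} follow by inserting the reality type.
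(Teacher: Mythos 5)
Your proof is correct and takes essentially the same approach as the paper: the paper's own proof is just the one-line remark that the statement ``can easily be verified by induction on $n\in\mathbb{N}_{0}$,'' and your strong induction with the coupled sign identity $(S_n^-)'=(-1)^{n+1}(S_n^+)'$ and the parity/reality bookkeeping is precisely that induction, written out in full. The conjugation observation $(S_n^-)'=\overline{(S_n^+)'}$ is a nice optional shortcut, but it does not change the underlying argument.
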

\begin{proof}
	The statement can easily be verified by induction on $n\in\mathbb{N}_{0}$.
\end{proof}
Since both sequences $(S_{n}^{\pm})_{n\in\mathbb{N}_{0}}$ lead to an approximate solution of the ODE in (\ref{schroedinger_eq}), the general approximate solution can be written as the linear combination
\begin{align}\label{general_wkb_solution}
\varphi\approx\varphi_{N}^{WKB}:=\alpha_{N,\varepsilon}\exp\left(\sum_{n=0}^{N}\varepsilon^{n-1}S_{n}^{-}\right)+\beta_{N,\varepsilon}\exp\left(\sum_{n=0}^{N}\varepsilon^{n-1}S_{n}^{+}\right)\Comma
\end{align}
with arbitrary $\alpha_{N,\varepsilon},\beta_{N,\varepsilon}\in\mathbb{C}$. Note that all integration constants in the computation of $S_{n}^{-}$ and $S_{n}^{+}$ can be ``absorbed'' into $\alpha_{N,\varepsilon}$ and $\beta_{N,\varepsilon}$, respectively. Hence, these integration constants can be set
to zero without loss of generality.
More precisely, we define
\begin{align}\label{Sn_definition}
	S_{n}^{\pm}(x):=\int_{\xi}^{x}\left(S_{n}^{\pm}\right)'(\tau)\,\mathrm{d}\tau\period
\end{align}
With this, Proposition~\ref{proposition_S_and_S_tilde} implies
\begin{align}
		S_{2n}^{-}(x)&=-S_{2n}^{+}(x)\in\ii\mathbb{R}\Comma \label{proposition_S_and_S_tilde_implication1}\\ S_{2n+1}^{-}(x)&=S_{2n+1}^{+}(x)\in\mathbb{R}\Comma\label{proposition_S_and_S_tilde_implication2}
\end{align}
for all $x\in I$ and $n\geq 0$. Hence, functions with even indices only contribute to the phase of the WKB approximation $\varphi_{N}^{WKB}$, whereas functions with odd indices only provide corrections to the amplitude.

Note that in general the constants $\alpha_{N,\varepsilon}$ and $\beta_{N,\varepsilon}$ can be uniquely determined by initial or boundary conditions. Here, for the WKB approximation (\ref{general_wkb_solution}) to satisfy the initial conditions in (\ref{schroedinger_eq}),
%
we set
\begin{align}
	\alpha_{N,\varepsilon}&=\frac{\varphi_{0}\left(\sum_{n=0}^{N}\varepsilon^{n}(S_{n}^{+})^{\prime}(\xi)\right)-\varphi_{1}}{\sum_{n=0}^{N}\varepsilon^{n}\left((S_{n}^{+})^{\prime}(\xi)-(S_{n}^{-})^{\prime}(\xi)\right)}\Comma\label{alpha_N_eps}\\
	\beta_{N,\varepsilon}&=\frac{\varphi_{1}-\varphi_{0}\left(\sum_{n=0}^{N}\varepsilon^{n}(S_{n}^{-})^{\prime}(\xi)\right)}{\sum_{n=0}^{N}\varepsilon^{n}\left((S_{n}^{+})^{\prime}(\xi)-(S_{n}^{-})^{\prime}(\xi)\right)}\period\label{beta_N_eps}
\end{align}
In the following we will often simply write $S_n$ whenever one could insert either $S_{n}^{-}$ or $S_{n}^{+}$.

According to \cite[Sec.\ 10.2]{Bender1999AdvancedMM}, for the WKB-ansatz (\ref{WKB-ansatz})-(\ref{wkb_S}) to be valid on the whole interval $I$, it is necessary that the series $\sum_{n=0}^{\infty}\varepsilon^{n-1}S_{n}(x)$ is a uniform asymptotic expansion of $\varepsilon^{-1}S(x)$ as $\varepsilon\to 0$. 
%
%
This implies that for any $n\in\mathbb{N}_{0}$ the relation
\begin{align}
    \varepsilon^{n}S_{n+1}(x)=o(\varepsilon^{n-1}S_{n}(x))\Comma\quad \varepsilon\to 0\Comma
\end{align}
must hold uniformly in $x\in I$.
Note that this condition is violated if the interval $I$ includes so-called \textit{turning points}, i.e., points $x_{0}\in I$ with $a(x_{0})=0$. Indeed, this is already evident from (\ref{S1}), which implies that $S_{1}$ blows up at such turning points.

\section{Error analysis}\label{section_error_analysis}
In this section we aim to find an explicit (w.r.t.\ $\varepsilon$ and the truncation order $N$) error estimate for the WKB approximation (\ref{general_wkb_solution}). One key ingredient will be the following a priori estimate for the solution $\varphi$ of the inhomogeneous analog of the Schrödinger equation-IVP (\ref{schroedinger_eq}).
\begin{Proposition}\label{proposition_apriori}
	 Let $a\in W^{1,\infty}(I)$ with $a(x)\geq a_{0}>0$ and $f\in C(I)$. Further, let $\varphi\in C^{2}(I)$ be the solution of the inhomogeneous IVP
	\begin{align}
		\begin{cases}
			\varepsilon^{2}\varphi''+a(x)\varphi=f(x)\Comma \quad x\in I\Comma\nonumber\\
			\varphi(\xi)=\hat{\varphi}_{0}\Comma\nonumber\\
			\varepsilon\varphi'(\xi)=\hat{\varphi}_{1}\Comma
		\end{cases}
	\end{align}
 with constants $\hat{\varphi}_{0},\hat{\varphi}_{1}\in \mathbb{C}$. Then there exists $C>0$ independent of $\varepsilon$ such that \anton{it holds for all $f\in C(I)$ and $\hat{\varphi}_{0},\hat{\varphi}_{1}\in \mathbb{C}$: }
	\begin{align}
		\lVert \varphi\rVert_{L^{\infty}(I)}&\leq \frac{C}{\varepsilon}\lVert f\rVert_{L^{2}(I)}+C\left(|\hat{\varphi}_{1}|+|\hat{\varphi}_{0}|\right)\Comma \label{est_apriori_1}\\
		\lVert \varepsilon\varphi'\rVert_{L^{\infty}(I)}&\leq \frac{C}{\varepsilon}\lVert f\rVert_{L^{2}(I)}+C\left(|\hat{\varphi}_{1}|+|\hat{\varphi}_{0}|\right)\period\label{est_apriori_2}
	\end{align}  
\end{Proposition}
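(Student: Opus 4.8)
The plan is to run an energy-type argument built on the natural WKB invariant for \eqref{schroedinger_eq}, while carefully tracking the powers of $\varepsilon$. First I would introduce the energy functional
\begin{align}
	E(x):=\varepsilon^{2}|\varphi'(x)|^{2}+a(x)|\varphi(x)|^{2}\period \nonumber
\end{align}
Because $a(x)\geq a_{0}>0$, this quantity is equivalent (uniformly in $\varepsilon$) to $\varepsilon^{2}|\varphi'|^{2}+|\varphi|^{2}$, so controlling $E$ controls exactly the two left-hand sides we want. Differentiating $E$ and using the ODE to substitute $\varepsilon^{2}\varphi''=f-a\varphi$, I expect the two contributions $\pm\,2a\operatorname{Re}(\overline{\varphi'}\varphi)$ to cancel, leaving the clean identity
\begin{align}
	E'(x)=2\operatorname{Re}\!\bigl(f(x)\,\overline{\varphi'(x)}\bigr)+a'(x)|\varphi(x)|^{2}\period \nonumber
\end{align}
This cancellation is the heart of the matter: it is what keeps the estimate of \emph{first} order in $1/\varepsilon$ rather than exponentially bad, since otherwise the oscillatory term would feed back into $E'$ with a factor $a/\varepsilon^{2}$.

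Next I would turn this into a differential inequality for $G:=\sqrt{E}$. From $\varepsilon|\varphi'|\leq\sqrt{E}$ and $|\varphi|^{2}\leq E/a_{0}$ together with $a\in W^{1,\infty}(I)$ one obtains $|E'|\leq \tfrac{2}{\varepsilon}|f|\sqrt{E}+\tfrac{\|a'\|_{L^\infty(I)}}{a_{0}}E$, and dividing by $2\sqrt{E}$ yields $G'\leq \tfrac{1}{\varepsilon}|f|+c\,G$ with $c:=\tfrac{\|a'\|_{L^\infty(I)}}{2a_{0}}$. Applying Gronwall's lemma on the compact interval $I=[\xi,\eta]$ and bounding the exponential integrating factor by its maximum $e^{c(\eta-\xi)}$ gives
\begin{align}
	G(x)\leq e^{c(\eta-\xi)}\,G(\xi)+\frac{e^{c(\eta-\xi)}}{\varepsilon}\int_{\xi}^{\eta}|f(s)|\,\mathrm{d}s\period \nonumber
\end{align}
A Cauchy--Schwarz step bounds $\int_{\xi}^{\eta}|f|\leq\sqrt{\eta-\xi}\,\|f\|_{L^{2}(I)}$, while the initial value satisfies $G(\xi)=\sqrt{|\hat{\varphi}_{1}|^{2}+a(\xi)|\hat{\varphi}_{0}|^{2}}\leq C(|\hat{\varphi}_{1}|+|\hat{\varphi}_{0}|)$ since $a(\xi)\leq\|a\|_{L^\infty(I)}$. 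Finally, reading off $\|\varphi\|_{L^{\infty}(I)}\leq a_{0}^{-1/2}\sup_{x}G(x)$ and $\|\varepsilon\varphi'\|_{L^{\infty}(I)}\leq\sup_{x}G(x)$ produces both \eqref{est_apriori_1} and \eqref{est_apriori_2} with a single $\varepsilon$-independent constant $C$.

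The one technical obstacle is that $G=\sqrt{E}$ need not be differentiable at points where $E$ vanishes (i.e.\ where $\varphi$ and $\varphi'$ simultaneously vanish). I would circumvent this by working with the regularized quantity $G_{\delta}:=\sqrt{E+\delta}$ for $\delta>0$, which is smooth; the same computation gives $G_{\delta}'\leq\tfrac{1}{\varepsilon}|f|+c\,G_{\delta}$ with constants independent of $\delta$, since $\sqrt{E}/\sqrt{E+\delta}\leq 1$ and $E/\sqrt{E+\delta}\leq\sqrt{E+\delta}$. Running Gronwall for $G_{\delta}$ and then letting $\delta\to 0$ recovers the bound for $G$ without any assumption on the sign or zeros of $E$. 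Beyond this, the only points requiring minor care are the complex-valued nature of $\varphi$ (so that the cancellation above must be verified via real parts) and the elementary absorption of the finitely many interval-length and coefficient constants into $C$.
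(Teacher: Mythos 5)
Your proposal is correct and follows essentially the same route as the paper's proof: the identical energy functional $E(x)=\varepsilon^{2}|\varphi'|^{2}+a|\varphi|^{2}$, the same differentiation identity $E'=2\operatorname{Re}(f\overline{\varphi'})+a'|\varphi|^{2}$, and a Gronwall argument. The only difference is that the paper applies Young's inequality ($2|f||\varphi'|\leq \varepsilon^{-2}|f|^{2}+\varepsilon^{2}|\varphi'|^{2}$) and runs Gronwall on $E$ itself, taking square roots only at the end --- which makes the $L^{2}$-norm of $f$ appear directly and sidesteps the non-differentiability of $\sqrt{E}$ that your regularization $\sqrt{E+\delta}$ is needed to handle; both executions yield the same estimate with an $\varepsilon$-independent constant.
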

\begin{proof}
	Estimates (\ref{est_apriori_1})-(\ref{est_apriori_2}) can be derived by finding an upper bound for the real-valued function $E(x):=\varepsilon^{2}|\varphi'|^{2}+a|\varphi|^{2}$. At first, it holds that
	\begin{align}\label{est_E_1}
		\frac{\mathrm{d}}{\mathrm{d}x}E(x)&=\varepsilon^{2}\frac{\mathrm{d}}{\mathrm{d}x}|\varphi'|^{2}+a\frac{\mathrm{d}}{\mathrm{d}x}|\varphi|^{2}+a'|\varphi|^2\nonumber\\
		&=2\operatorname{Re}((\varepsilon^{2}\varphi''+a\varphi)\overline{\varphi'})+a'|\varphi|^2\nonumber\\
		&=2\operatorname{Re}(f\overline{\varphi'})+a'|\varphi|^{2}\nonumber\\
		&\leq 2|f||\varphi'|+\lVert a'\rVert_{L^{\infty}(I)}|\varphi|^{2}\period
	\end{align}
	Using Young's inequality, we obtain
	\begin{align}\label{est_young}
		2|f||\varphi'|\leq \frac{1}{\varepsilon^{2}}|f|^2+\varepsilon^{2}|\varphi'|^{2}\period
	\end{align}
	Moreover, $a(x)\geq a_{0} > 0$ implies that
	\begin{align}\label{est_a_0}
		\lVert a'\rVert_{L^{\infty}(I)}|\varphi|^{2}\leq \frac{\lVert a'\rVert_{L^{\infty}(I)}}{a_{0}}a|\varphi|^{2}\period
	\end{align}
	Thus, from (\ref{est_E_1})-(\ref{est_a_0}) we obtain with $c:=\max(1,\frac{\lVert a'\rVert_{L^{\infty}(I)}}{a_{0}})\geq 1$
	\begin{align}
 \label{eq:gronwall-1}
		\frac{\mathrm{d}}{\mathrm{d}x}E(x)&\leq \frac{1}{\varepsilon^{2}} \anton{|f(x)|^{2}} +cE(x)\period
	\end{align}
	%
    Applying Gronwall's inequality \markus{(i.e., multiply \eqref{eq:gronwall-1} by $\exp(-cx)$ and integrate)}, we therefore get
	\begin{align}
		E(x)& \anton{ \leq \frac{1}{\varepsilon^{2}}\int_\xi^x |f(s)|^{2} \ee^{c(x-s)} \mathrm{d}s +E(\xi)\ee^{c(x-\xi)} } \nonumber\\
		&\leq \left(\frac{1}{\varepsilon^{2}}\lVert f\rVert_{L^{2}(I)}^{2}+E(\xi)\right)\ee^{c(x-\xi)}\nonumber\\
		&\leq \ee^{c(\eta-\xi)}\left(\frac{1}{\varepsilon^{2}}\lVert f\rVert_{L^{2}(I)}^{2}+|\hat{\varphi}_{1}|^{2}+a(\xi)|\hat{\varphi}_{0}|^{2}\right)\Comma
	\end{align}
	which implies the estimates (\ref{est_apriori_1})-(\ref{est_apriori_2}).
\end{proof}
%
In order to derive an error estimate for the WKB approximation (\ref{general_wkb_solution}), which is explicit not only w.r.t.\ $\varepsilon$ but also w.r.t.\ the truncation order $N$, it is essential to control the growth of the functions $S_{n}$ w.r.t.\ $n\in\mathbb{N}_{0}$. As a first step, we aim to establish upper bounds for the derivatives $S_{n}'$ which are given by recurrence relation (\ref{S0})-(\ref{Sn}). To this end, we employ a strategy similar to \cite[Lemma 2]{Melenk1997OnTR}, which relies heavily on Cauchy's integral formula. To enable us to apply this tool, we shall assume that $S_{0}'$ is not only defined on the real interval $I$, but also on a complex neighbourhood $G\subset \mathbb{C}$ of $I$ \markus{ and holomorphic there.} This leads us to introduce the following assumption.
\begin{Hypothesis}{A}\label{hypothesis_A}
    Let $S_{0}'$ be \markus{holomorphic (complex analytic)} on a complex, bounded, simply connected neighbourhood $G\subset\mathbb{C}$ of $I$, satisfying $S_{0}'(z)\neq 0$ for any $z\in G$.
\end{Hypothesis}
As a consequence of Hypothesis~\ref{hypothesis_A}, the function $a$ and all $S_{n}$, $n\in\mathbb{N}$, are \markus{holomorphic}  on $G$. In particular, each $S_{n}$ is bounded on $I$.

For the next lemma, we introduce, \markus{for $\delta > 0$}, the open sets
\begin{align}\label{definition_G_delta}
	G_{\delta}:=\{z\in G \mid \operatorname{dist}(z,\partial G)> \delta\}.
\end{align}
%
%
\begin{Lemma}\label{lemma_est_Sn'}
	Let Hypothesis~\ref{hypothesis_A} be satisfied and let $0<\delta\leq 1$ be such that $G_{\delta}\neq \emptyset$. Then there exists 
    \anton{a constant $K>0$ }
    depending only on $G$ and $S_{0}'$ such that
	\begin{align}
		\lVert S_{n}'\rVert_{L^{\infty}(G_{\delta})}\leq \lVert S_{0}'\rVert_{L^{\infty}(G)} \anton{K^n} 
        n^{n}\delta^{-n}\Comma\quad n\in\mathbb{N}_{0}\period\label{K_delta_est}
	\end{align}
    Here, we define $0^{0}$ as $1$.
\end{Lemma}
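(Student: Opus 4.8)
The plan is to prove the bound (\ref{K_delta_est}) by strong induction on $n$, using the recurrence (\ref{S0})--(\ref{Sn}) together with Cauchy's integral formula to convert derivatives into $L^\infty$-bounds with inverse powers of the distance to the boundary. The guiding principle (borrowed from \cite{Melenk1997OnTR}) is that differentiating a holomorphic function once costs a factor $\delta^{-1}$ and shrinks the domain by $\delta$, so the factorial-type growth $n^n\delta^{-n}$ arises naturally from iterating this process. First I would record the two analytic tools needed: (i) for a function $g$ holomorphic on $G$ and any $\delta'>0$, Cauchy's estimate gives $\lVert g'\rVert_{L^\infty(G_{\delta'})}\leq \delta'^{-1}\lVert g\rVert_{L^\infty(G)}$ (integrating over circles of radius $\delta'$ contained in $G$); and (ii) since $S_0'(z)\neq 0$ on the closure-relevant compact pieces of $G$, the function $1/(2S_0')$ is holomorphic and bounded on $G$, so there is a constant $M:=\lVert 1/(2S_0')\rVert_{L^\infty(G)}<\infty$ depending only on $G$ and $S_0'$.

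The inductive setup requires care because the bound at level $n$ is stated on the shrunken domain $G_\delta$, while the recurrence for $S_n'$ involves $S_{n-1}''$ and products $S_j'S_{n-j}'$ that must be controlled on a \emph{slightly larger} domain before differentiating. The clean way to handle this is to prove, for each fixed $n$, the estimate on $G_\delta$ for \emph{all} admissible $\delta$ simultaneously, and when applying the recurrence at level $n$ to split the available ``distance budget'': to bound $S_n'$ on $G_\delta$ I would evaluate $S_{n-1}''$ via Cauchy's formula on, say, $G_{\delta(n-1)/n}$ using the inductive bound for $S_{n-1}'$ on $G_{\delta(n-1)/n}$, picking up a differentiation factor of order $(\delta/n)^{-1}$. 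Concretely, the inductive hypothesis $\lVert S_{n-1}'\rVert_{L^\infty(G_{\delta'})}\leq \lVert S_0'\rVert_{L^\infty(G)}K^{n-1}(n-1)^{n-1}\delta'^{-(n-1)}$ applied with $\delta'=\delta(n-1)/n$, followed by one Cauchy differentiation over radius $\delta/n$, should produce a bound for $\lVert S_{n-1}''\rVert_{L^\infty(G_\delta)}$ of the right shape. The quadratic sum $\sum_{j=1}^{n-1}S_j'S_{n-j}'$ is bounded directly by the inductive hypotheses on the common domain $G_\delta$, giving a factor $\sum_{j=1}^{n-1}\binom{\text{growth}}{\,}$-type product; the key combinatorial fact to verify is that $\sum_{j=1}^{n-1} j^j (n-j)^{n-j}\leq C\, n^n$ (or a comparable inequality, after accounting for the $\lVert S_0'\rVert_{L^\infty(G)}$ factors), which is what forces the $n^n$ growth and allows the constant $K$ to absorb the combinatorial overhead.

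The main obstacle I anticipate is the bookkeeping that makes the constant $K$ genuinely independent of $n$ and $\delta$. Two quantitative estimates must align: the factor $(n/(n-1))^{n-1}$ coming from shrinking the domain (which is bounded by $\ee$, hence harmless, and gets absorbed into $K$), and the convolution-type bound on $\sum_{j} j^j(n-j)^{n-j}$ versus $n^n$. For the latter, the elementary inequality $j^j(n-j)^{n-j}\leq n^n$ for $1\leq j\leq n-1$ is the natural tool, so the sum is at most $(n-1)n^n$; the extra linear factor $n$ is then dominated by increasing $K$, since $n\leq K$ eventually fails but $n\,K^{n-1}\leq K^n$ holds once $K\geq n$—which it does \emph{not} uniformly, so instead one absorbs the polynomial factor into the exponential by choosing $K$ large enough that the recursive inequality $K^{n}n^{n}\geq (\text{const})\cdot n\cdot K^{n-1}n^{n}$ closes, i.e.\ $K\geq (\text{const})\cdot n$. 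Since this cannot hold for all $n$ with fixed $K$, the correct fix is to sharpen the combinatorial sum to $\sum_j j^j(n-j)^{n-j}\leq C\,n^n$ with $C$ independent of $n$ (which is true and follows from comparing with a Beta-integral / using that the summand peaks at the endpoints), thereby eliminating the spurious linear factor. Getting this sharp sum estimate right, and threading the domain-shrinkage fractions consistently through the induction, is where the real work lies; everything else is a routine application of Cauchy's estimate and the triangle inequality.
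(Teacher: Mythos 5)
Your machinery---induction carried out simultaneously for all admissible $\delta$, Cauchy's formula on circles of radius $\delta/n$ (your splitting $\delta'=\delta(n-1)/n$ is exactly the paper's choice $\kappa=1/n$), and the sharpened convolution bound $\sum_{j=1}^{n-1}j^j(n-j)^{n-j}\le (n-1)\cdot(n-1)^{n-1}\le n^n/\ee$ via the endpoint-peak argument---is the same as in the paper's proof, and that part of your plan is sound. The genuine gap lies in the point you defer as ``accounting for the $\lVert S_0'\rVert_{L^\infty(G)}$ factors''. With your inductive ansatz $\lVert S_j'\rVert_{L^\infty(G_{\delta})}\le\lVert S_0'\rVert_{L^\infty(G)}K^jj^j\delta^{-j}$, the convolution term of (\ref{Sn}) is \emph{quadratic} in this prefactor: after multiplying by $M:=\lVert(2S_0')^{-1}\rVert_{L^\infty(G)}$ it is bounded by $\frac{1}{\ee}\,M\lVert S_0'\rVert_{L^\infty(G)}^2K^nn^n\delta^{-n}$, while the target is $\lVert S_0'\rVert_{L^\infty(G)}K^nn^n\delta^{-n}$. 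The ratio, $\frac{1}{\ee}M\lVert S_0'\rVert_{L^\infty(G)}=\frac{1}{2\ee}\,\sup_G|S_0'|/\inf_G|S_0'|$, is \emph{independent of $K$}, because the convolution carries $K^jK^{n-j}=K^n$, the same power as the target. So enlarging $K$ cannot close the induction, and the argument fails outright whenever $\sup_G|S_0'|/\inf_G|S_0'|>2\ee$---e.g.\ for $a(x)=\ee^{5x}$ on $[0,1]$ (the paper's IVP (\ref{eqn:third_example})), where this quotient is at least $\ee^{5/2}\approx 12.2$. Moreover, the base case $n=0$ forces any prefactor valid for all $n\ge 0$ to be at least $\lVert S_0'\rVert_{L^\infty(G)}$, so the problem cannot be repaired by merely shrinking the prefactor in your ansatz.

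The paper's proof avoids this obstruction by a normalization your plan lacks: it sets $\widehat S_n':=-(2S_0')^{-1}S_n'$, so that $\widehat S_0'\equiv-\frac12$ is a universal constant and, by (\ref{hatSn}), the convolution term in the recurrence for $\widehat S_n'$ has coefficient $1$ (the factors $2S_0'$ hidden in $S_j'S_{n-j}'$ exactly cancel the $(2S_0')^{-2}$ in front). Proving $\lVert\widehat S_n'\rVert_{L^\infty(G_\delta)}\le\frac12K^nn^n\delta^{-n}$ by induction, the convolution contributes the $K$-independent ratio $\frac12\left(\frac{n-1}{n}\right)^n\le\frac{1}{2\ee}<1$, a universal constant, and \emph{all} dependence on $S_0'$ is pushed into the derivative term, which carries a factor $1/K$ and is therefore absorbed by the choice (\ref{K_definition}); undoing the normalization via $|S_n'|\le 2\lVert S_0'\rVert_{L^\infty(G)}|\widehat S_n'|$ yields (\ref{K_delta_est}). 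If you insist on your direct route, you must decouple the prefactor from $\lVert S_0'\rVert_{L^\infty(G)}$: since $S_0'$ never occurs inside the convolution sum in (\ref{Sn}), you can prove $\lVert S_n'\rVert_{L^\infty(G_\delta)}\le AB^nn^n\delta^{-n}$ for $n\ge 1$ only, with $A$ of the order of $\inf_G|S_0'|$ and with $n=1$ as base case, converting to the stated form at the end. Some such device is indispensable; as written, your induction does not close.
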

\begin{proof}
	Define the auxiliary functions $\widehat{S}_{n}':=-(2S_{0}')^{-1}S_{n}'$. By using (\ref{S0})-(\ref{Sn}) we then find that the functions $\widehat{S}_{n}'$ satisfy the following recurrence relation
	\begin{align}
		\widehat{S}_{0}'&=-\frac{1}{2}\Comma\label{hatS0}\\
		\widehat{S}_{n}'&=\left(\sum_{j=1}^{n-1}\widehat{S}_{j}'\widehat{S}_{n-j}'\right)+\left(2S_{0}'\right)^{-2}(-2S_{0}'\widehat{S}_{n-1}')'\Comma\quad n\geq 1 \label{hatSn}\period
	\end{align}
	Note that since $S_{0}'$ is \markus{holomorphic}  on $G$, it follows from recurrence relation (\ref{S0})-(\ref{Sn}) that $S_{n}'$, and hence also $\widehat{S}_{n}'$, is \markus{holomorphic}  on $G$, for every $n\in\mathbb{N}_{0}$.
	We will now prove by induction on $n$ that
	\begin{align}\label{est_hat_S_n'}
		\lVert \widehat{S}_{n}'\rVert_{L^{\infty}(G_{\delta})}\leq \frac{1}{2} \anton{K^n}
        n^{n}\delta^{-n}\Comma\quad n\in\mathbb{N}_{0}\period
	\end{align}
	Obviously, this estimate does hold for $n=0$, according to (\ref{hatS0}). Assume now that the estimate in (\ref{est_hat_S_n'}) holds for $0\leq j\leq n-1$ with some fixed $n\geq 1$. We will now prove it for $n$.
	Let $0<\kappa < 1$ and $z\in G_{\delta}$. We denote with $\partial B_{\kappa\delta}(z)$ a circle of radius $\kappa\delta$ around $z$, see the left \markus{panel} of Figure~\ref{plot:G_and_G_delta}.
	\begin{figure}
		\centering
		\subfloat[]{\includegraphics[trim=35 30 35 40,clip,scale=1.937]{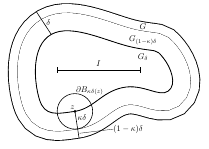}}
        \subfloat[]{\includegraphics[trim=35 30 35 40,clip,scale=1.937]{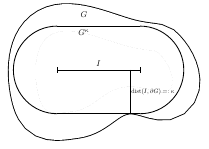}}
        \caption{(a) Exemplary sketch of the situation from the proof of Lemma~\ref{lemma_est_Sn'}: $G_{\delta}\subset G_{(1-\kappa)\delta}\subset G$, where $G$ is a complex neighbourhood of the interval $I$. Here, the point $z\in G_{\delta}$ is very close to the boundary $\partial G_{\delta}$, which makes it clear why one has to consider $G_{(1-\kappa)\delta}$ in the r.h.s.\ of (\ref{hatS_n-1}). (b) Every possible candidate $G$ for the minimum on the l.h.s.\ of (\ref{G_minimization}) can be reduced to a set $G^{\kappa}:=\{z\in\mathbb{C} \mid \operatorname{dist}(z,I)<\kappa\}$, where $\kappa:=\operatorname{dist}(I,\partial G)>0$.}
		\label{plot:G_and_G_delta}
	\end{figure}
	Then Cauchy's integral formula implies
	\begin{align}
		|(-2S_{0}'\widehat{S}_{n-1}')'(z)|&=\frac{1}{2\pi}\left|\int_{\partial B_{\kappa\delta}(z)}\frac{-2S_{0}'(\zeta)\widehat{S}_{n-1}'(\zeta)}{(\zeta-z)^{2}}\,\mathrm{d}\zeta\right|\nonumber\\
		&\leq \frac{2\pi\kappa\delta}{2\pi}2\lVert S_{0}'\rVert_{L^{\infty}(G)}\lVert \widehat{S}_{n-1}'\rVert_{L^{\infty}(\partial B_{\kappa\delta}(z))}(\kappa\delta)^{-2}\period
	\end{align}
	This, together with the fact that $\partial B_{\kappa\delta}(z)\subseteq\markus{\overline{G_{(1-\kappa)\delta}}}$ yields
	\begin{align}\label{hatS_n-1}
		\lVert (-2S_{0}'\widehat{S}_{n-1}')'\rVert_{L^{\infty}(G_{\delta})}\leq 2\lVert S_{0}'\rVert_{L^{\infty}(G)}(\kappa\delta)^{-1}\lVert \widehat{S}_{n-1}'\rVert_{L^{\infty}(G_{(1-\kappa)\delta})}\period
	\end{align}
	By applying estimate (\ref{hatS_n-1}) and the induction hypothesis to (\ref{hatSn}), we find
	\begin{align}\label{est_hatSn_first}
		\lVert \widehat{S}_{n}'\rVert_{L^{\infty}(G_{\delta})}&\leq \sum_{j=1}^{n-1}\lVert \widehat{S}_{j}'\rVert_{L^{\infty}(G_{\delta})}\lVert \widehat{S}_{n-j}'\rVert_{L^{\infty}(G_{\delta})}\nonumber\\
        &\quad+\frac{1}{4}\lVert (S_{0}')^{-2}\rVert_{L^{\infty}(G_{\delta})}2\lVert S_{0}'\rVert_{L^{\infty}(G)}(\kappa\delta)^{-1}\lVert \widehat{S}_{n-1}'\rVert_{L^{\infty}(G_{(1-\kappa)\delta})}\nonumber\\
		&\leq \frac{1}{4} \anton{K^n}
        \delta^{-n}\sum_{j=1}^{n-1}j^{j}(n-j)^{n-j}\nonumber\\
    &\quad+\frac{1}{4}\lVert (S_{0}')^{-2}\rVert_{L^{\infty}(G_{\delta})}\lVert S_{0}'\rVert_{L^{\infty}(G)}\delta^{-n}
    \anton{K^{n-1}} \frac{(n-1)^{n-1}}{\kappa(1-\kappa)^{n-1}}\period
	\end{align}
	Since $j^{j}(n-j)^{n-j}\leq (n-1)^{n-1}$ for all $1\leq j \leq n-1$, we can bound the sum in the first term of (\ref{est_hatSn_first}) by $(n-1)^{n}$. Thus, we obtain
	\begin{align}
		\lVert \widehat{S}_{n}'\rVert_{L^{\infty}(G_{\delta})}&\leq \frac{1}{2} \anton{K^n}
        n^{n}\delta^{-n}\left[\frac{1}{2}\left(\frac{n-1}{n}\right)^{n}+\frac{\lVert (S_{0}')^{-2}\rVert_{L^{\infty}(G_{\delta})}\lVert S_{0}'\rVert_{L^{\infty}(G)}}{2\anton{K}
        \kappa(1-\kappa)^{n-1}n}\left(\frac{n-1}{n}\right)^{n-1}\right]\period
	\end{align}
	It now suffices to show that the expression in the square brackets is less than or equal to $1$. By further estimating $\left(\frac{n-1}{n}\right)^{n}\leq \frac{1}{\ee}$, and choosing $\kappa=\frac{1}{n}$, we get
	\begin{align}
		\lVert \widehat{S}_{n}'\rVert_{L^{\infty}(G_{\delta})}&\leq\frac{1}{2}\anton{K^n}
        n^{n}\delta^{-n}\left[\frac{1}{2\ee}+\frac{\lVert (S_{0}')^{-2}\rVert_{L^{\infty}(G_{\delta})}\lVert S_{0}'\rVert_{L^{\infty}(G)}}{2\anton{K}
        }\right]
	\end{align}
	Thus it is sufficient to choose
 \begin{align}\label{K_definition}
     \anton{K:=\frac{\ee}{2\ee-1}\lVert (S_{0}')^{-2}\rVert_{L^{\infty}(G)} } \lVert S_{0}'\rVert_{L^{\infty}(G)}\period
 \end{align}
 %
\anton{The estimate $\lVert (S_{0}')^{-2}\rVert_{L^{\infty}(G_{\delta})}\le \lVert (S_{0}')^{-2}\rVert_{L^{\infty}(G)}$} 
concludes the proof.
\end{proof}
A simple but important implication of Lemma~\ref{lemma_est_Sn'} is the fact that we are now able to provide estimates not only for all the derivatives of $S_{n}$ but also for $S_{n}$ itself:
\begin{Corollary}\label{corollary_Snk}
	Let Hypothesis~\ref{hypothesis_A} be satisfied. 
	Then there exist constants $K_{1},K_{2}>0$ depending only on $G$ 
	and $S_{0}'$ such that 
	\begin{align}
		\lVert S_{n}\rVert_{L^{\infty}(I)}&\leq (\eta-\xi)\lVert S_{0}'\rVert_{L^{\infty}(G)}K_{2}^{n}n^{n}\Comma\quad n\in\mathbb{N}_{0}\Comma\label{Sn_est}\\
        \lVert S_{n}^{(k)}\rVert_{L^{\infty}(I)}&\leq
		\lVert S_{0}'\rVert_{L^{\infty}(G)}(k-1)!\,K_{1}^{k-1}K_{2}^{n}n^{n}\Comma\quad n\in\mathbb{N}_{0}\Comma\quad k\in\mathbb{N}\period\label{Snk_est}
	\end{align}
    Here we define $0^{0}$ as $1$.
\end{Corollary}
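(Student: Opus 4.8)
The plan is to reduce both estimates to Lemma~\ref{lemma_est_Sn'} by freezing the scale parameter $\delta$ at a single value adapted to the geometry of $I$ inside $G$. Since $I$ is compact and $G$ is open with $I\subset G$, the distance $d_{0}:=\operatorname{dist}(I,\partial G)$ is strictly positive, and I would fix once and for all $\delta_{0}:=\min\{d_{0}/2,1\}$, so that $0<\delta_{0}\le 1$ and $\delta_{0}\le d_{0}/2<d_{0}$. Two geometric facts then hold and will be used repeatedly: first, every $z\in I$ satisfies $\operatorname{dist}(z,\partial G)\ge d_{0}>\delta_{0}$, so $I\subset G_{\delta_{0}}$ (in particular $G_{\delta_{0}}\neq\emptyset$ and Lemma~\ref{lemma_est_Sn'} applies with $\delta=\delta_{0}$); second, for $z\in I$ and $\zeta\in\partial B_{\delta_{0}}(z)$ one has $\operatorname{dist}(\zeta,\partial G)\ge d_{0}-\delta_{0}\ge\delta_{0}$, so the whole circle $\partial B_{\delta_{0}}(z)$ lies in $\overline{G_{\delta_{0}}}\subset G$. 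Because $S_{n}'$ is holomorphic (hence continuous) on $G$, the bound of Lemma~\ref{lemma_est_Sn'} extends from $G_{\delta_{0}}$ to its closure, yielding $\lVert S_{n}'\rVert_{L^{\infty}(\overline{G_{\delta_{0}}})}\le \lVert S_{0}'\rVert_{L^{\infty}(G)}K^{n}n^{n}\delta_{0}^{-n}$.

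For the bound \eqref{Sn_est} on $S_{n}$ itself, I would use the normalization \eqref{Sn_definition}, i.e.\ $S_{n}(x)=\int_{\xi}^{x}(S_{n})'(\tau)\,\mathrm{d}\tau$, and estimate crudely
\begin{align}
\lVert S_{n}\rVert_{L^{\infty}(I)}\le (\eta-\xi)\,\lVert S_{n}'\rVert_{L^{\infty}(I)}\le (\eta-\xi)\,\lVert S_{n}'\rVert_{L^{\infty}(G_{\delta_{0}})}\le (\eta-\xi)\,\lVert S_{0}'\rVert_{L^{\infty}(G)}\Bigl(\tfrac{K}{\delta_{0}}\Bigr)^{n}n^{n},\nonumber
\end{align}
using $I\subset G_{\delta_{0}}$ and Lemma~\ref{lemma_est_Sn'}. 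This already gives \eqref{Sn_est} with $K_{2}:=K/\delta_{0}$.

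For the derivative bound \eqref{Snk_est}, I would write $S_{n}^{(k)}=(S_{n}')^{(k-1)}$ and apply Cauchy's integral formula for the $(k-1)$-st derivative on the circle $\partial B_{\delta_{0}}(z)$, for each fixed $z\in I$, so that $|S_{n}^{(k)}(z)|\le (k-1)!\,\delta_{0}^{-(k-1)}\,\lVert S_{n}'\rVert_{L^{\infty}(\partial B_{\delta_{0}}(z))}$ by the standard length-times-sup estimate. Inserting the extended Lemma bound from the first paragraph produces
\begin{align}
\lVert S_{n}^{(k)}\rVert_{L^{\infty}(I)}\le \lVert S_{0}'\rVert_{L^{\infty}(G)}\,(k-1)!\,\Bigl(\tfrac{1}{\delta_{0}}\Bigr)^{k-1}\Bigl(\tfrac{K}{\delta_{0}}\Bigr)^{n}n^{n},\nonumber
\end{align}
which is exactly \eqref{Snk_est} with $K_{1}:=1/\delta_{0}$ and the same $K_{2}=K/\delta_{0}$. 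Since $\delta_{0}$ depends only on $G$ and $K$ only on $G$ and $S_{0}'$ (by \eqref{K_definition}), the constants $K_{1},K_{2}$ have the claimed dependence.

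I do not expect a genuine obstacle here: the argument is essentially bookkeeping with Cauchy estimates. The only point requiring care is the geometry, namely verifying that a \emph{single} radius $\delta_{0}$ simultaneously satisfies $I\subset G_{\delta_{0}}$ and keeps all Cauchy circles inside $\overline{G_{\delta_{0}}}$, together with the constraint $\delta_{0}\le 1$ that makes Lemma~\ref{lemma_est_Sn'} applicable. It is worth noting that the factor $(k-1)!$ (rather than $k!$) and the exponent $k-1$ on $K_{1}$ arise precisely because $S_{n}'$ is differentiated $k-1$ times, which is why it is cleaner to bound derivatives of $S_{n}'$ directly than of $S_{n}$.
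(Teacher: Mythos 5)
Your proposal is correct and takes essentially the same approach as the paper: bound $S_{n}^{(k)}$ via Cauchy's integral formula on circles of a fixed radius centered near $I$, invoke Lemma~\ref{lemma_est_Sn'} for $\lVert S_{n}'\rVert$, obtaining $K_{1}=1/\delta$ and $K_{2}=K/\delta$, and deduce \eqref{Sn_est} from the $k=1$ bound together with \eqref{Sn_definition}. The only cosmetic difference is that the paper uses two nested scales ($I\subset G_{2\delta}$ with Cauchy circles of radius $\delta$, which therefore lie strictly inside $G_{\delta}$), which avoids the closure-of-$G_{\delta_{0}}$ continuity argument that your single-radius choice requires.
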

\begin{proof}
	Since $G$ is a complex neighbourhood of $I$, there is some $0<\delta\leq 1$ such that $I\subset G_{2\delta}$. To prove estimate (\ref{Snk_est}), we start with the trivial estimate $\lVert S_{n}^{(k)}\rVert_{L^{\infty}(I)}\leq\lVert S_{n}^{(k)}\rVert_{L^{\infty}(G_{2\delta})}$. Then, for any $k\in\mathbb{N}$ and $z\in G_{2\delta}$, Cauchy's integral formula implies 
 \begin{align}\label{corollary_cauchy}
        |S_{n}^{(k)}(z)|=\frac{(k-1)!}{2\pi}\left|\int_{\partial B_{\delta}(z)}\frac{S_{n}'(z)}{(\zeta-z)^{k}}\,\mathrm{d}\zeta\right|\leq (k-1)!\,\delta^{-k+1}\lVert S_{n}'\rVert_{L^{\infty}(G_{\delta})}\period
 \end{align}
 By applying Lemma~\ref{lemma_est_Sn'} on the r.h.s.\ of (\ref{corollary_cauchy}) we conclude that (\ref{Snk_est}) holds with $K_{1}:=1/\delta$ and $K_{2}:= \anton{K/\delta}$.
 Estimate (\ref{Sn_est}) then follows from (\ref{Snk_est}) for $k=1$ and by the definition of $S_n$, see (\ref{Sn_definition}).
\end{proof}
\begin{Remark}\label{remark:K_2_trade_off}
    Of course, it is of great interest to find a constant $K_{2}$ from Corollary~\ref{corollary_Snk}  is as small as possible. To this end one would have to minimize the constant \anton{$K/\delta$}
    in estimate (\ref{K_delta_est}). In particular, one has to fix some complex neighbourhood $G$ of $I$ as well as a constant $0<\delta\leq 1$ such that it holds $I\subset G_{\delta}$. Further, the proof of Lemma~\ref{lemma_est_Sn'} indicates that \anton{$K$}
    can be reduced by choosing $G$ small, see (\ref{K_definition}). However, this means that one is forced to reduce also the value of $\delta$. Hence, this procedure usually results in a trade-off between the magnitudes of \anton{$K$}
    and $\delta$. More precisely, one \markus{is lead} to solve the following minimization problem:
    \begin{align}\label{G_minimization}
        \anton{
        \inf_{\substack{0<\delta\leq 1 \\ G\subset \mathbb{C} \\ I\subset G_{\delta}}}\frac{\lVert (S_{0}')^{-2}\rVert_{L^{\infty}(G)}\lVert S_{0}'\rVert_{L^{\infty}(G)}}{\delta}
        =\min_{0<\delta\leq 1 }
        \frac{\lVert (S_{0}')^{-2}\rVert_{L^{\infty}(G^\delta)} \lVert S_{0}'\rVert_{L^{\infty}(G^{\delta})}}{\delta}\Comma  }
        \end{align}
    where $G^{\delta}:=\{z\in\mathbb{C} \mid \operatorname{dist}(z,I)<\delta\}$. Equality in (\ref{G_minimization}) holds for the following reasons: First, on the l.h.s.\ of (\ref{G_minimization}) one only needs to consider sets $G\subset\mathbb{C}$ of the form $G=G^{\kappa}:=\{z\in\mathbb{C} \mid \operatorname{dist}(z,I)<\kappa\}$, with $\kappa>0$. For any $0<\delta\leq 1$ such that $I\subset G_{\delta}$, this follows since the numerator on the l.h.s.\ of (\ref{G_minimization}) is not increased when 
    \anton{
    replacing $G$ 
    by $G^{\operatorname{dist}(I,\partial G)}$,  
    %
    %
    see the right panel of Figure~\ref{plot:G_and_G_delta}.
    The condition $I\subset G_{\delta}$ then simply reads $\kappa>\delta$.
    %
    %
    Second, since for a fixed $0<\delta\leq 1$ and $\delta<\kappa_{1}<\kappa_{2}$ it holds that $G^{\kappa_{1}}\subset G^{\kappa_{2}}$, 
    it is sufficient to consider simply the sets $G^{\delta+\epsilon}$, with $\epsilon>0$ being an arbitrarily small number. The equality in (\ref{G_minimization}) then follows from the fact that $\markus{\bigcap_{\epsilon> 0}G^{\delta+\epsilon}=\overline{G^{\delta}}}$. 
    }
\end{Remark}
We will later make use of the residual of the WKB approximation (\ref{general_wkb_solution}) w.r.t.\ the ODE in (\ref{schroedinger_eq}). For this, the following lemma will be helpful.
\begin{Lemma}\label{lemma_residual}
	Denote with $L_{\varepsilon}:=\varepsilon^{2}\frac{\dd^{2}}{\dd x^{2}}+a(x)$ the linear operator appearing in the Schrödinger equation (\ref{schroedinger_eq}) and let $\widetilde{\varphi}_{N}:=\exp\left(\sum_{n=0}^{N}\varepsilon^{n-1}S_{n}\right)$, $N\in\mathbb{N}_{0}$. Then it holds
	\begin{align}\label{residual_equation}
	   L_{\varepsilon}\widetilde{\varphi}_{N}=\widetilde{\varphi}_{N}f_{N,\varepsilon}\Comma
	\end{align}
	where
	\begin{align}\label{residual_f_eps}
		f_{N,\varepsilon}=\varepsilon^{N+1}(-2S_{0}'S_{N+1}')+\sum_{n=2}^{N}\sum_{k=2+N-n}^{N}\varepsilon^{n+k}S_{n}'S_{k}'\semicolon
	\end{align}
    for $N<2$ the double sum is dropped.
\end{Lemma}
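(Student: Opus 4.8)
The plan is to compute $L_\varepsilon\widetilde\varphi_N$ by direct differentiation and then reorganize the result by powers of $\varepsilon$, using the recurrence (\ref{S0})--(\ref{Sn}) to cancel all low-order contributions. Writing $\Sigma_N:=\sum_{n=0}^N\varepsilon^{n-1}S_n$ so that $\widetilde\varphi_N=\exp(\Sigma_N)$, the chain rule gives $\widetilde\varphi_N'=\Sigma_N'\widetilde\varphi_N$ and $\widetilde\varphi_N''=(\Sigma_N''+(\Sigma_N')^2)\widetilde\varphi_N$, whence
\begin{align}
L_\varepsilon\widetilde\varphi_N=\left(\varepsilon^2\Sigma_N''+\varepsilon^2(\Sigma_N')^2+a\right)\widetilde\varphi_N.
\end{align}
Since $\widetilde\varphi_N$ never vanishes, it suffices to show that the bracket equals $f_{N,\varepsilon}$. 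Substituting $\Sigma_N'=\sum_{n=0}^N\varepsilon^{n-1}S_n'$ and $\Sigma_N''=\sum_{n=0}^N\varepsilon^{n-1}S_n''$ and multiplying out, the bracket becomes
\begin{align}
a+\sum_{n=0}^N\varepsilon^{n+1}S_n''+\sum_{n=0}^N\sum_{k=0}^N\varepsilon^{n+k}S_n'S_k'.
\end{align}

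Next I would collect the coefficient of each power $\varepsilon^m$ and show it vanishes for $0\le m\le N$. The $\varepsilon^0$-coefficient is $a+(S_0')^2$, which is zero by (\ref{S0}); the $\varepsilon^1$-coefficient is $S_0''+2S_0'S_1'$, which is zero by (\ref{S1}); and for $2\le m\le N$ the coefficient is $S_{m-1}''+2S_0'S_m'+\sum_{j=1}^{m-1}S_j'S_{m-j}'$, which vanishes by (\ref{Sn}) with $n=m$. Here one uses that for $m\le N$ no boundary effects occur in the double sum, i.e.\ every pair $(n,k)$ with $n+k=m$ automatically satisfies $n,k\le m\le N$; these pairs split into the two terms with $n=0$ or $k=0$, giving $2S_0'S_m'$, and the terms with $1\le n,k$, giving $\sum_{j=1}^{m-1}S_j'S_{m-j}'$. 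This cancels everything up to and including order $\varepsilon^N$, leaving precisely the single-sum term $\varepsilon^{N+1}S_N''$ together with all double-sum terms with $n+k\ge N+1$.

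The final step is to identify this remainder with $f_{N,\varepsilon}$. I would re-express $S_N''$ via the recurrence (\ref{Sn}) applied with $n=N+1$ — legitimate since $S_{N+1}'$ is well defined by the recurrence even though it does not enter $\widetilde\varphi_N$ — which yields $S_N''=-2S_0'S_{N+1}'-\sum_{j=1}^N S_j'S_{N+1-j}'$. The term $-\varepsilon^{N+1}\sum_{j=1}^N S_j'S_{N+1-j}'$ then cancels exactly against the order-$\varepsilon^{N+1}$ part of the leftover double sum (the pairs with $n+k=N+1$, which necessarily satisfy $1\le n,k\le N$), leaving $\varepsilon^{N+1}(-2S_0'S_{N+1}')$ together with the double-sum terms of order $n+k\ge N+2$.

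The main obstacle is the combinatorial bookkeeping of the double sum: one must verify that the surviving index set $\{(n,k):0\le n,k\le N,\ n+k\ge N+2\}$ coincides with $\{(n,k):2\le n\le N,\ 2+N-n\le k\le N\}$ from (\ref{residual_f_eps}). This holds because $n+k\ge N+2$ together with $k\le N$ forces $n\ge 2$, while the inner bound $k\ge 2+N-n$ is exactly the condition $n+k\ge N+2$. Finally, the degenerate cases $N=0$ and $N=1$, where the double sum in (\ref{residual_f_eps}) is dropped, should be checked directly: there the remainder is simply $\varepsilon^{N+1}S_N''=\varepsilon^{N+1}(-2S_0'S_{N+1}')$ by the same application of the recurrence, confirming the stated formula.
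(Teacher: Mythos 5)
Your proof is correct and follows essentially the same route as the paper: expand $L_\varepsilon\widetilde{\varphi}_N=\widetilde{\varphi}_N\bigl(a+\sum_n\varepsilon^{n+1}S_n''+\sum_{n,k}\varepsilon^{n+k}S_n'S_k'\bigr)$, cancel all $\varepsilon$-powers up to order $N$ via the recurrence, convert the order-$(N+1)$ terms into $\varepsilon^{N+1}(-2S_0'S_{N+1}')$ using the recurrence for $S_{N+1}'$, and identify the surviving index set $\{n+k\geq N+2\}$ with the double sum in the statement. One tiny slip in your closing remark: for $N=1$ the remainder after cancellation is $\varepsilon^2\bigl(S_1''+(S_1')^2\bigr)$, not just $\varepsilon^2 S_1''$, but this is harmless since your general argument (which already covers $N=0$ and $N=1$ uniformly) handles it correctly.
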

\begin{proof}
	First we observe that
	\begin{align}\label{residual_step_1}
		L_{\varepsilon}\widetilde{\varphi}_{N}&=\varepsilon^{2}\widetilde{\varphi}_{N}^{\prime\prime}+a(x)\widetilde{\varphi}_{N}\nonumber\\
		&=\varepsilon^{2}\widetilde{\varphi}_{N}\left(\left(\frac{1}{\varepsilon}\sum_{n=0}^{N}\varepsilon^{n}S_{n}'\right)^{2}+\frac{1}{\varepsilon}\sum_{n=0}^{N}\varepsilon^{n}S_{n}''\right)+a(x)\widetilde{\varphi}_{N}\nonumber\\
		&=\widetilde{\varphi}_{N}\left( \anton{\sum_{n=0}^N\sum_{k=0}^N\varepsilon^{n+k}S_{n}'S_{k}' }+\sum_{n=0}^{N}\varepsilon^{n+1}S_{n}''+a(x)\right)\period
	\end{align}
	Let us denote the second factor in (\ref{residual_step_1}) by $f_{N,\varepsilon}$. We will now show that $f_{N,\varepsilon}$ reduces to (\ref{residual_f_eps}). To this end, let us first rewrite $f_{N,\varepsilon}$ as
	\begin{align}\label{residual_step_2}
		f_{N,\varepsilon}&=\left(S_{0}'^{2}+a\right)+\Bigg( \anton{\sum_{n=0}^N \sum_{k=\max(0,1-n)}^{N-n}\varepsilon^{n+k}S_{n}'S_{k}' }+
    \sum_{n=0}^{N-1}\varepsilon^{n+1}S_{n}''\Bigg)\nonumber\\
		&\quad+\Bigg(\anton{ \sum_{n=1}^{N} \varepsilon^{N+1}S_{N+1-n}'S_{n}' }
    +\varepsilon^{N+1}S_{N}''\Bigg)+ \anton{ \sum_{n=2}^N \sum_{k=N+2-n}^N \varepsilon^{n+k}S_{n}'S_{k}'}
    \period
	\end{align}
	Now, the first term in (\ref{residual_step_2}) vanishes due to (\ref{S0}). The second term also vanishes since
	\begin{align}
        \anton{\sum_{n=0}^N \sum_{k=\max(0,1-n)}^{N-n}\varepsilon^{n+k}S_{n}'S_{k}' }
        &=\sum_{n=0}^{N-1}\varepsilon^{n+1}\sum_{j=0}^{n+1}S_{j}'S_{n+1-j}'\nonumber\\
		&=\sum_{n=0}^{N-1}\varepsilon^{n+1}\left(2S_{0}'S_{n+1}'+\sum_{j=1}^{n}S_{j}'S_{n+1-j}'\right)\nonumber\\
		&=-\sum_{n=0}^{N-1}\varepsilon^{n+1}S_{n}''\Comma
	\end{align}
	where we used in the last equation recurrence relation (\ref{Sn}) for the function $S_{n+1}'$. Finally, by using (\ref{Sn}) for the function $S_{N+1}'$, the third term in (\ref{residual_step_2}) simplifies to $\varepsilon^{N+1}(-2S_{0}'S_{N+1}')$. The claim follows.
\end{proof}
Recalling that $S_{0}(x)\in \ii\mathbb{R}$ we note that $\widetilde{\varphi}_{N}(x)$ is, for fixed $x\in I$, uniformly bounded w.r.t.\ $\varepsilon\in (0,1]$. Thus the r.h.s.\ of (\ref{residual_equation}) is of the order $\mathcal{O}(\varepsilon^{N+1})$, and we conclude from Lemma~\ref{lemma_residual} that the function $\widetilde{\varphi}_{N}$ satisfies the ODE $L_{\varepsilon}\varphi=0$ asymptotically, as $\varepsilon\to 0$. This is one of the main properties we can utilize to show that also the numerical error of the WKB approximation (\ref{general_wkb_solution}) will approach $0$ as $\varepsilon\to 0$, at least for $N\geq1$. To this end we need the following lemma.
\begin{Lemma}\label{lemma_alpha_N_eps_phi_est}
	Let Hypothesis~\ref{hypothesis_A} be satisfied and define $\varphi_{N}^{\pm}:=\exp\left(\sum_{n=0}^{N}\varepsilon^{n-1}S_{n}^{\pm}\right)$, $N\in\mathbb{N}_{0}$. Then there exist constants $\varepsilon_{0}\in(0,1)$ and $C>0$ such that it holds for $\varepsilon\in(0,\varepsilon_{0}]$:
	\begin{align}
		\lVert \alpha_{N,\varepsilon}\varphi_{N}^{-}\rVert_{L^{\infty}(I)}&\leq C\left(|\varphi_{0}|\,\lVert S_{0}'\rVert_{L^{\infty}(G)}\sum_{n=0}^{N}\varepsilon^{n}K_{2}^{n}n^{n}+|\varphi_{1}|\right)\nonumber\\
		&\quad\times\exp\left((\eta-\xi)\lVert S_{0}'\rVert_{L^{\infty}(G)}\sum_{n=0}^{\lfloor \frac{N-1}{2}\rfloor}\varepsilon^{2n}K_{2}^{2n+1}(2n+1)^{2n+1}\right)\Comma
	\end{align}
    with $\alpha_{N,\varepsilon}$ from (\ref{alpha_N_eps}). For $N=0$ the last sum is dropped. The same estimate holds for $\lVert \beta_{N,\varepsilon}\varphi_{N}^{+}\rVert_{L^{\infty}(I)}$.
	In particular, since the initial values $\varphi_{0}$ and $\varphi_{1}$ are assumed to be uniformly bounded w.r.t.\ $\varepsilon$, so is $\varphi_{N}^{WKB}$ in $L^{\infty}(I)$.
\end{Lemma}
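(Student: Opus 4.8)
The plan is to prove the estimate for $\alpha_{N,\varepsilon}\varphi_N^-$ by treating the $x$-independent prefactor $\alpha_{N,\varepsilon}$ and the amplitude $\lVert\varphi_N^-\rVert_{L^{\infty}(I)}$ separately, exploiting that $\lVert\alpha_{N,\varepsilon}\varphi_N^-\rVert_{L^{\infty}(I)}=|\alpha_{N,\varepsilon}|\,\lVert\varphi_N^-\rVert_{L^{\infty}(I)}$; the $+$ case follows by symmetry. The exponential factor in the claim will come entirely from $\lVert\varphi_N^-\rVert_{L^{\infty}(I)}$, the first factor from the numerator of $\alpha_{N,\varepsilon}$, and the constant $C$ from a uniform lower bound on the denominator of $\alpha_{N,\varepsilon}$.

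First I would bound the amplitude. Since $\varphi_N^-(\xi)=1$ and $|\varphi_N^-(x)|=\exp(\operatorname{Re}\sum_{n=0}^{N}\varepsilon^{n-1}S_n^-(x))$, I use the parity structure of Proposition~\ref{proposition_S_and_S_tilde}: by (\ref{proposition_S_and_S_tilde_implication1}) the even-indexed $S_{2m}^-(x)$ are purely imaginary and drop out of the real part, while by (\ref{proposition_S_and_S_tilde_implication2}) the odd-indexed $S_{2m+1}^-(x)$ are real. Hence the real part reduces to $\sum_{m=0}^{\lfloor(N-1)/2\rfloor}\varepsilon^{2m}S_{2m+1}^-(x)$, and estimating each term via (\ref{Sn_est}) of Corollary~\ref{corollary_Snk} with index $n=2m+1$ reproduces precisely the exponent appearing in the claimed bound, so that $\lVert\varphi_N^-\rVert_{L^{\infty}(I)}$ is controlled by the stated exponential factor.

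Next I would bound the numerator of $\alpha_{N,\varepsilon}$ in (\ref{alpha_N_eps}). Writing it as $\varphi_0\sum_{n=0}^{N}\varepsilon^n(S_n^+)'(\xi)-\varphi_1$ and estimating $|(S_n^+)'(\xi)|\le\lVert S_n'\rVert_{L^{\infty}(I)}\le\lVert S_0'\rVert_{L^{\infty}(G)}K_2^n n^n$ through (\ref{Snk_est}) with $k=1$ yields exactly the first factor $|\varphi_0|\lVert S_0'\rVert_{L^{\infty}(G)}\sum_{n=0}^{N}\varepsilon^n K_2^n n^n+|\varphi_1|$ of the claim. It then remains to show that the denominator $D:=\sum_{n=0}^{N}\varepsilon^n((S_n^+)'(\xi)-(S_n^-)'(\xi))$ is bounded away from $0$ uniformly in $N$ and in $\varepsilon\in(0,\varepsilon_0]$; this supplies the constant $C$.

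The denominator is where the real work lies. By Proposition~\ref{proposition_S_and_S_tilde} the odd-indexed contributions cancel, $(S_{2m+1}^+)'=(S_{2m+1}^-)'$, while the even-indexed ones double, $(S_{2m}^+)'-(S_{2m}^-)'=2(S_{2m}^+)'\in\ii\mathbb{R}$, so $D=2\sum_{m=0}^{\lfloor N/2\rfloor}\varepsilon^{2m}(S_{2m}^+)'(\xi)$ is purely imaginary with $\varepsilon$-independent leading term $2(S_0^+)'(\xi)=2\ii\sqrt{a(\xi)}\neq0$ by (\ref{S0}). I would therefore split off this leading term and dominate the remainder using (\ref{Snk_est}), obtaining $|D|\ge 2(\sqrt{a_0}-\lVert S_0'\rVert_{L^{\infty}(G)}\sum_{m\ge1}(2mK_2\varepsilon)^{2m})$; choosing $\varepsilon_0$ so small that the remainder series is at most $\sqrt{a_0}/2$ then gives $|D|\ge\sqrt{a_0}>0$ and hence $|\alpha_{N,\varepsilon}|\le a_0^{-1/2}\cdot(\text{numerator})$. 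The main obstacle is exactly this last step: the crude majorant $\sum_{m\ge1}(2mK_2\varepsilon)^{2m}$ diverges as the summation limit $\lfloor N/2\rfloor\to\infty$ at fixed $\varepsilon$, so dominating it \emph{uniformly} in $N$ requires care. In the regime relevant for optimal truncation, $N=\mathcal{O}(\varepsilon^{-1})$, one has $2mK_2\varepsilon\lesssim1$ throughout the sum and the remainder is in fact $\mathcal{O}(\varepsilon^2)$, which is what makes the uniform choice of $\varepsilon_0$ possible. Combining the amplitude bound, the numerator bound and this lower bound on $|D|$ then yields the assertion, with the $+$ case following verbatim.
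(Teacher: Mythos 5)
Your proposal follows essentially the same route as the paper's proof: factor the norm as $|\alpha_{N,\varepsilon}|\,\lVert\varphi_{N}^{-}\rVert_{L^{\infty}(I)}$, control the amplitude through the parity structure of Proposition~\ref{proposition_S_and_S_tilde} (only the real, odd-indexed $S_{2n+1}^{-}$ survive in the real part of the exponent) combined with (\ref{Sn_est}), bound the numerator of (\ref{alpha_N_eps}) via (\ref{Snk_est}) with $k=1$, and lower-bound the purely imaginary denominator $2\sum_{n=0}^{\lfloor N/2\rfloor}\varepsilon^{2n}(S_{2n}^{+})'(\xi)$ by splitting off the leading term $2\ii\sqrt{a(\xi)}$ and using the triangle inequality. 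All three ingredients coincide with the paper's argument.

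The one substantive difference is the step you yourself flag as the main obstacle, and it deserves comment because it cuts both ways. The paper disposes of the denominator tersely: it asserts $\sqrt{a_{0}}-\sum_{n=1}^{\lfloor N/2\rfloor}\varepsilon^{2n}|(S_{2n}^{+})'(\xi)|\geq \frac{1}{2C}$ for all $\varepsilon\in(0,\varepsilon_{0}]$, justified only by the remark that each $(S_{2n}^{+})'$ is bounded on $I$; uniformity in $N$ is not addressed there at all. You make the difficulty explicit: with the only available growth bound $|(S_{2n}^{+})'(\xi)|\leq \lVert S_{0}'\rVert_{L^{\infty}(G)}K_{2}^{2n}(2n)^{2n}$ from Corollary~\ref{corollary_Snk}, the majorant $\sum_{m\geq 1}(2mK_{2}\varepsilon)^{2m}$ diverges as the upper summation limit grows at fixed $\varepsilon$, and your fix --- restricting to the regime $NK_{2}\varepsilon\lesssim 1$, i.e.\ $N=\mathcal{O}(\varepsilon^{-1})$ --- proves the lemma only there. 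Strictly speaking this is weaker than the statement, which quantifies over all $N\in\mathbb{N}_{0}$ with constants that Theorem~\ref{theorem_wkb_error} subsequently needs to be $N$-independent; so, as written, your proof carries a genuine restriction. But it is exactly the soft spot that the paper's own proof passes over silently, and the regime $N\sim\varepsilon^{-1}$ is the one in which the lemma is actually deployed in Corollary~\ref{corollary_exp_small_error}. To close the gap fully one would need either a lower bound on the denominator that tolerates the possibly large, oscillating higher-order imaginary terms, or to build the restriction $N\lesssim\varepsilon^{-1}$ into the hypotheses; your write-up would be strengthened by stating this explicitly rather than presenting the restricted argument as if it yielded the unrestricted claim.
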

\begin{proof}
	We will prove only the estimate for $\alpha_{N,\varepsilon}\varphi_{N}^{-}$. For $\beta_{N,\varepsilon}\varphi_{N}^{+}$ it is fully analogous. First notice that Proposition~\ref{proposition_S_and_S_tilde} implies that
	\begin{align}
		|\alpha_{N,\varepsilon}|=\frac{\left|\varphi_{0}\sum_{n=0}^{N}\varepsilon^{n}(S_{n}^{+})'(\xi)-\varphi_{1}\right|}{2\left|\sum_{n=0}^{\lfloor \frac{N}{2}\rfloor}\varepsilon^{2n}(S_{2n}^{+})'(\xi)\right|}\period
	\end{align}
	Due to $a(x)\geq a_{0} >0$, we have $|(S_{0}^{+})'(\xi)|\geq \sqrt{a_{0}}>0$. Thus, there exists $\varepsilon_{0}\in(0,1)$ sufficiently small such that
	\begin{align}
		\left|\sum_{n=0}^{\lfloor \frac{N}{2}\rfloor}\varepsilon^{2n}(S_{2n}^{+})'(\xi)\right|&\geq \left|(S_{0}^{+})'(\xi)\right|-\sum_{n=1}^{\lfloor \frac{N}{2}\rfloor}\varepsilon^{2n}\left|(S_{2n}^{+})'(\xi)\right|\nonumber\\
		&\geq \sqrt{a_{0}}-\sum_{n=1}^{\lfloor \frac{N}{2}\rfloor}\varepsilon^{2n}\left|(S_{2n}^{+})'(\xi)\right|\nonumber\\
        &\geq \frac{1}{2C}
	\end{align}
	for all $\varepsilon\in(0,\varepsilon_{0}]$ and some $C>0$ (since $(S_{2n}^{+})'$ is bounded on $I$). Hence, we obtain
	\begin{align}\label{alpha_N_eps_est1}
		|\alpha_{N,\varepsilon}|\leq C \left|\varphi_{0}\sum_{n=0}^{N}\varepsilon^{n}(S_{n}^{+})'(\xi)-\varphi_{1}\right|\period
	\end{align}
	Next, (\ref{proposition_S_and_S_tilde_implication1})-(\ref{proposition_S_and_S_tilde_implication2}) imply $|\varphi_{N}^{-}(x)|\leq\exp\left(\sum_{n=0}^{\lfloor \frac{N-1}{2}\rfloor}\varepsilon^{2n}\left|S_{2n+1}^{-}(x)\right|\right)$ for all $x\in I$. Together with (\ref{alpha_N_eps_est1}) this yields
	\begin{align}
	|\alpha_{N,\varepsilon}\varphi_{N}^{-}(x)| \leq C \left(|\varphi_{0}|\sum_{n=0}^{N}\varepsilon^{n}\left|(S_{n}^{+})'(\xi)\right|+|\varphi_{1}|\right)\exp\left(\sum_{n=0}^{\lfloor \frac{N-1}{2}\rfloor}\varepsilon^{2n}\left|S_{2n+1}^{-}(x)\right|\right)
	\end{align}
    for all $x\in I$. Applying Corollary~\ref{corollary_Snk} then yields the claim.
\end{proof}
Finally, we provide an error estimate for the WKB approximation (\ref{general_wkb_solution}).
\begin{Theorem}\label{theorem_wkb_error}
	Let Hypothesis~\ref{hypothesis_A} be satisfied and let $\varphi\in C^{2}(I)$ be the solution of IVP (\ref{schroedinger_eq}). There exist constants $\varepsilon_{0}\in(0,1)$ and $C>0$ independent of $N$ and $\varepsilon$ such that it holds for $\varepsilon\in(0,\varepsilon_{0}]$:
	\begin{align}\label{wkb_error}
		\lVert\varphi-\varphi_{N}^{WKB}\rVert_{L^{\infty}(I)}&\leq C\lVert S_{0}'\rVert_{L^{\infty}(G)}^{2}\left(|\varphi_{0}|\,\lVert S_{0}'\rVert_{L^{\infty}(G)}\sum_{n=0}^{N}\varepsilon^{n}K_{2}^{n}n^{n}+|\varphi_{1}|\right)\nonumber\\
		&\quad\times\exp\left((\eta-\xi)\lVert S_{0}'\rVert_{L^{\infty}(G)}\sum_{n=0}^{\lfloor \frac{N-1}{2}\rfloor}\varepsilon^{2n}K_{2}^{2n+1}(2n+1)^{2n+1}\right)\nonumber\\
		&\quad\times\Bigg(\varepsilon^{N}K_{2}^{N+1}(N+1)^{N+1}+\sum_{n=2}^{N}\sum_{k=2+N-n}^{N}\varepsilon^{n+k-1}K_{2}^{n+k}n^{n}k^{k}\Bigg)\period
	\end{align}
    For $N=0$ the sum in the exponential function is dropped, and for $N<2$ the double sum is dropped.
\end{Theorem}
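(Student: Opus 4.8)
The plan is to reformulate the claim as an a priori estimate for the \emph{error} $e:=\varphi-\varphi_N^{WKB}$ and to feed this error into Proposition~\ref{proposition_apriori}. First I would check that $e$ solves an inhomogeneous Schrödinger IVP with \emph{vanishing} initial data: by construction of $\alpha_{N,\varepsilon},\beta_{N,\varepsilon}$ in (\ref{alpha_N_eps})--(\ref{beta_N_eps}), together with $S_n^{\pm}(\xi)=0$ from (\ref{Sn_definition}), the approximation $\varphi_N^{WKB}$ satisfies the initial conditions in (\ref{schroedinger_eq}) exactly, so $e(\xi)=0$ and $\varepsilon e'(\xi)=0$. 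Since $L_\varepsilon\varphi=0$, applying $L_\varepsilon$ to $\varphi_N^{WKB}=\alpha_{N,\varepsilon}\varphi_N^-+\beta_{N,\varepsilon}\varphi_N^+$ and invoking Lemma~\ref{lemma_residual} once for each sign gives $L_\varepsilon e=-f$, where $f:=\alpha_{N,\varepsilon}\varphi_N^-f_{N,\varepsilon}^-+\beta_{N,\varepsilon}\varphi_N^+f_{N,\varepsilon}^+$. Thus $e$ is the solution of the inhomogeneous IVP with right-hand side $-f$ and homogeneous initial values.

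Next I would apply Proposition~\ref{proposition_apriori} with $\hat\varphi_0=\hat\varphi_1=0$ (noting that $a$ is holomorphic, hence in $W^{1,\infty}(I)$, under Hypothesis~\ref{hypothesis_A}), which yields $\lVert e\rVert_{L^\infty(I)}\le (C/\varepsilon)\lVert f\rVert_{L^2(I)}\le (C/\varepsilon)\sqrt{\eta-\xi}\,\lVert f\rVert_{L^\infty(I)}$. It then remains to bound $\lVert f\rVert_{L^\infty(I)}$, which I would split by the triangle inequality into the two products $\lVert\alpha_{N,\varepsilon}\varphi_N^-\rVert_{L^\infty(I)}\lVert f_{N,\varepsilon}^-\rVert_{L^\infty(I)}$ and its $+$ analogue. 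The amplitude factors $\lVert\alpha_{N,\varepsilon}\varphi_N^-\rVert_{L^\infty(I)}$ and $\lVert\beta_{N,\varepsilon}\varphi_N^+\rVert_{L^\infty(I)}$ are bounded directly by Lemma~\ref{lemma_alpha_N_eps_phi_est}, which supplies exactly the factor $\bigl(|\varphi_0|\,\lVert S_0'\rVert_{L^\infty(G)}\sum_{n=0}^N\varepsilon^nK_2^nn^n+|\varphi_1|\bigr)\exp(\cdots)$ appearing in the first two lines of (\ref{wkb_error}).

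For the residual factor I would insert the explicit form (\ref{residual_f_eps}) and estimate each derivative by Corollary~\ref{corollary_Snk} with $k=1$, i.e.\ $\lVert S_n'\rVert_{L^\infty(I)}\le\lVert S_0'\rVert_{L^\infty(G)}K_2^nn^n$. The leading term then contributes at most $2\varepsilon^{N+1}\lVert S_0'\rVert_{L^\infty(G)}^2K_2^{N+1}(N+1)^{N+1}$ and the double sum contributes $\lVert S_0'\rVert_{L^\infty(G)}^2\sum_{n=2}^N\sum_{k=2+N-n}^N\varepsilon^{n+k}K_2^{n+k}n^nk^k$; the extra factor $\lVert S_0'\rVert_{L^\infty(G)}^2$ in front of (\ref{wkb_error}) arises precisely from these products of two $S$-derivatives. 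Since the estimates for the $-$ and $+$ sequences coincide (Corollary~\ref{corollary_Snk} is sign-independent), both summands of $f$ obey the same bound and the resulting factor of $2$ is absorbed into $C$. Finally, the prefactor $1/\varepsilon$ from the a priori estimate lowers every power of $\varepsilon$ by one, converting $\varepsilon^{N+1}$ into $\varepsilon^N$ and $\varepsilon^{n+k}$ into $\varepsilon^{n+k-1}$, which reproduces the last line of (\ref{wkb_error}); all remaining $N$- and $\varepsilon$-independent constants (including $\sqrt{\eta-\xi}$) are absorbed into $C$, while $\varepsilon_0$ is inherited from Lemma~\ref{lemma_alpha_N_eps_phi_est}.

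The main obstacle is conceptual rather than computational: recognizing that the error inherits vanishing initial data and is driven by a forcing equal to the WKB residual times the approximate amplitude, so that the problem decouples into the three already-established ingredients (the energy-type a priori bound, the residual identity, and the growth bounds on $S_n$ and $S_n'$). Once this structure is in place, the estimate is assembled mechanically; the only delicate bookkeeping is the consistent tracking of the two powers of $\lVert S_0'\rVert_{L^\infty(G)}$ produced by the residual and of the single power of $\varepsilon$ lost to Proposition~\ref{proposition_apriori}. I would also note that the degenerate cases $N=0$ and $N<2$ require merely dropping the correspondingly empty sums, consistent with the conventions already fixed in Lemmas~\ref{lemma_residual} and \ref{lemma_alpha_N_eps_phi_est}.
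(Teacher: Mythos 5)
Your proposal is correct and follows essentially the same argument as the paper's own proof: the error solves an inhomogeneous IVP with vanishing initial data and forcing given by the residuals from Lemma~\ref{lemma_residual}, to which Proposition~\ref{proposition_apriori} is applied with $\hat{\varphi}_{0}=\hat{\varphi}_{1}=0$, with the residual bounded via Corollary~\ref{corollary_Snk} and the amplitude factors via Lemma~\ref{lemma_alpha_N_eps_phi_est}. Your bookkeeping of the two powers of $\lVert S_{0}'\rVert_{L^{\infty}(G)}$, the $L^{2}$-to-$L^{\infty}$ conversion via $\sqrt{\eta-\xi}$, and the loss of one power of $\varepsilon$ matches the paper exactly.
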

\begin{proof}
	To compute the residual of the WKB approximation (\ref{general_wkb_solution}), we notice that $\varphi_{N}^{WKB}=\alpha_{N,\varepsilon}\varphi_{N}^{-}+\beta_{N,\varepsilon}\varphi_{N}^{+}$, where $\varphi_{N}^{\pm}=\exp\left(\sum_{n=0}^{N}\varepsilon^{n-1}S_{n}^{\pm}\right)$. By applying Lemma~\ref{lemma_residual}, we obtain
	\begin{align}
		L_{\varepsilon}(\varphi-\varphi_{N}^{WKB})&=-\alpha_{N,\varepsilon}L_{\varepsilon}\varphi_{N}^{-}-\beta_{N,\varepsilon}L_{\varepsilon}\varphi_{N}^{+}\nonumber\\
		&=-\alpha_{N,\varepsilon}\varphi_{N}^{-}f_{N,\varepsilon}^{-}-\beta_{N,\varepsilon}\varphi_{N}^{+}f_{N,\varepsilon}^{+}\Comma
	\end{align}
	where the functions $f_{N,\varepsilon}^{\pm}$ are given by (\ref{residual_f_eps}) when inserting $S_{n}^{\pm}$ for $S_{n}$. Further, since $\varphi_{N}^{WKB}$ satisfies the initial conditions in (\ref{schroedinger_eq}), we have $(\varphi-\varphi_{N}^{WKB})(\xi)=0$ and $\varepsilon(\varphi-\varphi_{N}^{WKB})'(\xi)=0$. Thus, Proposition~\ref{proposition_apriori} for $\hat{\varphi}_{0},\hat{\varphi}_{1}=0$ implies (note that $f_{N,\varepsilon}^{\pm}\in C(I)$ and $a\in W^{1,\infty}(I)$) the existence of some $C>0$ independent of $N$ and $\varepsilon$ such that
	\begin{align}\label{wkb_error_1}
		\lVert \varphi-\varphi_{N}^{WKB}\rVert_{L^{\infty}(I)}&\leq \frac{C}{\varepsilon}\lVert \alpha_{N,\varepsilon}\varphi_{N}^{-}f_{N,\varepsilon}^{-}+\beta_{N,\varepsilon}\varphi_{N}^{+}f_{N,\varepsilon}^{+}\rVert_{L^{2}(I)}\nonumber\\
		&\leq \frac{\widetilde{C}}{\varepsilon}\left(\lVert \alpha_{N,\varepsilon}\varphi_{N}^{-}\rVert_{L^{\infty}(I)}\lVert f_{N,\varepsilon}^{-}\rVert_{L^{\infty}(I)}+\lVert \beta_{N,\varepsilon}\varphi_{N}^{+}\rVert_{L^{\infty}(I)}\lVert f_{N,\varepsilon}^{+}\rVert_{L^{\infty}(I)}\right)\Comma
	\end{align}
	where $\widetilde{C}:=\sqrt{\eta-\xi}C$.
	Further, according to Corollary~\ref{corollary_Snk},
	\begin{align}\label{wkb_error_2}
		\lVert f_{N,\varepsilon}^{\pm}\rVert_{L^{\infty}(I)}&\leq 2\varepsilon^{N+1}\lVert S_{0}'\rVert_{L^{\infty}(I)}\lVert S_{N+1}'\rVert_{L^{\infty}(I)}+\sum_{n=2}^{N}\sum_{k=2+N-n}^{N}\varepsilon^{n+k}\lVert S_{n}'\rVert_{L^{\infty}(I)}\lVert S_{k}'\rVert_{L^{\infty}(I)}\nonumber\\
		&\leq \lVert S_{0}'\rVert_{L^{\infty}(G)}^{2}\Bigg(2\varepsilon^{N+1}K_{2}^{N+1}(N+1)^{N+1}+\sum_{n=2}^{N}\sum_{k=2+N-n}^{N}\varepsilon^{n+k}K_{2}^{n+k}n^{n}k^{k}\Bigg)\period
	\end{align}
	Estimate (\ref{wkb_error}) now follows from (\ref{wkb_error_1})-(\ref{wkb_error_2}) by applying Lemma~\ref{lemma_alpha_N_eps_phi_est}. This concludes the proof.
	\end{proof}
	\begin{Remark}\label{remark_wkb_error}
	%
	%
	%
        As a consequence of Theorem~\ref{theorem_wkb_error}, we have that
        \begin{align}
            \lVert\varphi-\varphi_{N}^{WKB}\rVert_{L^{\infty}(I)}=\mathcal{O}(\varepsilon^{N})\Comma\quad \varepsilon\to 0\period
        \end{align}
	\end{Remark}
\subsection{Refined error estimate incorporating quadrature errors}
Theorem~\ref{theorem_wkb_error} yields an explicit (w.r.t.\ $\varepsilon$ and $N$) error estimate for the WKB approximation (\ref{general_wkb_solution}). However, in practice one cannot expect to be able to compute (\ref{general_wkb_solution}) exactly. Indeed, even though for a given function $a$ one can compute the derivatives $(S_{n}^{\pm})'$ exactly through (\ref{S0})-(\ref{Sn}), one still has to deal with the integrals $\int_{\xi}^{x}(S_{n}^{\pm})'\,\mathrm{d}\tau$ in (\ref{Sn_definition}) in order to compute the functions $S_{n}^{\pm}$. For a detailed description of the method we use to approximate these integrals, we refer to Section~\ref{spectral_methods}. 

For now, let us assume we are given numerical approximations $\widetilde{S}_{n}^{\pm}$, $n\in\mathbb{N}_{0}$, of the functions $S_{n}^{\pm}$ that satisfy $\lVert \widetilde{S}_{n}^{\pm}-S_{n}^{\pm}\rVert_{L^{\infty}(I)}\leq e_{n}$ with positive constants $e_{n}$. We then define the corresponding ``perturbed'' WKB approximation as
\begin{align}
	\widetilde{\varphi}_{N}^{WKB}:=\alpha_{N,\varepsilon}\exp\left(\sum_{n=0}^{N}\varepsilon^{n-1}\widetilde{S}_{n}^{-}\right)+\beta_{N,\varepsilon}\exp\left(\sum_{n=0}^{N}\varepsilon^{n-1}\widetilde{S}_{n}^{+}\right)\period\label{general_wkb_solution_perturbed}
\end{align}
Notice that we use here the exact constants $\alpha_{N,\varepsilon}$ and $\beta_{N,\varepsilon}$ as given by formulas (\ref{alpha_N_eps})-(\ref{beta_N_eps}) (since the values $(S_{n}^{\pm})'(\xi)$ are exactly known from (\ref{S0})-(\ref{Sn})).

We are now interested in an error estimate for the perturbed WKB approximation (\ref{general_wkb_solution_perturbed}).
%
%
%
Such an estimate is provided by the following theorem:
\begin{Theorem}\label{theorem_wkb_error_approximate_Sn}
	Let Hypothesis~\ref{hypothesis_A} be satisfied and let $\varphi\in C^{2}(I)$ be the solution of IVP (\ref{schroedinger_eq}). Further assume $\widetilde{S}_{2n}^{\pm}(x)\in \ii\mathbb{R}$, $n\in\mathbb{N}_{0}$, for any $x\in I$. Then, there exist constants $\varepsilon_{0}\in(0,1)$ and $C>0$ independent of $N$ and $\varepsilon$ such that it holds for $\varepsilon\in(0,\varepsilon_{0}]$:
	\begin{align}\label{wkb_error_including_approx}
		\lVert \varphi-\widetilde{\varphi}_{N}^{WKB}\rVert_{L^{\infty}(I)}&\leq \exp\left((\eta-\xi)\lVert S_{0}'\rVert_{L^{\infty}(G)}\sum_{n=0}^{\lfloor \frac{N-1}{2}\rfloor}\varepsilon^{2n}K_{2}^{2n+1}(2n+1)^{2n+1}\right)\nonumber\\
		&\quad\times\Bigg[C\lVert S_{0}'\rVert_{L^{\infty}(G)}^{2}\left(|\varphi_{0}|\,\lVert S_{0}'\rVert_{L^{\infty}(G)}\sum_{n=0}^{N}\varepsilon^{n}K_{2}^{n}n^{n}+|\varphi_{1}|\right)\nonumber\\
		&\quad\quad\quad\times\Bigg(\varepsilon^{N}K_{2}^{N+1}(N+1)^{N+1}+\sum_{n=2}^{N}\sum_{k=2+N-n}^{N}\varepsilon^{n+k-1}K_{2}^{n+k}n^{n}k^{k}\Bigg)\nonumber\\
        &\quad\quad\quad+\left(|\alpha_{N,\varepsilon}|+|\beta_{N,\varepsilon}|\right)\left(\sum_{n=0}^{N}\varepsilon^{n-1}e_{n}\right)\exp\left(\sum_{n=0}^{\lfloor \frac{N-1}{2}\rfloor}\varepsilon^{2n}e_{2n+1}\right)\Bigg]\period
	\end{align}
    For $N=0$ the sums in the exponential functions drop, and for $N<2$ the double sum is dropped.
\end{Theorem}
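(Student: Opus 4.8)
The plan is to compare the perturbed approximation $\widetilde{\varphi}_{N}^{WKB}$ not directly with $\varphi$, but with the exact WKB approximation $\varphi_{N}^{WKB}$ of (\ref{general_wkb_solution}), exploiting that both are built from the \emph{same} constants $\alpha_{N,\varepsilon},\beta_{N,\varepsilon}$. By the triangle inequality,
\[
\lVert \varphi-\widetilde{\varphi}_{N}^{WKB}\rVert_{L^{\infty}(I)}\le \lVert \varphi-\varphi_{N}^{WKB}\rVert_{L^{\infty}(I)}+\lVert \varphi_{N}^{WKB}-\widetilde{\varphi}_{N}^{WKB}\rVert_{L^{\infty}(I)}\period
\]
The first summand is controlled directly by Theorem~\ref{theorem_wkb_error}, which already supplies precisely the first bracketed expression in (\ref{wkb_error_including_approx}), multiplied by the common exponential prefactor. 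Hence the whole task reduces to estimating the second summand, i.e.\ the error committed by replacing the exact $S_{n}^{\pm}$ with the numerical surrogates $\widetilde{S}_{n}^{\pm}$ while keeping $\alpha_{N,\varepsilon},\beta_{N,\varepsilon}$ fixed.

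For this perturbation term I would write, with $\varphi_{N}^{\pm}=\exp(\sum_{n=0}^{N}\varepsilon^{n-1}S_{n}^{\pm})$ and $\widetilde{\varphi}_{N}^{\pm}=\exp(\sum_{n=0}^{N}\varepsilon^{n-1}\widetilde{S}_{n}^{\pm})$,
\[
\varphi_{N}^{WKB}-\widetilde{\varphi}_{N}^{WKB}=\alpha_{N,\varepsilon}\bigl(\varphi_{N}^{-}-\widetilde{\varphi}_{N}^{-}\bigr)+\beta_{N,\varepsilon}\bigl(\varphi_{N}^{+}-\widetilde{\varphi}_{N}^{+}\bigr)\Comma
\]
so that $\lVert\varphi_{N}^{WKB}-\widetilde{\varphi}_{N}^{WKB}\rVert_{L^{\infty}(I)}\le(|\alpha_{N,\varepsilon}|+|\beta_{N,\varepsilon}|)\max_{\pm}\lVert\varphi_{N}^{\pm}-\widetilde{\varphi}_{N}^{\pm}\rVert_{L^{\infty}(I)}$. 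To bound a single difference $e^{P^{\pm}}-e^{\widetilde{P}^{\pm}}$, where $P^{\pm}:=\sum_{n=0}^{N}\varepsilon^{n-1}S_{n}^{\pm}$ and $\widetilde{P}^{\pm}:=\sum_{n=0}^{N}\varepsilon^{n-1}\widetilde{S}_{n}^{\pm}$, I would use the elementary Lipschitz estimate for the complex exponential, $|e^{z}-e^{w}|\le|z-w|\,e^{\max(\operatorname{Re}z,\operatorname{Re}w)}$, which follows from $e^{z}-e^{w}=(z-w)\int_{0}^{1}e^{w+t(z-w)}\,\dd t$.

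It then remains to estimate the two ingredients of this bound. For the increment I would use $|P^{\pm}-\widetilde{P}^{\pm}|\le\sum_{n=0}^{N}\varepsilon^{n-1}\lVert S_{n}^{\pm}-\widetilde{S}_{n}^{\pm}\rVert_{L^{\infty}(I)}\le\sum_{n=0}^{N}\varepsilon^{n-1}e_{n}$, which is exactly the polynomial factor in $e_{n}$ appearing in (\ref{wkb_error_including_approx}). For the exponent I would invoke the parity structure: by (\ref{proposition_S_and_S_tilde_implication1})--(\ref{proposition_S_and_S_tilde_implication2}) the even-indexed $S_{2n}^{\pm}$ are purely imaginary, and by assumption the same holds for $\widetilde{S}_{2n}^{\pm}$, so only the odd-indexed terms contribute to $\operatorname{Re}P^{\pm}$ and $\operatorname{Re}\widetilde{P}^{\pm}$. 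Thus $\operatorname{Re}P^{\pm}=\sum_{n=0}^{\lfloor(N-1)/2\rfloor}\varepsilon^{2n}S_{2n+1}^{\pm}$, which Corollary~\ref{corollary_Snk} bounds by $(\eta-\xi)\lVert S_{0}'\rVert_{L^{\infty}(G)}\sum_{n}\varepsilon^{2n}K_{2}^{2n+1}(2n+1)^{2n+1}$, while $\operatorname{Re}\widetilde{P}^{\pm}\le\operatorname{Re}P^{\pm}+\sum_{n=0}^{\lfloor(N-1)/2\rfloor}\varepsilon^{2n}e_{2n+1}$ because $|\operatorname{Re}(\widetilde{S}_{2n+1}^{\pm}-S_{2n+1}^{\pm})|\le e_{2n+1}$. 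Feeding these into the Lipschitz estimate produces exactly the exponential prefactor $\exp((\eta-\xi)\lVert S_{0}'\rVert_{L^{\infty}(G)}\sum\ldots)$ together with the correction factor $\exp(\sum_{n}\varepsilon^{2n}e_{2n+1})$. Combining everything in the triangle inequality and factoring out the common prefactor yields (\ref{wkb_error_including_approx}).

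The main obstacle is the correct treatment of $\operatorname{Re}\widetilde{P}^{\pm}$: without the hypothesis $\widetilde{S}_{2n}^{\pm}\in\ii\mathbb{R}$, the generally complex quadrature errors in the even, phase-carrying terms would leak into the amplitude and be amplified by the large factor $\varepsilon^{2n-1}$, destroying the clean exponential bound. The assumption forces the even terms to stay purely imaginary, so that only the odd terms — carrying the harmless factor $\varepsilon^{2n}$ — affect the real part, which is precisely what keeps the exponential correction factor $\exp(\sum_{n}\varepsilon^{2n}e_{2n+1})$ under control and bounded as $\varepsilon\to 0$.
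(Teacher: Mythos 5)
Your proposal is correct and follows essentially the same route as the paper's proof: the triangle inequality splitting off $\varphi_{N}^{WKB}$, Theorem~\ref{theorem_wkb_error} for the first term, and for the perturbation term the decomposition with $\alpha_{N,\varepsilon},\beta_{N,\varepsilon}$ followed by the integral-representation (Lipschitz) bound for the difference of exponentials, with the parity argument and the hypothesis $\widetilde{S}_{2n}^{\pm}\in\ii\mathbb{R}$ ensuring only odd-indexed terms enter the real part, estimated via Corollary~\ref{corollary_Snk}. Your pointwise estimate $|e^{z}-e^{w}|\le|z-w|\,e^{\max(\operatorname{Re}z,\operatorname{Re}w)}$ is just a repackaging of the paper's homotopy integral in $t$, so the two arguments coincide in substance.
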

\begin{proof}
\anton{We have that}
	\begin{align}\label{error_dreiecks}
		\lVert \varphi-\widetilde{\varphi}_{N}^{WKB}\rVert_{L^{\infty}(I)}\leq \lVert \varphi-\varphi_{N}^{WKB}\rVert_{L^{\infty}(I)}+\lVert \varphi_{N}^{WKB}-\widetilde{\varphi}_{N}^{WKB}\rVert_{L^{\infty}(I)}\period
	\end{align}
	Now, the first term in (\ref{error_dreiecks}) can be estimated using Theorem~\ref{theorem_wkb_error} and enforces the restriction $\varepsilon\in (0,\varepsilon_{0}]$. For the second term we estimate
	%
    \begin{align}\label{phi_wkb_minus_phi_wkb_perturbed}
		\lVert \varphi_{N}^{WKB}-\widetilde{\varphi}_{N}^{WKB}\rVert_{L^{\infty}(I)}&\leq |\alpha_{N,\varepsilon}|\left\lVert \exp\left(\sum_{n=0}^{N}\varepsilon^{n-1}S_{n}^{-}\right)-\exp\left(\sum_{n=0}^{N}\varepsilon^{n-1}\widetilde{S}_{n}^{-}\right)\right\rVert_{L^{\infty}(I)}\nonumber\\
		&\quad+|\beta_{N,\varepsilon}|\left\lVert \exp\left(\sum_{n=0}^{N}\varepsilon^{n-1}S_{n}^{+}\right)-\exp\left(\sum_{n=0}^{N}\varepsilon^{n-1}\widetilde{S}_{n}^{+}\right)\right\rVert_{L^{\infty}(I)}\period
	\end{align}
    Let us introduce the abbreviation $\Delta S_{n}^{\pm}:=\widetilde{S}_{n}^{\pm}-S_{n}^{\pm}$ and estimate
	\begin{align}\label{exp_quad_error}
		&\left\lVert \exp\left(\sum_{n=0}^{N}\varepsilon^{n-1}S_{n}^{\pm}\right)-\exp\left(\sum_{n=0}^{N}\varepsilon^{n-1}\widetilde{S}_{n}^{\pm}\right)\right\rVert_{L^{\infty}(I)}\nonumber\\
		& = \left\lVert\int_{0}^{1} \frac{\mathrm{d}}{\mathrm{d}t}\exp\left(\sum_{n=0}^{N}\varepsilon^{n-1}\left(S_{n}^{\pm}+t\Delta S_{n}^{\pm}\right)\right)\,\mathrm{d}t\right\rVert_{L^{\infty}(I)}\nonumber\\
		&\leq\left(\int_{0}^{1} \left\lVert\exp\left(\sum_{n=0}^{N}\varepsilon^{n-1}\left(S_{n}^{\pm}+t\Delta S_{n}^{\pm}\right)\right)\right\rVert_{L^{\infty}(I)}\,\mathrm{d}t\right)\left(\sum_{n=0}^{N}\varepsilon^{n-1}\lVert\Delta S_{n}^{\pm}\rVert_{L^{\infty}(I)}\right)\nonumber\\
		&\leq \left(\int_{0}^{1} \exp\left(\sum_{n=0}^{\lfloor \frac{N-1}{2}\rfloor}\varepsilon^{2n}\left(\lVert S_{2n+1}^{\pm}\rVert_{L^{\infty}(I)}+t\lVert\Delta S_{2n+1}^{\pm}\rVert_{L^{\infty}(I)}\right)\right)\,\mathrm{d}t\right)\left(\sum_{n=0}^{N}\varepsilon^{n-1}\lVert\Delta S_{n}^{\pm}\rVert_{L^{\infty}(I)}\right)\nonumber\\
		&\leq \exp\left(\sum_{n=0}^{\lfloor \frac{N-1}{2}\rfloor}\varepsilon^{2n}\left(\lVert S_{2n+1}^{\pm}\rVert_{L^{\infty}(I)}+\lVert\Delta S_{2n+1}^{\pm}\rVert_{L^{\infty}(I)}\right)\right)\left(\sum_{n=0}^{N}\varepsilon^{n-1}\lVert\Delta S_{n}^{\pm}\rVert_{L^{\infty}(I)}\right)\nonumber\\
		&\leq \exp\left((\eta-\xi)\lVert S_{0}'\rVert_{L^{\infty}(G)}\sum_{n=0}^{\lfloor \frac{N-1}{2}\rfloor}\varepsilon^{2n}K_{2}^{2n+1}(2n+1)^{2n+1}\right)\exp\left(\sum_{n=0}^{\lfloor\frac{N-1}{2}\rfloor}\varepsilon^{2n}e_{2n+1}\right)\left(\sum_{n=0}^{N}\varepsilon^{n-1}e_{n}\right)\Comma
	\end{align}
	where we used in the third step that $S_{2n}^{\pm}(x)+t\Delta S_{2n}^{\pm}(x)\in \ii\mathbb{R}$ for every $t\in[0,1]$ and $x\in I$, which is a direct consequence of (\ref{proposition_S_and_S_tilde_implication1}) and the assumption $\widetilde{S}_{2n}^{\pm}(x)\in \ii\mathbb{R}$. Moreover, in the last step we used Corollary~\ref{corollary_Snk}. The claim now follows by 
    combining (\ref{error_dreiecks})-(\ref{exp_quad_error}).
\end{proof}
Let us compare the extended error estimate (\ref{wkb_error_including_approx}) with (\ref{wkb_error}). The new (additional) second term inside the square brackets in (\ref{wkb_error_including_approx}) is due to the perturbed functions $\widetilde{S}_{n}^{\pm}$ and includes the approximation error bounds $e_{n}$. In particular, the factor $\sum_{n=0}^{N}\varepsilon^{n-1}e_{n}$ is rather unfavorable, as it is of order $\mathcal{O}(\varepsilon^{-1})$, as $\varepsilon\to 0$. We note that the appearance of this $\mathcal{O}(\varepsilon^{-1})$-term in estimate (\ref{wkb_error_including_approx}) is strongly related to the appearance of the $\mathcal{O}(\varepsilon^{-1})$-terms in \cite[Theorem 3.1]{Arnold2011WKBBasedSF}, \cite[Theorem 3.2]{Arnold2022WKBmethodFT} and \cite[Eq.\ (35)]{Jahnke2003NumericalIF}. There it implied an upper step size limit $h\leq \bar{h}(\varepsilon)=\varepsilon^{\gamma}$ with some $\gamma\in(0,1)$. Similarly, it would require here some $\varepsilon$-dependent upper bound on the quadrature error $e_{0}$ of $\widetilde{S}_{0}^{\pm}$ \anton{in order to compensate at least the $\mathcal{O}(\varepsilon^{-1})$-error term. In practice this will necessitate to use some finer grid for computing $\widetilde{S}_{0}^{\pm}$, as $\varepsilon$ decreases.} 
We specify this observation in the following remark. 
\begin{Remark}\label{remark_wkb_error_perturbed}
    It is evident from (\ref{alpha_N_eps_est1}) that $\alpha_{N,\varepsilon}=\mathcal{O}(1)$, $\varepsilon\to 0$. The same holds for $\beta_{N,\varepsilon}$. Hence, we see from (\ref{wkb_error_including_approx}) that
	\begin{align}\label{wkb_error_including_approx_order}
		\lVert \varphi-\widetilde{\varphi}_{N}^{WKB}\rVert_{L^{\infty}(I)} = \mathcal{O}(\varepsilon^{N})+\sum_{n=0}^{N}\mathcal{O}(\varepsilon^{n-1})e_{n}\Comma\quad \varepsilon\to 0\period
	\end{align}
	Thus, asymptotically, as $\varepsilon\to 0$, the approximation error of $\widetilde{S}_{0}^{\pm}$ has the biggest impact on the overall error since it is multiplied by a factor $\mathcal{O}(\varepsilon^{-1})$. In order to recover an overall $\mathcal{O}(\varepsilon^{N})$ error behavior, as in Theorem~\ref{theorem_wkb_error}, one should hence aim for highly accurate approximations of the functions $S_{n}^{\pm}$, with an $\varepsilon$-dependent error order of at most $e_{n}=\mathcal{O}(\varepsilon^{N-n+1})$.  
\end{Remark}
%
\section{Computation of the WKB approximation}\label{section_computation_of_the_WKB_approximation}
In this section we present the methods we use to compute the (perturbed) WKB approximation (\ref{general_wkb_solution}), (\ref{general_wkb_solution_perturbed}). This process can be divided into two steps. First, the computation of the functions $S_{n}$. Second, an adequate truncation of the asymptotic series (\ref{wkb_S}).
\subsection{Computation of the functions $S_{n}$}
\label{spectral_methods}
The computation of the functions $S_{n}$ relies on recurrence relation (\ref{S0})-(\ref{Sn}) as well as on definition (\ref{Sn_definition}). Since the latter involves the evaluation of an integral, one cannot expect to be able to compute $S_{n}$ exactly, in general. Consequently, we will instead compute approximations $\widetilde{S}_{n}\approx S_{n}=\int_{\xi}^{x}S_{n}'\,\mathrm{d}\tau$ which satisfy the assumption $\widetilde{S}_{2n}(x)\in \ii\mathbb{R}$, $n\in\mathbb{N}_{0}$, such that the resulting error for the corresponding perturbed WKB approximation can be controlled by Theorem~\ref{theorem_wkb_error_approximate_Sn}.

As the first step, we compute the derivatives $S_{n}'$ through (\ref{S0})-(\ref{Sn}) exactly, employing symbolic computations\footnote{As an alternative to (\ref{S0})-(\ref{Sn}), in \cite{Robnik2000SomePO} the authors established an almost explicit formula for the derivatives $S_{n}'$, depending on $a$ and its derivatives $a',\dots,a^{(n)}$. Although not used here, this approach may prove advantageous with regard to the computational time.}. Secondly, we employ a highly accurate quadrature for approximating the integral in (\ref{Sn_definition}). For this, we use the well-known Clenshaw-Curtis algorithm \cite{Clenshaw1960AMF}, which we shall briefly explain in the following.

The basic idea of Clenshaw-Curtis quadrature is to expand the integrand $f$ in terms of Chebyshev polynomials, the integrals of which are known. More precisely, one considers a truncated Chebyshev series for the integrand, i.e.,\
%
$f(l)\approx \sum_{r=0}^{M}a_r T_r(l)$, $l\in[-1,1]$,
where $T_r(l)=\cos(r\arccos(l))$, $r\in\mathbb{N}_{0}$, are the Chebyshev polynomials. Here, the spectral coefficients $a_r$ are determined with a collocation method at the Chebyshev collocation points $l_k=\cos(k\pi/M)$, $k=0,\dots,M$, by solving the $M+1$ equations
%
$f(l_k)=\sum_{r=0}^{M}a_{r}\cos\left(\frac{rk\pi}{M}\right)$
for the $a_r$, $r=0,\dots,M$. Therefore, the spectral coefficients can be computed by the discrete cosine transformation (DCT) of the function $f$ sampled at the collocations points. We note that the DCT is related to the discrete Fourier transform and can be computed efficiently using the fast Fourier transform algorithm after some preprocessing (e.g., see \cite[Chap.\ 8]{Trefethen2000SpectralMI}).

Then, the antiderivative of $f$ can be approximated again by a Chebyshev sum,
\begin{align}\label{clencurt_integral_br}
    \int_{-1}^{l}f(\tau)\,\mathrm{d}\tau \approx \sum_{r=0}^M b_{r} T_{r}(l)\Comma  
\end{align}
where the coefficients $b_{r}$ are related to the $a_{r}$, see \cite{Clenshaw1960AMF} for the detailed formulas. In \cite{Chawla1968ErrorEF} it was shown that the Clenshaw-Curtis method approximates integrals of analytic functions with spectral accuracy, i.e., the numerical error  \anton{decreases} exponentially with the number of modes $M$.

An integration over the interval $x\in[\xi,\eta]$ is realized by mapping $x=\eta(1+l)/2+\xi(1-l)/2$, $l\in[-1,1]$ to the interval $[-1,1]$. Thus, by sampling the derivatives $S_{n}'$ at the transformed Chebyshev points $x_{k}$, $k=0,\dots,M$ in the interval $I=[\xi,\eta]$, we obtain the approximations $\widetilde{S}_{n}(x_{k})\approx S_{n}(x_{k})$. Notably, the coefficients $b_{r}$ are such that the r.h.s.\ of (\ref{clencurt_integral_br}) vanishes at $l=-1$, implying $\widetilde{S}_{n}(\xi)=0$. Hence, the perturbed WKB approximation (\ref{general_wkb_solution_perturbed}) satisfies the first initial condition in (\ref{schroedinger_eq}), namely, $\widetilde{\varphi}_{N}^{WKB}(\xi)=\alpha_{N,\varepsilon}+\beta_{N,\varepsilon}=\varphi_{0}$. Finally, it is worth mentioning that when employing the Clenshaw-Curtis algorithm for the integrals in (\ref{Sn_definition}), it follows that $\widetilde{S}_{2n}(x_{k})\in \ii\mathbb{R}$. As a consequence, the error of the corresponding perturbed WKB approximation (\ref{general_wkb_solution_perturbed}) can be controlled with the aid of Theorem~\ref{theorem_wkb_error_approximate_Sn}.

\medskip
We note that an alternative and efficient way of approximating the functions $S_{n}$ can be realized without the need for symbolic computation of the derivatives $S_{n}'$. Indeed, one can instead employ a spectral method to perform the differentiation of the predecessor $S_{n-1}'$ in the recursion (\ref{Sn}). For instance, by using the $(M+1)\times(M+1)$ Chebyshev differentiation matrices $\mathbf{D}_{M}$ as described in \cite[Chap.\ 6]{Trefethen2000SpectralMI}, one can efficiently approximate the derivative of a function at Chebyshev grid points $l_{k}\in[-1,1]$, $k=0,1,\dots,M$. Thus, to approximate the derivative of a function sampled at transformed Chebyshev points $x_{k}\in[\xi,\eta]$, it is necessary to use the scaled matrix $\widetilde{\mathbf{D}}_{M}:=\frac{2}{\eta-\xi}\mathbf{D}_{M}$.
Following recurrence relation (\ref{S0})-(\ref{Sn}), we can therefore approximate the derivatives $S_{n}'$ sampled at Chebyshev points $x_{k}$ through the following pointwise definition on the grid:
\begin{align}
	\bar{S}_{1}^{\prime}(x_{k})&:=-\frac{\sum_{l=0}^{M}(\widetilde{\mathbf{D}}_{M})_{k+1,l+1}S_{0}^{\prime}(x_{l})}{2S_{0}^{\prime}(x_{k})}\Comma\label{S_1_approx}\\
	\bar{S}_{n}^{\prime}(x_{k})&:=-\frac{\sum_{j=1}^{n-1}\bar{S}_{j}^{\prime}(x_{k})\bar{S}_{n-j}^{\prime}(x_{k})+\sum_{l=0}^{M}(\widetilde{\mathbf{D}}_{M})_{k+1,l+1}\bar{S}_{n-1}^{\prime}(x_{l})}{2S_{0}^{\prime}(x_{k})}\Comma\quad  n\geq 2\Comma\label{S_n_approx}
\end{align}
for $k=0,\dots,M$. One then obtains approximations $\widetilde{S}_{n}(x_{k})\approx S_{n}(x_{k})$ by employing the Clenshaw-Curtis algorithm using the approximations $\bar{S}_{n}^{\prime}(x_{k})\approx S_{n}^{\prime}(x_{k})$, $k=0,\dots,M$.

However, note that approximating $S_{n}'$ using (\ref{S_1_approx})-(\ref{S_n_approx}) can lead to a rapid accumulation of errors, as repeated numerical differentiation is intrinsically unstable. The reason for this behavior lies in the ill-conditioned Chebyshev differentiation matrices $\mathbf{D}_{M}$. It is known that the condition number of these matrices is of order $\mathcal{O}(M^2)$ (e.g., see \cite{Breuer1992OnTE,Funaro1987APM}). In a finite precision approach this leads to a big loss, which means that 
in each application of the recurrence relation 
\anton{we lose a finite amount of accuracy in the computation of} the $S_n'$ \anton{ (see Figure \ref{fig:diff-matrix} for two examples).} Consequently, \markus{it can be recommended to employ} this approach \markus{only} for small values of $N$.
\begin{figure}
	\centering
	\includegraphics[scale=0.75]{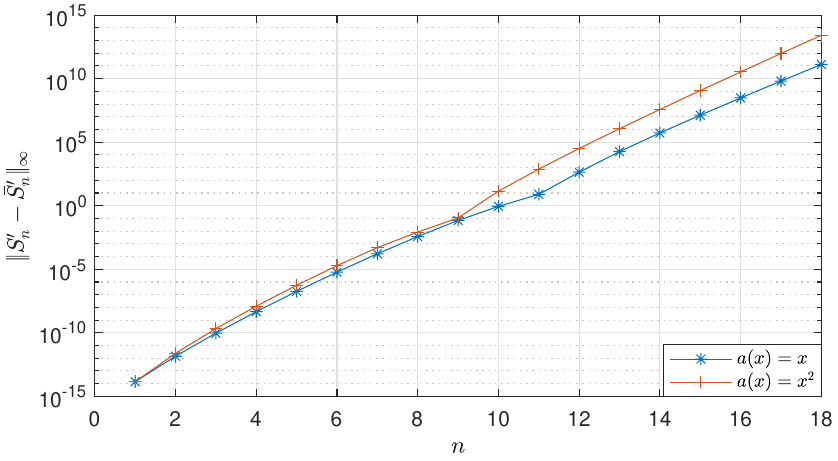}
	\caption{\anton{$L^{\infty}(I)$-norm of the error of the approximation $\bar{S}_{n}^{\prime}$ for the examples $a(x)=x$ and $a(x)=x^{2}$ on the interval $I=[1,2]$. Here, we set $M=20$.}}
	\label{fig:diff-matrix}
\end{figure}
\subsection{Truncation of the WKB series}\label{truncation_strategy}
When truncating the asymptotic series
\begin{align}
    f\sim\sum_{n=0}^{\infty}\varepsilon^{n}f_{n}\Comma \quad \varepsilon\to 0
\end{align}
after some finite order $N$, one would like to analyze the difference $f-\sum_{n=0}^{N}\varepsilon^{n}f_{n}$. But since the function $S$ in (\ref{WKB-ansatz})-(\ref{wkb_S}) remains unknown, we shall investigate the numerical error of the WKB approximation, 
%
%
as started in Section~\ref{section_error_analysis}. 
%
%

Recall that for a fixed $N\geq 0$, Theorem~\ref{theorem_wkb_error} guarantees that $\lVert \varphi-\varphi_{N}^{WKB}\rVert_{L^{\infty}(I)}=\mathcal{O}(\varepsilon^{N})$ as $\varepsilon \to 0$, see also Remark~\ref{remark_wkb_error}. In practical applications, however, the situation is exactly the opposite, namely, the small parameter $\varepsilon$ is fixed and $N$ can be chosen freely. Note also that just including more terms into the series (\ref{wkb_S}) does not necessarily reduce the error of the WKB approximation, simply since the asymptotic series is typically divergent. Hence the question arises which choice of $N$ will minimize $\lVert \varphi-\varphi_{N}^{WKB}\rVert_{L^{\infty}(I)}$, often referred to as \textit{optimal truncation}. In this context, we denote by $N_{opt}=N_{opt}(\varepsilon):=\operatorname{argmin}_{N\in \mathbb{N}_{0}}\lVert \varphi-\varphi_{N}^{WKB}\rVert_{L^{\infty}(I)}$ the \textit{optimal truncation order}. In general, an \textit{optimally truncated} asymptotic series is sometimes referred to as \textit{superasymptotics} (e.g., see \cite{Boyd1999TheDI}). The corresponding error of an optimally truncated series is then typically of the form $\sim \exp(-c/\varepsilon)$, as $\varepsilon\to 0$, with some constant $c>0$. 

In practice, a useful heuristic for finding the optimal truncation order 
for a fixed $\varepsilon$ is given in \cite{Boyd1999TheDI}. It suggests that 
it can be obtained by truncating the asymptotic series before its smallest term
.
In our case, we would hence have to find the minimizer $N_{heu}=N_{heu}(\varepsilon)$ of $n\mapsto\varepsilon^{n}\lVert S_{n+1} \rVert_{L^{\infty}(I)}$. This can either be found by ``brute force'', comparing the size of each term up to some prescribed maximal order $N_{max}$, or by utilizing Corollary~\ref{corollary_Snk} to (roughly) predict $N_{heu}$. Indeed, for any $N\in\mathbb{N}_{0}$, estimate (\ref{Sn_est}) implies
\begin{align}\label{Sn_est_epsi}
	\varepsilon^{N}\lVert S_{N+1}\rVert_{L^{\infty}(I)}\leq (\eta-\xi)\lVert S_{0}'\rVert_{L^{\infty}(G)}\varepsilon^{N}K_{2}^{N+1}(N+1)^{N+1}\period
\end{align}
Treating $N$ as a continuous variable \anton{for the moment}, we find the minimum of
\begin{align}
	g(N):=\ln\left(\varepsilon^{N}K_{2}^{N+1}(N+1)^{N+1}\right)
\end{align}
at
\begin{align}\label{hat_N_heu}
	\widehat{N}_{heu}=\widehat{N}_{heu}(\varepsilon)=\frac{1}{\mathrm{e}K_{2}\varepsilon}-1\period
\end{align}	
Hence, the minimum of the right-hand side of (\ref{Sn_est_epsi}) is
\begin{align}
	(\eta-\xi)\lVert S_{0}'\rVert_{L^{\infty}(G)}\exp\left(g(\widehat{N}_{heu})\right)=\frac{(\eta-\xi)\lVert S_{0}'\rVert_{L^{\infty}(G)}}{\varepsilon}\exp\left(-\frac{1}{\mathrm{e}K_2\varepsilon}\right)\period
\end{align}
So, the first term of the remainder of the asymptotic series appearing in the WKB-ansatz (\ref{WKB-ansatz})-(\ref{wkb_S}), truncated at the nearest integer value to $\widehat{N}_{heu}$, is exponentially small w.r.t.\ $\varepsilon$. Recalling that the term $\exp(g(N))=\varepsilon^{N}K_{2}^{N+1}(N+1)^{N+1}$ also appears in estimate (\ref{wkb_error}), we therefore might also expect the error $\lVert \varphi-\varphi_{N}^{WKB}\rVert_{L^{\infty}(I)}$ to be exponentially small w.r.t.\ $\varepsilon$, if $N$ is chosen adequately. Indeed, this is guaranteed by the following corollary of Theorem~\ref{theorem_wkb_error}.
\begin{Corollary}\label{corollary_exp_small_error}
    Let Hypothesis~\ref{hypothesis_A} be satisfied and let $\varphi\in C^{2}(I)$ be the solution of IVP (\ref{schroedinger_eq}). Then there exist $\tilde{\varepsilon}_{0}\in(0,1)$ and $N=N(\varepsilon)\in \mathbb{N}$ such that it holds for $\varepsilon\in(0,\tilde{\varepsilon}_{0}]$:
	%
    \begin{align}\label{exp_small_error}
		\lVert \varphi-\varphi_{N}^{WKB}\rVert_{L^{\infty}(I)} \leq 
        \anton{ C\exp\left(-\frac{r}{\varepsilon}\right) } \Comma
	\end{align}
    with constants $C,r>0$ independent of $\varepsilon$.
\end{Corollary}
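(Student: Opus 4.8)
The plan is to substitute the explicit choice $N=N(\varepsilon):=\lfloor \widehat{N}_{heu}(\varepsilon)\rfloor\sim (eK_2\varepsilon)^{-1}$ from \eqref{hat_N_heu} into the error bound \eqref{wkb_error} of Theorem~\ref{theorem_wkb_error}, and to show that with this choice the prefactor sum and the exponential factor stay bounded (indeed $\mathcal{O}(1)$ up to the fixed initial data $\varphi_0,\varphi_1$), while the last factor carries the exponentially small contribution predicted by the heuristic, namely $\sim \varepsilon^{-1}\exp(-(eK_2\varepsilon)^{-1})$. Throughout I abbreviate $q:=\varepsilon K_2$ and $m^{\ast}:=(eK_2\varepsilon)^{-1}$; since $N\le \widehat{N}_{heu}=m^{\ast}-1$, one has $N+1\le m^{\ast}$ and $N+1\ge m^{\ast}-1$ for $\varepsilon$ small.

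The single elementary estimate driving everything is that for every integer $1\le n\le m^{\ast}$ one has $qn\le 1/e$, hence
\begin{align}
	\varepsilon^{n}K_2^{n}n^{n}=(qn)^{n}\le e^{-n}.
\end{align}
Applied termwise, this bounds the prefactor sum by $\sum_{n=0}^{N}\varepsilon^{n}K_2^{n}n^{n}\le 1+\sum_{n\ge 1}e^{-n}=\mathcal{O}(1)$. For the exponential factor I would write its exponent summands as $u_n:=\varepsilon^{2n}K_2^{2n+1}(2n+1)^{2n+1}=K_2\,(2n+1)\,(q(2n+1))^{2n}$ with $u_0=K_2$, and use $(q(2n+1))^{2n}\le e^{-2n}$ to get $u_n\le K_2(2n+1)e^{-2n}$; summing this geometric-type series gives $\sum_{n=0}^{\lfloor (N-1)/2\rfloor}u_n\le C$, so the whole exponential factor in \eqref{wkb_error} is bounded by a constant independent of $\varepsilon$. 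This is the step I expect to be the main obstacle: a crude ``number of terms times maximum'' bound would only yield $\mathcal{O}(\varepsilon^{-1})$ in the exponent and hence an exponentially \emph{large} factor; the point is that, because we truncate strictly below the minimising index $m^{\ast}$, the summands actually decay and the sum is controlled by its first term $u_0=K_2$.

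It remains to treat the third factor of \eqref{wkb_error}. For the leading term I use $\varepsilon^{K}K_2^{N+1}(N+1)^{N+1}=\varepsilon^{-1}(q(N+1))^{N+1}\le \varepsilon^{-1}e^{-(N+1)}\le e\,\varepsilon^{-1}\exp(-\tfrac{1}{eK_2\varepsilon})$, where the last step uses $N+1\ge m^{\ast}-1$. For the double sum I would exploit that the summation range forces $n+k\ge N+2$ while $n,k\le N\le m^{\ast}$; writing each summand as $\varepsilon^{-1}(qn)^{n}(qk)^{k}$ and applying the elementary estimate twice bounds it by $\varepsilon^{-1}e^{-(n+k)}\le \varepsilon^{-1}e^{-(N+2)}$. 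Since there are $\tfrac{(N-1)N}{2}=\mathcal{O}(N^2)$ terms and $N\sim\varepsilon^{-1}$, the double sum is $\mathcal{O}(\varepsilon^{-3})\exp(-\tfrac{1}{eK_2\varepsilon})$, of the same exponential order as the leading term.

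Collecting the three factors, Theorem~\ref{theorem_wkb_error} then yields $\lVert\varphi-\varphi_N^{WKB}\rVert_{L^{\infty}(I)}\le C\,\varepsilon^{-3}\exp(-\tfrac{1}{eK_2\varepsilon})$ for $\varepsilon\in(0,\tilde{\varepsilon}_0]$. Finally I would absorb the algebraic prefactor into the exponential: for any fixed $0<r<(eK_2)^{-1}$ — for instance $r:=(2eK_2)^{-1}$ — the quantity $\varepsilon^{-3}\exp\!\big(-((eK_2)^{-1}-r)/\varepsilon\big)$ tends to $0$ as $\varepsilon\to 0$ and is therefore bounded, which delivers the claimed form \eqref{exp_small_error} with this $r$ and a suitable constant $C$.
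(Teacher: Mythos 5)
Your proof is correct and takes essentially the same route as the paper's: truncate at $N\sim(\mathrm{e}K_2\varepsilon)^{-1}$ (the paper keeps a free parameter $c\in[\mathrm{e}K_2\tilde{\varepsilon}_0,\mathrm{e})$ in $N=\lfloor c/(\mathrm{e}K_2\varepsilon)\rfloor-1$; your choice corresponds to $c=1$), control the prefactor sum and the sum in the exponential by convergent geometric-type series — precisely because the truncation stays below the minimizing index — then bound the last factor by an algebraic power of $\varepsilon^{-1}$ times $\exp(-\mathrm{const}/\varepsilon)$, and finally absorb the algebraic prefactor by halving the rate, exactly as the paper does with $r=\tilde{r}/2$. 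The only cosmetic difference is your double-sum bound (maximal term times $\mathcal{O}(N^{2})$ terms, giving $\varepsilon^{-3}$) versus the paper's geometric factoring (giving $\varepsilon^{-2}$), which is immaterial since the prefactor is absorbed anyway.
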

\begin{proof}
    We prove estimate (\ref{exp_small_error}) by applying Theorem~\ref{theorem_wkb_error} for a specific choice of $N=N(\varepsilon)$. First, choose $0<\tilde{\varepsilon}_{0}<\min(\varepsilon_{0},\frac{1}{K_{2}})$ with $\varepsilon_{0}\in(0,1)$ being the constant from Theorem~\ref{theorem_wkb_error} and $K_{2}$ from Corollary~\ref{corollary_Snk}. Then there exists some constant $c\in[\ee K_{2}\tilde{\varepsilon}_{0},\ee)$ implying that $N:=\lfloor \frac{c}{\ee K_{2} \varepsilon}\rfloor-1\geq 0$ for any $\varepsilon\in(0,\tilde{\varepsilon}_{0}]$. 
    The idea is now to majorize, for this choice of $N$, several sums in (\ref{wkb_error}) by convergent geometric series. First, we have
	\begin{align}\label{geom_series_1}
		\sum_{n=0}^{N}(\varepsilon K_{2} n)^{n}\leq \sum_{n=0}^{N}(\varepsilon K_{2} (N+1))^{n}\leq \sum_{n=0}^{\infty}\left(\frac{c}{\ee}\right)^{n}=\frac{1}{1-\frac{c}{\ee}}\Comma
	\end{align}
	where we used $\varepsilon K_{2} (N+1)\leq\frac{c}{\ee}$. Similarly, we get
	\begin{align}\label{geom_series_2}
		\sum_{n=0}^{\lfloor \frac{N-1}{2}\rfloor}\varepsilon^{2n} (K_{2} (2n+1))^{2n+1}&\leq K_{2}\sum_{n=0}^{\lfloor \frac{N-1}{2}\rfloor}(\varepsilon K_{2} (N+1))^{2n}(2n+1)\nonumber\\
		&\leq K_{2}\sum_{n=0}^{N}\left(\frac{c}{\ee}\right)^{n}(n+1)\nonumber\\
		&\leq K_{2}\left(\sum_{n=0}^{\infty}\left(\frac{c}{\ee}\right)^{n}n+\sum_{n=0}^{\infty}\left(\frac{c}{\ee}\right)^{n}\right)\nonumber\\
		&=K_{2}\frac{\ee^{2}}{(\ee-c)^{2}}\Comma
	\end{align}
	where we used the geometric series variant $\sum_{n=0}^{\infty}q^{n}n=\frac{q}{(1-q)^{2}}$ for any $q\in\mathbb{R}$ with $|q|<1$. At this point, Theorem~\ref{theorem_wkb_error} and (\ref{geom_series_1})-(\ref{geom_series_2}) imply for $\varepsilon\in(0,\tilde{\varepsilon}_{0}]$
	\begin{align}\label{optimal_error_interim_result}
		\lVert \varphi-\varphi_{N}^{WKB}\rVert_{L^{\infty}(I)} &\leq C\varepsilon^{N}K_{2}^{N+1}(N+1)^{N+1}+C\sum_{n=2}^{N}\sum_{k=2+N-n}^{N}\varepsilon^{n+k-1}K_{2}^{n+k}n^{n}k^{k}\Comma
	\end{align}
	where $C>0$ is some constant independent of $\varepsilon$.
	%
	Now, for the first term in (\ref{optimal_error_interim_result}) we have that
	\begin{align}\label{optimal_error_first_term}
		\varepsilon^{N}K_{2}^{N+1}(N+1)^{N+1}\leq \frac{1}{\varepsilon}\left(\frac{c}{\ee}\right)^{\left\lfloor \frac{c}{\ee K_{2}\varepsilon}\right\rfloor}\leq\frac{\ee}{c\varepsilon}\left(\frac{c}{\ee}\right)^{\frac{c}{\ee K_{2}\varepsilon}} = \frac{\ee}{c\varepsilon}\exp\left(-\frac{\anton{\tilde r}}{\varepsilon}\right)\Comma
	\end{align}
    with $\anton{\tilde r}:=\frac{c\ln(\ee/c)}{\ee K_{2}}>0$. Finally, the second term in (\ref{optimal_error_interim_result}) can be estimated as follows:
    \begin{align}\label{optimal_error_second_term}
        \sum_{n=2}^{N}\sum_{k=2+N-n}^{N}\varepsilon^{n+k-1}K_{2}^{n+k}n^{n}k^{k}&\leq \frac{1}{\varepsilon}\sum_{n=2}^{N}\sum_{k=2+N-n}^{N}(\varepsilon K_{2}N)^{n}(\varepsilon K_{2}N)^{k}\nonumber\\
        &=\frac{1}{\varepsilon}(\varepsilon K_{2}N)^{N+2}\sum_{n=0}^{N-2}\sum_{k=0}^{n}(\varepsilon K_{2}N)^{k}\nonumber\\
        &\leq \frac{1}{\varepsilon}(\varepsilon K_{2}N)^{N+2}\sum_{n=0}^{N-2}\sum_{k=0}^{\infty}\left(\frac{c}{\ee}\right)^{k}\nonumber\\
        &=\frac{1}{\varepsilon}(\varepsilon K_{2}N)^{N+2}\frac{N-1}{1-\frac{c}{\ee}}\nonumber\\
        &\leq \frac{K_{2}}{1-\frac{c}{\ee}}\left(\frac{c}{\ee K_{2}\varepsilon}\right)^{2}\left(\frac{c}{\ee}\right)^{\left\lfloor \frac{c}{\ee K_{2}\varepsilon}\right\rfloor}\nonumber\\
        &\leq \frac{c}{(\ee-c)K_{2}\varepsilon^{2}}\left(\frac{c}{\ee}\right)^{\frac{c}{\ee K_{2}\varepsilon}}\nonumber\\
        &= \frac{c}{(\ee-c)K_{2}\varepsilon^{2}}\exp\left(-\frac{\anton{\tilde r}}{\varepsilon}\right)\period
    \end{align}
    We observe that the r.h.s.\ of (\ref{optimal_error_first_term}) can be bounded by the r.h.s.\ of (\ref{optimal_error_second_term}) (up to a multiplicative constant) for $\varepsilon\in (0,\tilde{\varepsilon}_{0}]$. Thus, the claim follows \anton{with $r:=\tilde r/2$ and adapting $C$}.
    \end{proof}
\begin{Remark}\label{remark_optimal_error}
	We note that the specific value $N$ from the proof of Corollary~\ref{corollary_exp_small_error} is not necessarily equal to the optimal truncation order $N_{opt}$. However, as a consequence of Corollary~\ref{corollary_exp_small_error}, and by the definition of $N_{opt}$, we conclude that
	\begin{align}
		\lVert \varphi-\varphi_{N_{opt}}^{WKB}\rVert_{L^{\infty}(I)} =
    \anton{\mathcal{O}(\exp(-r/\varepsilon))}
    \Comma\quad \varepsilon\to 0\Comma\quad r>0\period
	\end{align}
\end{Remark}
Finally, we note that, apart from $N_{heu}$ and $\widehat{N}_{heu}$, another option for predicting the optimal truncation order $N_{opt}$ is to find the minimizer of error estimate (\ref{wkb_error}) (for $\varepsilon$ fixed), say $\widehat{N}_{opt}=\widehat{N}_{opt}(\varepsilon)$, although this rather complicated expression can only be minimized numerically by brute force.

At this point, it seems convenient to summarize the notations and meanings of the different mentioned truncation orders which aim to estimate $N_{opt}$ -- see Table~\ref{table:optimal_notation}. In the next section we will compare results for each truncation order from Table~\ref{table:optimal_notation}, since it is not clear a priori which of these orders provides the most accurate prediction of $N_{opt}$. Nonetheless, let us note that in our experiments $N_{opt}$, $\widehat{N}_{opt}$, and $N_{heu}$ can only be determined by brute force, while $\widehat{N}_{heu}$ is given explicitly by formula (\ref{hat_N_heu}).
\begin{table}
	\centering
	\begin{tabular}{ l l} 
        \toprule
		$N_{opt}$ & minimizer of $\lVert \varphi-\varphi_{N}^{WKB}\rVert_{L^{\infty}(I)}$ (optimal truncation order)\\
        $\widehat{N}_{opt}$ & minimizer of error estimate (\ref{wkb_error}) (prediction of $N_{opt}$)\\
		${N}_{heu}$ & minimizer of $\varepsilon^{N}\lVert S_{N+1}\rVert_{L^{\infty}(I)}$ (heuristic prediction of $N_{opt}$)\\  
		$\widehat{N}_{heu}$ & minimizer of the r.h.s.\ of (\ref{Sn_est_epsi}) (prediction of $N_{heu}$)\\ \bottomrule
	\end{tabular}
	\caption{Terminology for the different truncation orders mentioned in Section~\ref{section_computation_of_the_WKB_approximation}. The numbers $\widehat{N}_{opt}$ and $\widehat{N}_{heu}$ are predictions for $N_{opt}$ and ${N}_{heu}$ by means of (\ref{wkb_error}) and (\ref{Sn_est_epsi}), respectively. \anton{(For numerical values in two concrete examples see Figures \ref{plot:airy_N_opt_opt_err} and \ref{plot:third_example_N_opt}.)}}	
	\label{table:optimal_notation}
\end{table}
\section{Numerical simulations}\label{section_numerical_simulations}
%
In this section we present several numerical simulations to illustrate some of the theoretical results we derived in Section~\ref{section_error_analysis}. To this end, we will compute the (perturbed) WKB approximation as described in Section~\ref{spectral_methods}. That is, the functions $S_{n}'$ are pre-computed symbolically and are then integrated numerically using the Clenshaw-Curtis algorithm based on a Chebyshev grid with $M+1$ grid points, where $M$  will be specified later. All computations are carried out using \MATLAB version 9.13.0.2049777 (R2022b). Further, since we are dealing with very small errors for the WKB approximation, especially when investigating the optimal truncation order, we use the
Advanpix Multiprecision Computing Toolbox for \MATLAB \cite{Advanpix} with quadruple-precision to avoid roundoff errors.
\subsection{Example 1: Airy equation}
Consider the initial value problem
\begin{align}\label{eqn:Airy}
	\begin{cases}
		\varepsilon^{2}\varphi^{\prime\prime  }(x) + x \varphi(x) = 0 \Comma \quad x \in [1, 2] \Comma \\
		\varphi(1) = \Ai(-\frac{1}{\varepsilon^{2/3}}) + \ii \Bi(-\frac{1}{\varepsilon^{2/3}}) \Comma \\
		\varepsilon\varphi^{\prime}(1) = -\varepsilon^{1/3}\left(\Ai^{\prime}(-\frac{1}{\varepsilon^{2/3}}) + \ii \Bi^{\prime}(-\frac{1}{\varepsilon^{2/3}})\right) \Comma
	\end{cases}
\end{align}
where the exact solution is given by
\begin{align}\label{airy_exact_sol}
	\varphi_{exact}(x) = \Ai(-\frac{x}{\varepsilon^{2/3}}) + \ii \Bi(-\frac{x}{\varepsilon^{2/3}}) \period
\end{align}
Here, $\Ai$ and $\Bi$ denote the Airy functions of first and second kind, respectively (e.g., see \cite[Chap.\ 9]{Olver2010NISTHO}). Note that for this example, where $a(x)=x$, the derivatives $S_{n}^{\prime}$ are given by powers of $x$ (up to a constant factor). Hence, the functions $S_{n}^{\pm}$ can be computed exactly from (\ref{Sn_definition}); however, we shall use them here only as reference solutions for the approximations $\widetilde{S}_{n}^{\pm}$. Indeed, for a fixed number $M+1$ of Chebyshev grid points, we are then able to compute explicitly the approximation error $\lVert S_{n}^{\pm}-\widetilde{S}_{n}^{\pm}\rVert_{L^{\infty}(I)}=:e_{n}$. Since $\widetilde{S}_{n}^{\pm}$ is only available at the grid points, we actually compute the discrete analog of this norm.

The left panel of Figure~\ref{plot:airy_sol_plot} shows the real part of $\varphi_{exact}$ for the choice $\varepsilon=2^{-8}$, which illustrates well the highly oscillatory behavior of the solution.
\begin{figure}
	\centering
    \includegraphics[scale=0.75]{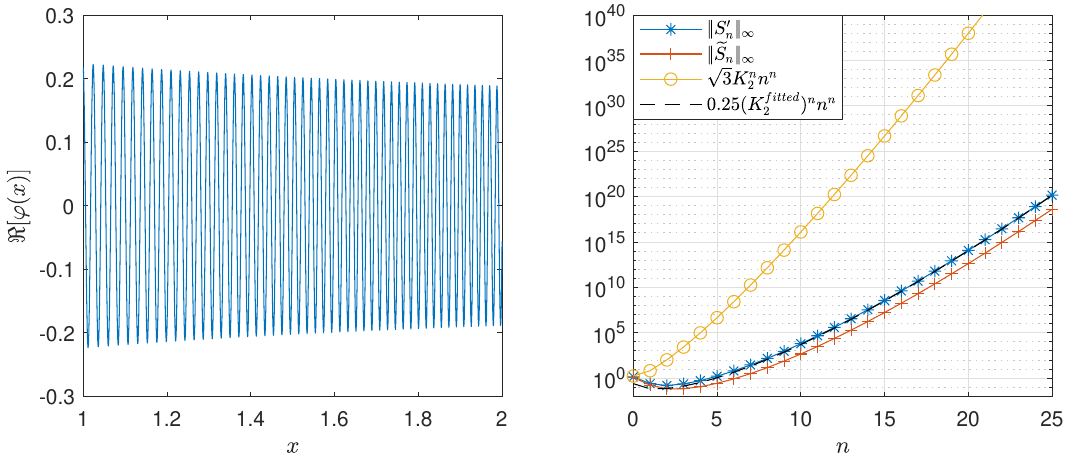}
	\caption{Left: Real part of the exact solution (\ref{airy_exact_sol}) of IVP (\ref{eqn:Airy}) for the choice $\varepsilon=2^{-8}$. Right: $L^{\infty}(I)$-norm of $S_{n}'$ and $\widetilde{S}_{n}$ as functions of $n$ for the example $a(x)=x$ on the interval $\anton{I=}[1,2]$.}
	\label{plot:airy_sol_plot}
\end{figure}
Let us first investigate numerically the result from Corollary~\ref{corollary_Snk}. For this, let us compute a constant $K_{2}$, as indicated by the proof of Corollary~\ref{corollary_Snk} and Remark~\ref{remark:K_2_trade_off}. Indeed, by using the minimization strategy from (\ref{G_minimization}), we find that (note that here $S_{0}'(z)=\pm\operatorname{i}\sqrt{z}$; \anton{$\delta_{opt}=\frac{\sqrt{97}-7}{6}\approx 0.4748$) }
%
\begin{align}
    K_{2}=\frac{\ee}{2\ee-1}
    \min_{0<\delta\leq 1}\anton{\frac{\sqrt{2+\delta}}{\delta(1-\delta)}=\frac{\ee}{2\ee-1}\frac{\sqrt{2+\delta_{opt}}}{\delta_{opt}(1-\delta_{opt})}\approx 3.8653 }
\end{align}
is a suitable constant within the context of Corollary~\ref{corollary_Snk}. In the right panel of Figure~\ref{plot:airy_sol_plot}, we present the $L^{\infty}(I)$-norms of the functions $S_{n}'$ and the approximations $\widetilde{S}_{n}$ when using $M=25$, along with the theoretical bound (\ref{Snk_est}) on $\lVert S_{n}'\rVert_{L^{\infty}(I)}$. We observe that the true norms consistently remain below the theoretical bound. Additionally, we include as a dashed line the theoretical bound (\ref{Snk_est}) when replacing $K_{2}$ and $\lVert S_{0}'\rVert_{L^{\infty}(G)}$ by the \anton{experimentally} fitted values $K_{2}^{fitted}=10/37\approx 0.27$ and $0.25$, respectively.\footnote{\anton{Dividing \eqref{Snk_est} by $n^n$ and taking the logarithm we used a linear approximation to obtain $K_2^{fitted}$.}} We observe very good agreement between the norms $\lVert S_{n}'\rVert_{L^{\infty}(I)}$ and the dashed line. This demonstrates well that, in the present example, the norms $\lVert S_{n}'\rVert_{L^{\infty}(I)}$ grow as Corollary~\ref{corollary_Snk} suggests, i.e., $\lVert S_{n}'\rVert_{L^{\infty}(I)}\sim C K_{2}^{n}n^{n}$ as $n\to \infty$, for some \anton{appropriate constants} $C,K_{2}>0$. In general, however, this may not be the case. We refer to Appendix~\ref{appendix_convergent} and Section~\ref{subsection_example2} for an example, where the functions $S_{n}'$ and $S_{n}$ even decay as $n\to \infty$.

Next, we investigate numerically the behavior of the WKB approximation error $\lVert \varphi-\widetilde{\varphi}_{N}^{WKB}\rVert_{L^{\infty}(I)}$ as a function of $\varepsilon$. We may compare the results with the error ``estimate'' (\ref{wkb_error_including_approx_order}). As a first test, we set $M=8$ to compute $\widetilde{\varphi}_{N}^{WKB}$. This results in an approximation error for $S_{n}$ of $e_{n}\approx 10^{-8}$, $n=0,\dots,4$. On the left of Figure~\ref{plot:airy_wkb_error_M=8} we plot for $N=0,\dots,4$ the error as a function of $\varepsilon$: For $N=2,3,4$ and small values of $\varepsilon$, the $\mathcal{O}(\varepsilon^{-1})e_{0}$-term is dominant. In contrast, for $N=0$ and $N=1$ this error term is not visible for the given range of $\varepsilon$-values so that the $\mathcal{O}(\varepsilon^{N})$-term is dominant. As a second test, we set again $M=8$, but now use in $\widetilde{\varphi}_{N}^{WKB}$ the exactly computed function $S_{0}$. The $\mathcal{O}(\varepsilon^{-1})e_{0}$-term from (\ref{wkb_error_including_approx_order}) is thus eliminated.
On the right of Figure~\ref{plot:airy_wkb_error_M=8} we show again the error $\lVert \varphi-\widetilde{\varphi}_{N}^{WKB}\rVert_{L^{\infty}(I)}$ as a function of $\varepsilon$: For $N=2,3,4$ and small $\varepsilon$-values, the $\mathcal{O}(\varepsilon^{0})e_{1}$-term, which is the next term in the sum in (\ref{wkb_error_including_approx_order}), now dominates. Indeed, the error curves show an almost constant value of approximately $2\cdot10^{-9}$ for small values of $\varepsilon$. For larger $\varepsilon$, the error curves behave like $\mathcal{O}(\varepsilon^{N})$. As a third test, we set $M=25$ and approximate again all functions $S_{n}$, $n=0,\dots,4$ (as in the first test). The corresponding approximation errors of $S_{n}$ are $e_{n}\approx 10^{-23}$, $n=0,\dots,4$.
On the left of Figure~\ref{plot:airy_wkb_error_M=25} we present the resulting WKB approximation errors. We observe that, on this scale, all $\mathcal{O}(\varepsilon^{n-1})e_{n}$-terms in the sum of (\ref{wkb_error_including_approx_order}) are essentially eliminated, since all the shown error curves behave like $\mathcal{O}(\varepsilon^{N})$. Overall, we observe very good agreement between the numerical results of each of the three tests and the statements from Theorem~\ref{theorem_wkb_error_approximate_Sn} and Remark~\ref{remark_wkb_error_perturbed}.

\begin{figure}
	\centering
    \includegraphics[scale=0.75]{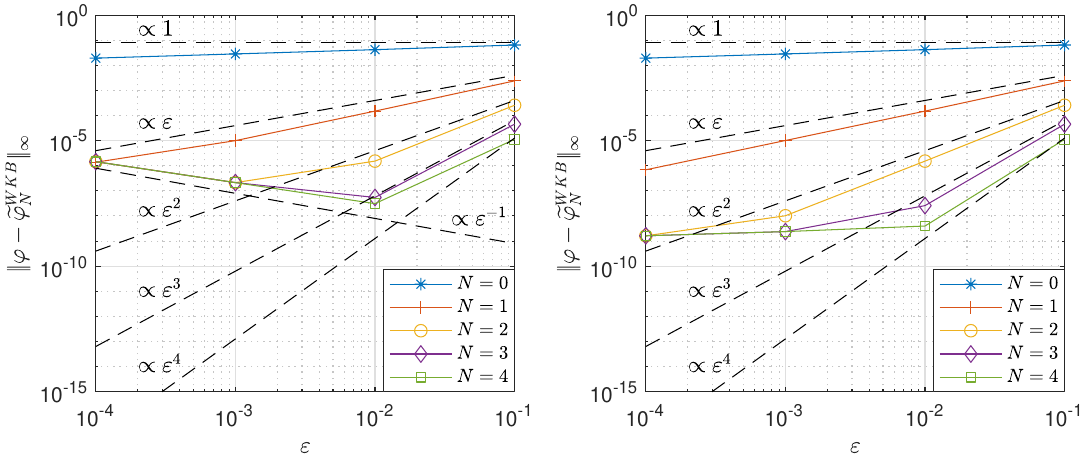}
	\caption{$L^{\infty}(I)$-norm of the error of the WKB approximation as a function of $\varepsilon$, for the IVP (\ref{eqn:Airy}) and several choices of $N$. Left: $M=8$. Right: $M=8$; using the \markus{exact} function $S_{0}$.
    }
	\label{plot:airy_wkb_error_M=8}
\end{figure}
%
%
\begin{figure}
	\centering
    \includegraphics[scale=0.75]{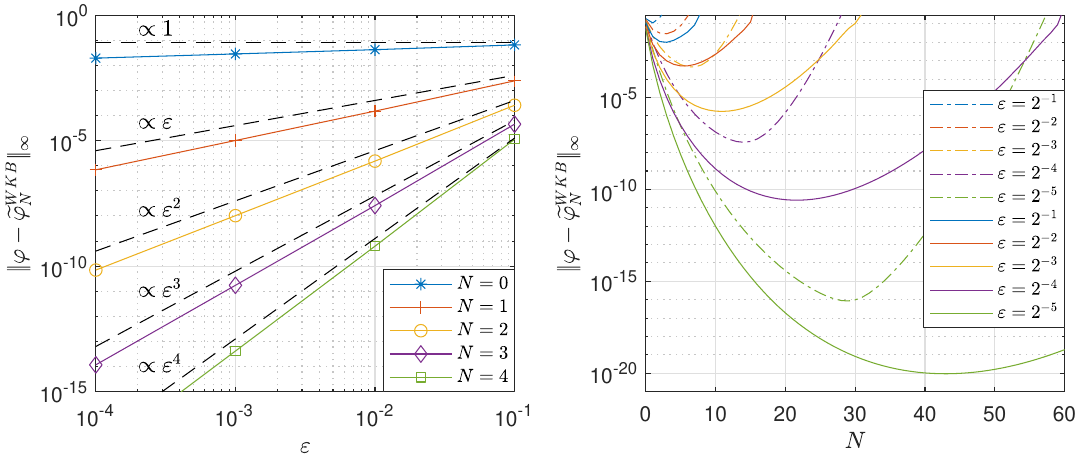}
	\caption{Left: $L^{\infty}(I)$-norm of the error of the WKB approximation as a function of $\varepsilon$ for the IVP (\ref{eqn:Airy}) and several choices of $N$.  Here, we set $M=25$. Right: $L^{\infty}(I)$-norm of the error of the WKB approximation as a function of $N$ for the IVP (\ref{eqn:Airy}) and several choices of $\varepsilon$. The dash-dotted lines correspond to the error estimate according to Theorem~\ref{theorem_wkb_error} and the solid lines correspond to the actual error of the WKB approximation. \anton{(In both plots the curves have the same top-down ordering as the legend.)} }
	\label{plot:airy_wkb_error_M=25}
\end{figure}
Next we investigate the error of the (perturbed) WKB approximation as a function of the truncation order $N$. For this, we set again $M=25$, yielding approximation errors of $S_{n}$ as $e_{n}\approx 10^{-23}$ for $n=0,1,\dots$. We may therefore neglect the errors caused by approximating the functions $S_{n}$. On the right of Figure~\ref{plot:airy_wkb_error_M=25} we plot the actual error $\lVert \varphi-\widetilde{\varphi}_{N}^{WKB}\rVert_{L^{\infty}(I)}$ and its error estimate (\ref{wkb_error}) while again using $K_{2}^{fitted}=10/37$, both as functions of $N$, for several $\varepsilon$-values. We observe that, even when using the fitted constant $K_{2}^{fitted}$, the ``optimal'' truncation order $\widehat{N}_{opt}$, as predicted by the estimate (\ref{wkb_error}), is smaller than $N_{opt}$ (determined as the $\operatorname{argmin}$ of the actual error curve). For instance, we have $\widehat{N}_{opt}(2^{-4})\approx 14<22\approx N_{opt}(2^{-4})$ and $\widehat{N}_{opt}(2^{-5})\approx 29<44\approx N_{opt}(2^{-5})$, respectively.
\anton{This is not a paradox, but it is implied in this example by the strong over-estimation (\ref{wkb_error}) of the error for large $N$.} 

In Figure~\ref{plot:airy_N_opt_opt_err} we plot on the left the optimal truncation order $N_{opt}(\varepsilon)$ as a function of $\varepsilon$ as well as its predictions $\widehat{N}_{opt}(\varepsilon)$, $N_{heu}(\varepsilon)$, and $\widehat{N}_{heu}(\varepsilon)$. The plot suggests that $N_{opt}$, $\widehat{N}_{opt}$, and $N_{heu}$ are \anton{all} proportional to $\varepsilon^{-1}$, as $\varepsilon \to 0$ (for $\widehat{N}_{heu}$ this is already evident from (\ref{hat_N_heu})). Further, for $\varepsilon=2^{-1},2^{-3},2^{-4},2^{-5}$ we observe that $N_{opt}=N_{heu}$. On the right of Figure~\ref{plot:airy_N_opt_opt_err} we plot the corresponding optimal error which is achieved by using $N_{opt}$ as well as error estimate (\ref{wkb_error}) when using $N=\widehat{N}_{opt}$, both as a function of 
\anton{$1/\varepsilon$. }
As indicated by the dashed line, the optimal error decays like 
\jannis{$\mathcal{O}(\exp(-r/\varepsilon))$, with $r\approx 1.36$}
being a fitted value, in good agreement with Remark~\ref{remark_optimal_error}.
%
%
%
\begin{figure}
	\centering
	\includegraphics[scale=0.75]{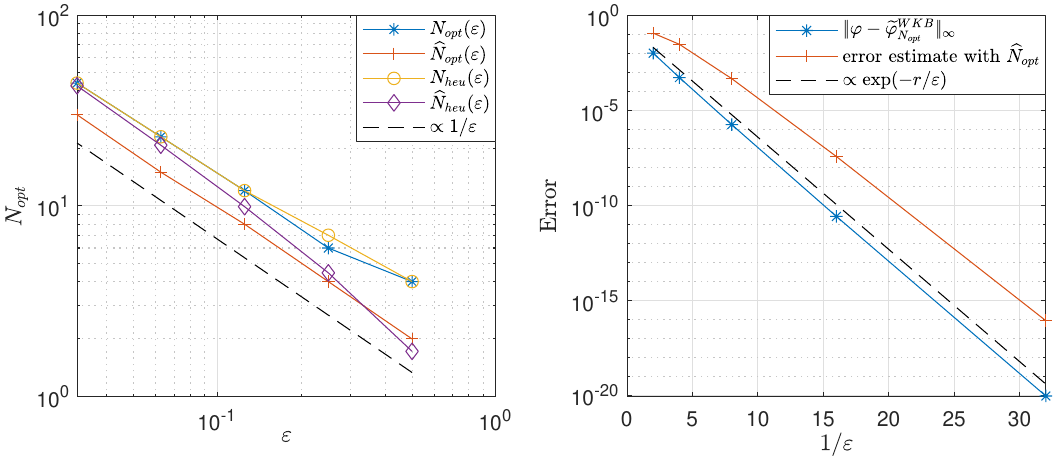}
    \caption{Left: The optimal truncation order $N_{opt}$ as well as the predicted ``optimal'' orders $\widehat{N}_{opt}$, $N_{heu}$, and $\widehat{N}_{heu}$ as functions of $\varepsilon$. The dashed line is proportional to $1/\varepsilon$. Right: The optimal error achieved by using $N_{opt}$ as well as error estimate (\ref{wkb_error}) when using $N=\widehat{N}_{opt}$, both as functions of \jannis{$1/\varepsilon$}. The dashed line is proportional \anton{to $\exp(-\frac{r}{\varepsilon})$ with $r\approx 1.36$. }}
	\label{plot:airy_N_opt_opt_err}
\end{figure}
\subsection{Example 2}
As our second example let us consider the initial value problem
\begin{align}\label{eqn:third_example}
	\begin{cases}
		\varepsilon^{2}\varphi^{\prime\prime  }(x) + \ee^{5x} \varphi(x) = 0 \Comma \quad x \in [0, 1] \Comma \\
		\varphi(0) = 1 \Comma \\
		\varepsilon\varphi^{\prime}(0) = 0 \Comma
	\end{cases}
\end{align}
where the exact solution is given by\footnote{We found the exact solution by using the Symbolic Math Toolbox of \MATLAB.}
\begin{align}\label{third_example_exact_sol}
    \varphi_{exact}(x)&=\frac{J_0\left(\frac{2}{5\varepsilon}\ee^{5x/2}\right)Y_1\left(\frac{2}{5\varepsilon}\right)-Y_0\left(\frac{2}{5\varepsilon}\ee^{5x/2}\right)J_1\left(\frac{2}{5\varepsilon}\right)}{J_0\left(\frac{2}{5\varepsilon}\right)Y_1\left(\frac{2}{5\varepsilon}\right)-J_1\left(\frac{2}{5\varepsilon}\right)Y_0\left(\frac{2}{5\varepsilon}\right)}\period
\end{align}
Here, $J_{\nu}$ and $Y_{\nu}$ denote the Bessel functions of first and second kind of order $\nu$, respectively (e.g., see \cite[Chap.\ 10]{Olver2010NISTHO}).

On the left of Figure~\ref{plot:third_example_sol_plot} the exact solution $\varphi_{exact}$ is plotted for the choice $\varepsilon=10^{-2}$. Throughout the whole interval, due to the fast growth of the function $a(x)=\exp(5x)$, the solution exhibits a rapid increase of its oscillatory behavior. 
\begin{figure}
	\centering
 \includegraphics[scale=0.75]{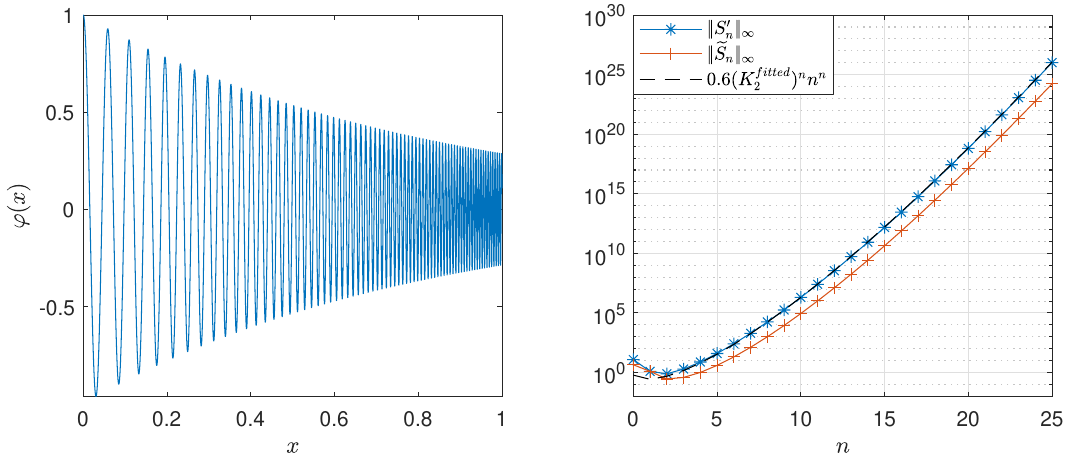}
	\caption{Left: Exact solution (\ref{third_example_exact_sol}) of IVP (\ref{eqn:third_example}) for the choice $\varepsilon=10^{-2}$. Right: $L^{\infty}(I)$-norm of $S_{n}'$ and $\widetilde{S}_{n}$ as functions of $n$ for the example $a(x)=\exp(5x)$ on the interval $I=[0,1]$.}
	\label{plot:third_example_sol_plot}
\end{figure}
Further, we plot on the right of Figure~\ref{plot:third_example_sol_plot} the $L^{\infty}(I)$-norms of the derivatives $S_{n}'$ and the approximations $\widetilde{S}_{n}$ when using $M=30$. As indicated by the dashed line, the smallest (fitted) constant $K_{2}$ such that estimate (\ref{Snk_est}) holds is $K_{2}^{fitted}\approx 9/20$ (here we also replaced $\lVert S_{0}' \rVert_{L^{\infty}(G)}$ in (\ref{Snk_est}) by the fitted value $0.6$).
In Figure~\ref{plot:third_example_err_vs_eps} we present the WKB approximation error $\lVert \varphi-\widetilde{\varphi}_{N}^{WKB}\rVert_{L^{\infty}(I)}$ as a function of $\varepsilon$ and may again compare the results with the error ``estimate'' (\ref{wkb_error_including_approx_order}). We observe that, on this scale, all $\mathcal{O}(\varepsilon^{n-1})e_{n}$-terms are essentially eliminated, since all the shown error curves behave like $\mathcal{O}(\varepsilon^{N})$. Overall, we observe very good agreement with the statements from Theorem~\ref{theorem_wkb_error_approximate_Sn} and Remark~\ref{remark_wkb_error_perturbed}. Finally, we plot in Figure~\ref{plot:third_example_N_opt} on the left the optimal truncation order $N_{opt}$ as well as its predictions $\widehat{N}_{opt}$, $N_{heu}$, and $\widehat{N}_{heu}$, as functions of $\varepsilon$. We find that $N_{opt}$ is proportional to $\varepsilon^{-1}$, as $\varepsilon\to 0$. On the right of Figure~\ref{plot:third_example_N_opt} we present the corresponding optimal error as well as error estimate (\ref{wkb_error}) when using $N=\widehat{N}_{opt}$, both as a function of 
\anton{$1/\varepsilon$. } 
As the dashed line indicates, the error decays like 
\jannis{$\mathcal{O}(\exp(-r/\varepsilon))$, with $r\approx 0.81$}
being a fitted value. This is in good agreement with Remark~\ref{remark_optimal_error}.
%
%
\begin{figure}
	\centering
    \includegraphics[scale=0.75]{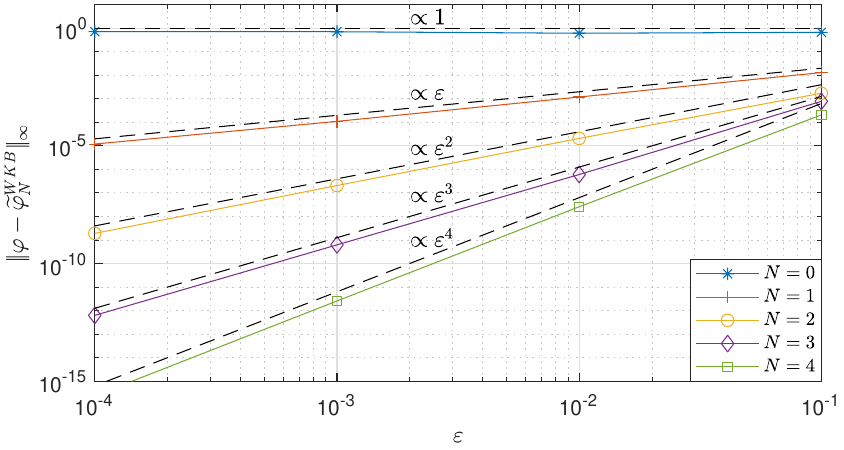}
	\caption{$L^{\infty}(I)$-norm of the error of the WKB approximation as a function of $\varepsilon$ for the IVP (\ref{eqn:third_example}) and several choices of $N$. Here, we set $M=30$. 
    }
	\label{plot:third_example_err_vs_eps}
\end{figure}
\begin{figure}
	\centering
    \includegraphics[scale=0.75]{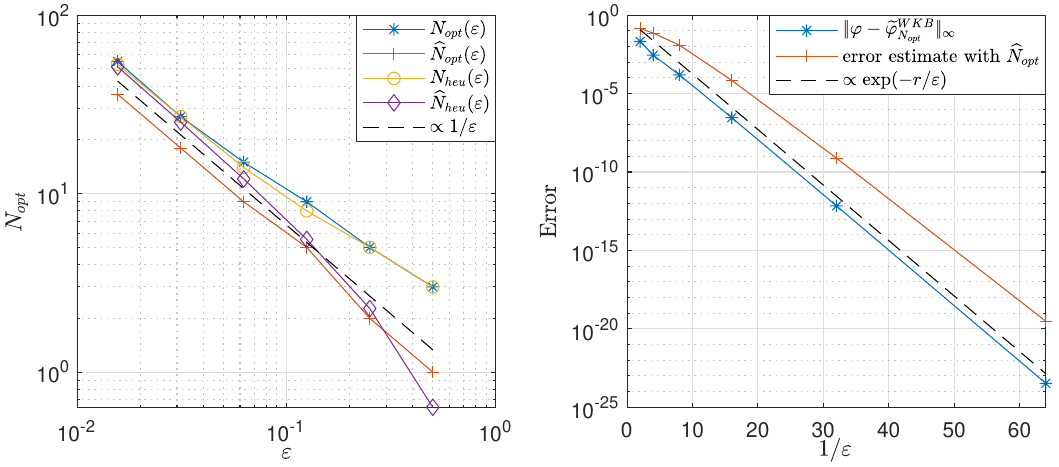}
	\caption{Left: The optimal truncation order $N_{opt}$ as well as the predicted ``optimal'' orders $\widehat{N}_{opt}$, $N_{heu}$, and $\widehat{N}_{heu}$ as functions of $\varepsilon$. The dashed line is proportional to $1/\varepsilon$. Right: The optimal error achieved by using $N_{opt}$ as well as error estimate (\ref{wkb_error}) when using $N=\widehat{N}_{opt}$, both as functions of 
    \anton{$1/\varepsilon$. The dashed line is proportional to $\exp(-\frac{r}{\varepsilon})$ with $r\approx0.81$.} }
	\label{plot:third_example_N_opt}
\end{figure}
\subsection{Example 3: Convergent WKB approximation}\label{subsection_example2}
As a final example, let us consider $a(x)=(1+x+x^{2})^{-2}$. We are interested in investigating the initial value problem
\begin{align}\label{eqn:second_example}
	\begin{cases}
		\varepsilon^{2}\varphi^{\prime\prime  }(x) + (1+x+x^{2})^{-2} \varphi(x) = 0 \Comma \quad x \in [0, 1] \Comma \\
		\varphi(0) = 1 \Comma \\
		\varepsilon\varphi^{\prime}(0) = 1 \Comma
	\end{cases}
\end{align}
where the exact solution $\varphi_{exact}$ is given by\footnote{We found the exact solution by using the Symbolic Math Toolbox of \MATLAB.}
\begin{align}\label{second_example_exact_sol}
    \varphi_{exact}(x)&= \frac{a(x)^{-1/4}}{\sqrt{3}\gamma(\varepsilon)}\sin\left(\gamma(\varepsilon)\left(\arctan\left(\frac{2x+1}{\sqrt{3}}\right)-\frac{\pi}{6}\right)\right)\nonumber\\
    &\quad-a(x)^{-1/4}\cos\left(\gamma(\varepsilon)\left(\arctan\left(\frac{2x+1}{\sqrt{3}}\right)-\frac{\pi}{6}\right)\right)\Comma
\end{align}
%
where $\gamma(\varepsilon):=\sqrt{3\varepsilon^{2}+4}/(\sqrt{3}\varepsilon)$.

This example is special in the sense that $a(x)=(1+x+x^{2})^{-2}$ belongs to the class of functions represented as $(C_{1}+C_{2}x+C_{3}x^{2})^{-2}$, with constants $C_{i}$, $i=1,2,3$, satisfying $|C_{2}|+|C_{3}|>0$ and $C_{2}^{2}\neq 4C_{1}C_{3}$. Further details regarding this class of functions are discussed in Appendix~\ref{appendix_convergent}, particularly with regard to the corresponding WKB series. Notably, for such functions it holds that $S_{1}' \not\equiv 0$, $S_{2}'\not\equiv 0$ and $S_{3}'\equiv 0$, see Remark~\ref{remark_a_C1C2C3}. Moreover, according to Proposition~\ref{proposition_catalan} and Remark~\ref{remark_catalan}, it follows that
\begin{align}
    S_{2n}'&=\mathcal{O}\left((n-1)^{-3/2}|C_{1}C_{3}-C_{2}^{2}/4|^{n-1}\right)\Comma \quad n\to \infty\Comma\\
    S_{2n+1}'&\equiv 0\Comma \quad n\geq 1\period
\end{align}
Consequently, this implies that the underlying asymptotic series (\ref{wkb_S}) is (geometrically) convergent for any $\varepsilon \leq |C_{1}C_{3}-C_{2}^{2}/4|^{-1/2}$, see again Remark~\ref{remark_catalan}. In this case, given that $|C_{1}C_{3}-C_{2}^{2}/4|=3/4$, the functions $S_{2n}'$ (and hence $S_{2n}$) exhibit exponential decay as $n\to\infty$, uniformly in $x\in I$. The corresponding WKB series is convergent for any $\varepsilon\in (0,2/\sqrt{3}]$. 

%
%
\begin{figure}
	\centering
    \includegraphics[scale=0.75]{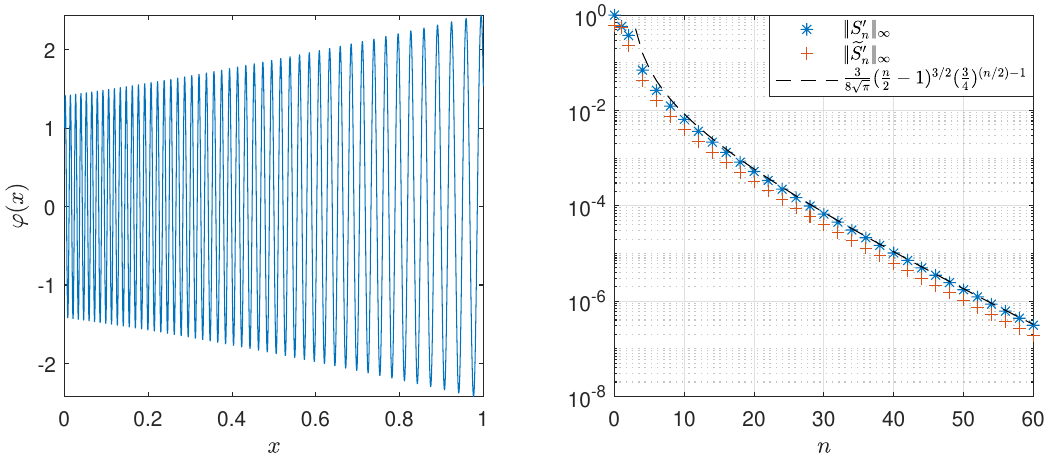}
 \caption{Left: Exact solution (\ref{second_example_exact_sol}) of IVP (\ref{eqn:second_example}) for the choice $\varepsilon=2^{-9}$. Right: $L^{\infty}(I)$-norm of $S_{n}'$ and $\widetilde{S}_{n}$ as functions of even $n$, for the example $a(x)=(1+x+x^{2})^{-2}$ on the interval $I=[0,1]$. The dashed line is proportional to the r.h.s.\ of (\ref{dS2n_asymptotics}) with $C_{1}=C_{2}=C_{3}=1$.}
	\label{plot:second_example_sol_plot}
\end{figure}
In Figure~\ref{plot:second_example_sol_plot} on the left we plot $\varphi_{exact}$ for the choice $\varepsilon=2^{-9}$.
%
%
Moreover, on the right of Figure~\ref{plot:second_example_sol_plot} we plot the $L^{\infty}(I)$-norm of $S_{n}'$ and $\widetilde{S}_{n}$, both as a function of $n$. Here, we set $M=30$ for the numerical integration of the functions $S_{n}^{\prime}$. We observe that the norms indeed decay exponentially, in agreement with Remark~\ref{remark_catalan}. Here, the dashed line is precisely given by the r.h.s.\ of (\ref{dS2n_asymptotics}) with $C_{1}=C_{2}=C_{3}=1$.
\begin{figure}
	\centering
    \includegraphics[scale=0.75]{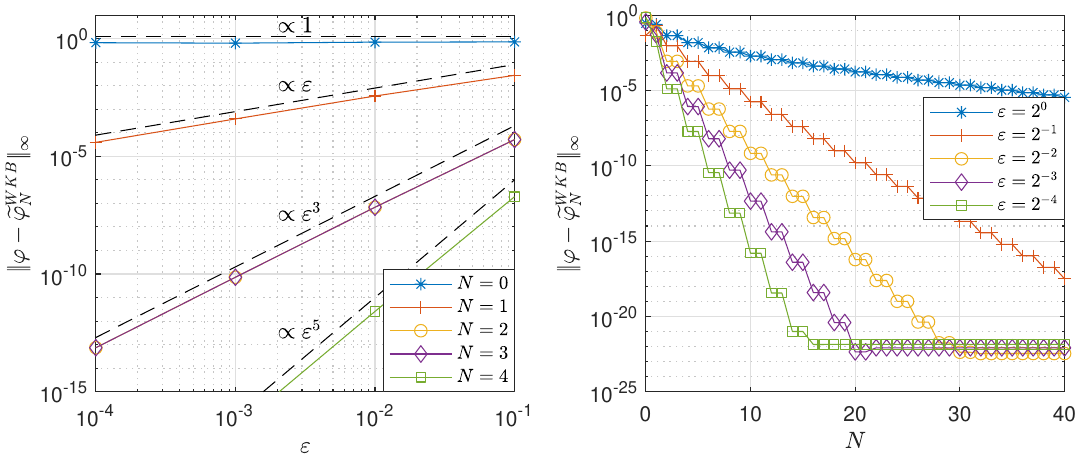}
	\caption{Left: $L^{\infty}(I)$-norm of the error of the WKB approximation as a function of $\varepsilon$, for the IVP (\ref{eqn:second_example}) and several choices of $N$. Here, we set $M=30$. The yellow curve for $N=2$ is the same as for $N=3$ and hence not visible in the shown plot. Right: $L^{\infty}(I)$-norm of the error of the WKB approximation as a function of $N$, for the IVP (\ref{eqn:second_example}) and several choices of $\varepsilon$. Here, we set $M=30$.
    }
	\label{plot:second_example_err_vs_eps}
\end{figure}
In Figure~\ref{plot:second_example_err_vs_eps} on the left we plot for $N=0,\dots,4$ the error of the WKB approximation $\lVert \varphi-\widetilde{\varphi}_{N}^{WKB}\rVert_{L^{\infty}(I)}$ as a function of $\varepsilon$. By comparing the results with (\ref{wkb_error_including_approx_order}), we observe that all $\mathcal{O}(\varepsilon^{n-1})e_{n}$-terms are essentially eliminated. Further, the error curves for $N=0,1,3$ behave like $\mathcal{O}(\varepsilon^{N})$ whereas the curves corresponding to the choices $N=2,4$ behave like $\mathcal{O}(\varepsilon^{N+1})$. This is because the given function $a$ implies $S_{2n+1}'\equiv 0$, for any $n\geq 1$, which means $\varphi_{N}^{WKB}=\varphi_{N+1}^{WKB}$ for any even $N\geq 2$.

Finally, on the right of Figure~\ref{plot:second_example_err_vs_eps} we plot the error $\lVert \varphi-\widetilde{\varphi}_{N}^{WKB}\rVert_{L^{\infty}(I)}$ as a function of the truncation order $N$, for several $\varepsilon$-values. We observe that all shown error curves are decreasing functions in $N$, up to the point where they reach values of approximately $10^{-22}$. This is due to the approximation of the functions $S_{n}$. More precisely, the first term of the sum in (\ref{wkb_error_including_approx_order}), namely, the $\mathcal{O}(\varepsilon^{-1})e_{0}$-term  corresponding to the approximation of $S_{0}$, becomes dominant at this point. For this reason, the minimum achievable error level is growing with decreasing $\varepsilon$. Besides from this saturation effect, the plot aligns well with Remark~\ref{remark_catalan}, suggesting that the WKB approximation converges to the exact solution of IVP (\ref{eqn:second_example}) as $N\to\infty$, for all displayed $\varepsilon$-values. Furthermore, one can observe again the fact that $\varphi_{N}^{WKB}=\varphi_{N+1}^{WKB}$ for even $N\geq 2$, as indicated by the step-like behavior of all shown error curves.
%
%
%
\section{Conclusion}\label{section_conclusion}
In the present paper we analyzed the WKB approximation of the solution to a highly oscillatory initial value problem. Assuming that the potential in the equation is analytic, we found explicit upper bounds for the terms occurring in the asymptotic WKB series of the approximate solution. Building on that, we proved error estimates which are explicit not only w.r.t.\ the small parameter $\varepsilon$ but also w.r.t.\ $N$, the chosen number of terms in the truncated asymptotic series. We showed that the optimal truncation order $N_{opt}$ is proportional to $\varepsilon^{-1}$, and this results in an approximation error that is exponentially small w.r.t.\ $\varepsilon$. We confirmed our theoretical results by several numerical experiments.
\section*{Declaration of competing interest}
The authors declare that they have no known competing financial interests or personal relationships that could have appeared to influence the work reported in this paper.
\section*{Acknowledgements}
The authors J. Körner, A. Arnold, and J. M. Melenk acknowledge support by the Austrian Science Fund (FWF) project \href{https://doi.org/10.55776/F65}{10.55776/F65}, and the authors A. Arnold and C. Klein also by the bi-national FWF-project I3538-N32. C. Klein thanks for support by  the ANR-17-EURE-0002 EIPHI and by the 
European Union Horizon 2020 research and innovation program under the 
Marie Sklodowska-Curie RISE 2017 grant agreement no. 778010 IPaDEGAN. 
\appendix
\section{Convergent WKB series}\label{appendix_convergent}
In this appendix we provide examples where the asymptotic series (\ref{wkb_S}) is convergent in $L^{\infty}(I)$.
In practice, the norms $\lVert S_{n}'\rVert_{L^{\infty}(I)}$ (and $\lVert S_{n}\rVert_{L^{\infty}(I)}$) often decrease up to a certain number of $n$ before they start to increase rapidly, e.g., see the right plot of Figure~\ref{plot:airy_sol_plot}. However, there are examples where one can easily verify that this is not the case. For instance, consider the simplest case in which $a\equiv a_{0}$ is constant. By (\ref{S1}) this is equivalent to $S_{1}^{\prime}\equiv 0$, which by (\ref{Sn}) then implies $S_{n}^{\prime}\equiv 0$ for every $n\geq 1$. Similarly, one easily verifies that $S_{2}'\equiv 0$ is equivalent to $a$ having the form $a(x)=(C_{1}+C_{2}x)^{-4}$ for some constants $C_{1}$ and $C_{2}$, see also \cite[Problem 10.2]{Bender1999AdvancedMM}. It then holds $S_{n}^{\prime}\equiv 0$ for every $n\geq 2$. Thus, in both of the just mentioned cases, the asymptotic series (\ref{wkb_S}) terminates automatically and is therefore convergent. The corresponding WKB approximation (\ref{general_wkb_solution}) with $N\geq 0$ (respectively $N\geq 1$) is then the exact solution to IVP (\ref{schroedinger_eq}). Indeed, revisiting (\ref{residual_f_eps}), it is clear that the r.h.s.\ in (\ref{wkb_error_1}) then vanishes, i.e.\ $\lVert \varphi-\varphi_{N}^{WKB}\rVert_{L^{\infty}(I)}=0$.

In the subsequent discussion, we will give examples of convergent WKB series which do not terminate automatically.
\begin{Proposition}\label{proposition_catalan}
	Let $S_{3}^{\prime}\equiv 0$. Then it holds
	\begin{align}
		S_{2n}^{\prime}&=S_{2}^{\prime}\left(-\frac{S_{2}'}{2S_{0}^{\prime}}\right)^{n-1}a_{n}\Comma\label{S_n_even}\\
		S_{2n+1}^{\prime}&\equiv 0\Comma\label{S_n_odd}
	\end{align}
	for $n\geq 2$. Here, the sequence $a_{n}$ is recursively defined by $a_{1}:=1$ and
	\begin{align}
		a_{n+1}:=\sum_{j=1}^{n}a_{j}a_{n+1-j}\Comma\quad n\geq 1\period
	\end{align}
\end{Proposition}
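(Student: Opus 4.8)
The plan is to first convert the hypothesis $S_3'\equiv 0$ into a pointwise differential identity and to extract from it the single structural fact that drives everything, namely that the quotient $r:=-S_2'/(2S_0')$ is \emph{constant} on $I$. Evaluating the recurrence (\ref{Sn}) at $n=3$ gives $S_3'=-\tfrac{1}{2S_0'}\bigl(2S_1'S_2'+S_2''\bigr)$, so $S_3'\equiv 0$ is equivalent to $S_2''=-2S_1'S_2'$. Recalling $S_1'=-S_0''/(2S_0')$ from (\ref{S1}), i.e.\ $S_0''=-2S_1'S_0'$, a direct differentiation of $r$ yields
\begin{align}
r'=-\frac{S_2''}{2S_0'}+\frac{S_2'S_0''}{2(S_0')^2}=\frac{S_1'S_2'}{S_0'}-\frac{S_1'S_2'}{S_0'}=0\period
\end{align}
Thus $r$ is constant, and I would record the two identities $(S_2')'=-2S_1'S_2'$ and $r'=0$ for repeated use below (division by $S_0'$ being legitimate since $a\ge a_0>0$).

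Next I would prove both assertions simultaneously by strong induction on $n$, in the combined form $P(n)$: ``$S_{2n}'=S_2'r^{n-1}a_n$ and $S_{2n+1}'\equiv 0$''. The base case $P(1)$ is immediate: $S_2'=S_2'r^0a_1$ since $a_1=1$, and $S_3'\equiv 0$ is the hypothesis. For the even part of the inductive step ($n\ge 2$), I would apply (\ref{Sn}) at index $2n$. Here $S_{2n-1}'=S_{2(n-1)+1}'\equiv 0$ by $P(n-1)$, so $S_{2n-1}''\equiv 0$, and in the convolution $\sum_{j=1}^{2n-1}S_j'S_{2n-j}'$ every summand with an odd-odd index pair vanishes (the only odd index carrying a nonzero derivative is $1$, and $j=1$ forces $2n-j=2n-1\ge 3$). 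The surviving even-even pairs give, by the induction hypothesis,
\begin{align}
\sum_{p=1}^{n-1}S_{2p}'S_{2(n-p)}'=(S_2')^2r^{n-2}\sum_{p=1}^{n-1}a_pa_{n-p}=(S_2')^2r^{n-2}a_n\Comma
\end{align}
where the last equality is exactly the Catalan recurrence $a_n=\sum_{p=1}^{n-1}a_pa_{n-p}$. Dividing by $-2S_0'$ and using $-S_2'/(2S_0')=r$ turns this into $S_{2n}'=S_2'r^{n-1}a_n$, as claimed.

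For the odd part I would apply (\ref{Sn}) at index $2n+1$. In the convolution $\sum_{j=1}^{2n}S_j'S_{2n+1-j}'$ each pair has opposite parity, so a nonzero term requires its odd member to equal $1$; this occurs only at $j=1$ and $j=2n$, leaving exactly $2S_1'S_{2n}'$. Hence $S_{2n+1}'=-\tfrac{1}{2S_0'}\bigl(2S_1'S_{2n}'+S_{2n}''\bigr)$, and it remains to show the bracket vanishes. Here the constancy of $r$ is decisive: since $S_{2n}'=a_nr^{n-1}S_2'$ with $a_n,r$ constant, I get $S_{2n}''=(S_{2n}')'=a_nr^{n-1}(S_2')'=-2S_1'\,a_nr^{n-1}S_2'=-2S_1'S_{2n}'$, so the bracket equals $2S_1'S_{2n}'-2S_1'S_{2n}'=0$ and $S_{2n+1}'\equiv 0$. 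This closes the induction.

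The main obstacle is precisely the genuinely nonzero term $S_{2n}''$ in the odd-index recurrence: unlike the convolution sum, it does not vanish for parity reasons, and only the hypothesis-driven fact $r'=0$ makes it cancel against $2S_1'S_{2n}'$. Identifying and proving this constancy is therefore the heart of the argument; once it is in hand, both the even-index formula and the vanishing of the odd-index derivatives follow by routine bookkeeping of parities in the convolution sums together with the Catalan recurrence.
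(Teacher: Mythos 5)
Your proof is correct. Its skeleton --- induction on $n$, parity bookkeeping in the convolution sums, and the Catalan recurrence for the even-index terms --- coincides with the paper's; the genuine difference lies in how the second-derivative term in the odd-index step is eliminated. The paper differentiates the product formula for $S_{2n+2}'$ explicitly (using $S_0''/S_0'=-2S_1'$), combines the result with $2S_1'S_{2n+2}'$, and recognizes the bracket as a multiple of $S_3'$, arriving at the explicit identity $S_{2n+3}'=(n+1)\bigl(-S_2'/(2S_0')\bigr)^{n}a_{n+1}S_3'\equiv 0$. You instead extract up front the structural fact that $r:=-S_2'/(2S_0')$ is constant; in fact your computation establishes the clean general identity $r'=S_3'$, valid with no hypothesis at all (it follows from (\ref{S1}) and (\ref{Sn}) alone), so constancy of $r$ is exactly equivalent to the assumption $S_3'\equiv 0$. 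Once $r$ and $a_n$ are known to be constants, $S_{2n}''=a_nr^{n-1}S_2''=-2S_1'S_{2n}'$ is immediate and the odd-index step becomes computation-free. The two arguments are thus the same cancellation in different clothing: what the paper verifies inline by a product-rule calculation, you package as a one-line lemma, which makes the induction cleaner and isolates where the hypothesis is used; what the paper's version buys in exchange is the more informative closed form for $S_{2n+3}'$, which exhibits the failure of (\ref{S_n_odd}) as being directly proportional to $S_3'$.
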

\begin{proof}
	It is easy to check, that (\ref{S_n_even}) and (\ref{S_n_odd}) hold for $n=2$. We proceed now by induction on $n$. To this end, assume that formulas (\ref{S_n_even}) and (\ref{S_n_odd}) hold for all $2\leq k\leq n$ for some fixed $n\geq 2$. We shall now prove them for $n+1$. The induction hypothesis implies that $S_{2n+1}^{\prime\prime}\equiv 0$ as well as $S_{j}^{\prime}\equiv 0$ for all odd indices $j$ such that $1\leq j\leq 2n+1$. Hence,
	\begin{align}
		S_{2n+2}^{\prime}&=-\frac{1}{2S_{0}^{\prime}}\left(\sum_{j=1}^{2n+1}S_{j}^{\prime}S_{2n+2-j}^{\prime}+S_{2n+1}^{\prime\prime}\right)\nonumber\\
		&=-\frac{1}{2S_{0}^{\prime}}\sum_{j=1}^{n}S_{2j}^{\prime}S_{2(n+1-j)}^{\prime}\nonumber\\
		&=S_{2}^{\prime}\left(-\frac{S_{2}'}{2S_{0}^{\prime}}\right)^{n}\sum_{j=1}^{n}a_{j}a_{n+1-j}\nonumber\\
		&=S_{2}^{\prime}\left(-\frac{S_{2}'}{2S_{0}^{\prime}}\right)^{n}a_{n+1}\Comma\label{S_n_odd_proof}
	\end{align}
	where we have again used the induction hypothesis in the third equation. Differentiating (\ref{S_n_odd_proof}) and using $\frac{S_{0}^{\prime\prime}}{S_{0}^{\prime}}=-2S_{1}^{\prime}$ we further obtain
	\begin{align}
		S_{2n+2}^{\prime\prime}=\left(-\frac{S_{2}'}{2S_{0}^{\prime}}\right)^{n}\Big(2nS_{1}^{\prime}S_{2}^{\prime}+(n+1)S_{2}^{\prime\prime}\Big)a_{n+1}\period
	\end{align}
	Moreover, the induction hypothesis implies $S_{j}^{\prime}S_{2n+3-j}^{\prime}\equiv 0$ for $2\leq j\leq 2n+1$ since either $j$ or $2n+3-j$ is odd. Therefore, we get
	\begin{align}
		S_{2n+3}^{\prime}&=-\frac{1}{2S_{0}^{\prime}}\left(\sum_{j=1}^{2n+2}S_{j}^{\prime}S_{2n+3-j}^{\prime}+S_{2n+2}^{\prime\prime}\right)\nonumber\\
		&=-\frac{1}{2S_{0}^{\prime}}\left(2S_{1}'S_{2n+2}'+S_{2n+2}^{\prime\prime}\right)\nonumber\\
		&=\left(-\frac{S_{2}'}{2S_{0}^{\prime}}\right)^{n}\left(-\frac{1}{2S_{0}'}\right)\Big(2(n+1)S_{1}'S_{2}'+(n+1)S_{2}''\Big)a_{n+1}\nonumber\\
		&=\left(-\frac{S_{2}'}{2S_{0}^{\prime}}\right)^{n}(n+1)S_{3}^{\prime}a_{n+1}\nonumber\\
		&\equiv0\Comma
	\end{align}
by assumption on $S_{3}'$. This concludes the proof.
\end{proof}
\begin{Remark}\label{remark_a_C1C2C3}
	Proposition~\ref{proposition_catalan} assumes $S_{3}^{\prime}\equiv 0$, which is equivalent to $a(x)$ satisfying the third order nonlinear ODE $15a'^3 + 4a^2a^{\prime\prime\prime} - 18aa'a''=0$. With the aid of \MATLAB's Symbolic Math Toolbox we find that the general solution to this ODE is given by $a(x)=(C_{1}+C_{2}x+C_{3}x^2)^{-2}$, where $C_{1},C_{2}$ and $C_{3}$ are constants. A simple computation then shows that if $|C_{2}|+|C_{3}|>0$ and $C_{2}^{2}\neq 4C_{1}C_{3}$, the coefficient function $a$ does not have one of the two forms mentioned before Proposition~\ref{proposition_catalan}, i.e., $S_{1}'\not\equiv 0$ and $S_{2}'\not\equiv 0$. Thus, due to Proposition~\ref{proposition_catalan}, the corresponding WKB series does not terminate in this case.
\end{Remark}
\begin{Remark}\label{remark_catalan}
	The numbers $a_{n}=:c_{n-1}$ in Proposition~\ref{proposition_catalan} are the so-called Catalan numbers (e.g., see \cite{Graham1991ConcreteM}), which are known to grow asymptotically as $c_{n}\sim\frac{4^{n}}{n^{3/2}\sqrt{\pi}}$, for $n\to\infty$. Let us assume that $a(x)=(C_{1}+C_{2}x+C_{3}x^2)^{-2}$ such that $S_{3}'\equiv 0$, see Remark~\ref{remark_a_C1C2C3}. We then have $S_{2}'(x)/S_{0}'(x)=C_{1}C_{3}/2 - C_{2}^2/8$. According to (\ref{S_n_even}), we thus have for $n\geq 2$
    \begin{align}\label{dS2n_asymptotics}
        \lVert S_{2n}^{\prime}\rVert_{L^{\infty}(I)}&\leq\lVert S_{2}^{\prime}\rVert_{L^{\infty}(I)}\left\lVert\frac{S_{2}^{\prime}}{2S_{0}^{\prime}}\right\rVert_{L^{\infty}(I)}^{n-1}c_{n-1}\nonumber\\
        &\sim\frac{\lVert S_{2}^{\prime}\rVert_{L^{\infty}(I)}}{(n-1)^{3/2}\sqrt{\pi}}\left\lVert\frac{2S_{2}^{\prime}}{S_{0}^{\prime}}\right\rVert_{L^{\infty}(I)}^{n-1}\Comma\quad n\to\infty\nonumber\\
		&=\frac{\lVert S_{2}^{\prime}\rVert_{L^{\infty}(I)}}{(n-1)^{3/2}\sqrt{\pi}}\left|C_{1}C_{3}-\frac{C_{2}^{2}}{4}\right|^{n-1}\period
    \end{align}
    By definition (\ref{Sn_definition}), we conclude that
    \begin{align}\label{S_2n_asymptotic}
        \lVert S_{2n}\rVert_{L^{\infty}(I)}=\mathcal{O}\left((n-1)^{-3/2}|C_{1}C_{3}-C_{2}^{2}/4|^{n-1}\right)\Comma \quad n\to \infty\period
    \end{align}
    Thus, the constants $C_{i}$, $i=1,2,3$, determine whether the function $\lVert S_{2n}\rVert_{L^{\infty}(I)}$ is exponentially growing or decaying as $n\to\infty$. Note that Proposition~\ref{proposition_catalan} also implies that $S_{2n+1}\equiv 0$ for $n\geq 1$. A short calculation then shows that (\ref{S_2n_asymptotic}) implies that the corresponding WKB series $\exp(\sum_{n=0}^{\infty}\varepsilon^{n-1}S_{n}(x))$ is (geometrically) convergent for any $x\in I$, if $\varepsilon \leq |C_{1}C_{3}-C_{2}^{2}/4|^{-1/2}$.
\end{Remark}
%
	
   

\bibliographystyle{abbrv}
\bibliography{references}


\end{document}